\newcommand{\N}{\mathbb{N}}
\newcommand{\R}{\mathbb{R}}
\newcommand{\de}{\mathop{}\!\mathrm{d}}
\DeclareMathOperator{\supp}{supp}
\DeclareMathOperator{\Lap}{\Delta}
\DeclareMathOperator{\fish}{\mathcal{I}}
\DeclareMathOperator{\dom}{dom}
\DeclareMathOperator{\Tr}{Tr}
\DeclareMathOperator*{\argmin}{arg\,min}
\DeclareMathOperator*{\argmax}{arg\,max}
\newcommand{\sensv}{\partial S[\hat{q}]}
\newcommand{\sens}[1]{\partial_{#1} S[\hat{q}]}
\newcommand{\mnorm}[1]{\|#1\|_{M(\Om)}}
\newcommand{\tr}[1]{tr(C(1)^{-1})}
\newcommand{\optom}{\bar{\omega}}
\newcommand{\optomb}{\bar{\omega}_{\beta}}
\newcommand{\optombh}{\bar{\omega}_{\beta,h}}
\newcommand{\summ}[2]{\sum_{#2=1}^{#1}}
\newcommand{\maxnorm}[1]{\|#1\|_{C(\Om)}}
\newcommand{\Heins}{H_0^1(\Omega)}
\newcommand{\eps}{\varepsilon}
\newcommand{\om}{\omega}
\newcommand{\Om}{\Omega_o}
\newcommand{\vertiii}[1]{{\left\vert\kern-0.25ex\left\vert\kern-0.25ex\left\vert #1
    \right\vert\kern-0.25ex\right\vert\kern-0.25ex\right\vert}}
\newcommand{\pair}[1]{\langle #1 \rangle}
\newcommand{\norm}[1]{\lVert#1\rVert}
\newcommand*{\omparam}{\boldsymbol{\omega}}
\newcommand*{\changed}[1]{{\color{black} #1}}
\numberwithin{equation}{section}
\theoremstyle{definition}
\newtheorem{assumption}{Assumption}
\def\namedlabel#1#2{\begingroup
    #2%
    \def\@currentlabel{#2}%
    \phantomsection\label{#1}\endgroup
}
\begin{document}

\title{%
  A sparse control approach to optimal sensor placement in PDE-constrained parameter estimation problems
}

\titlerunning{\changed{Sparse control for optimal sensor placement}}        

\author{%
  Ira Neitzel
  \and Konstantin Pieper
  \thanks{
    K. Pieper acknowledges funding by the US
    Department of Energy Office of Science grant DE-SC0016591 and by the US
    Air Force Office of Scientific Research grant FA9550-15-1-0001.
    }
  \and Boris~Vexler
  \and Daniel Walter
  \thanks{
    D. Walter acknowledges support by the DFG through the International Research
    Training Group IGDK 1754 ``Optimization and Numerical Analysis for Partial Differential
    Equations with Nonsmooth Structures''. Furthermore, support from the TopMath Graduate
    Center of TUM Graduate School at Technische Universit{\"a}t M{\"u}nchen, Germany and
    from the TopMath Program at the Elite Network of Bavaria is gratefully acknowledged.
  }
}


\institute{
  Ira Neitzel\at
  Institut f{\"u}r Numerische Simulation, Universit{\"a}t Bonn, 
  Wegelerstr. 6, 
  53115 Bonn, 
  Germany \\
  \email{neitzel@ins.uni-bonn.de}           
  \and
  Konstantin Pieper \at
  Dept. Scientific Computing,
  Florida State University,
  400 Dirac Science Library,
  Tallahassee, FL 32306, USA  
  \\
  \email{kpieper@fsu.edu}   
  \and 
  Boris Vexler 
  \and
  Daniel Walter \at
  Center for Mathematical Sciences, Chair M17,
  Technische Universit{\"a}t M{\"u}nchen,
  Boltzmannstr. 3,
  85748 Garching bei M{\"u}nchen, Germany  \\
  \email{vexler@ma.tum.de, walter@ma.tum.de} \\
  Tel.: +49-89-289-18340
}


\date{Received: date / Accepted: date}

\maketitle

\begin{abstract}
  We present a systematic approach to the optimal placement of finitely many sensors in
  order to infer a finite-dimensional parameter from point evaluations of the solution of
  an associated parameter-dependent elliptic PDE. The quality of the corresponding least
  squares estimator is quantified by properties of the asymptotic covariance
  matrix depending on the distribution of the measurement sensors. We formulate a design
  problem where we minimize functionals related to the
  size of the corresponding confidence regions with respect to the position and number
  of pointwise measurements. The measurement setup is modeled by a positive Borel measure
  on the spatial experimental domain resulting in a convex optimization problem.
  For the algorithmic solution a class of accelerated conditional gradient methods in
  measure space is derived, which exploits the structural properties of the design problem to ensure convergence
  towards sparse solutions. Convergence properties are presented 
  and the presented results are illustrated by numerical experiments.
\keywords{ PDE-constrained inverse problems \and Optimal sensor placement \and Sparsity \and Conditional gradient methods }
 \subclass{35R30 \and 49K20 \and 62K05 \and 65K05 \and 49M25 }
\end{abstract}

\section{Introduction}
In this paper we propose a measure-valued formulation for the optimal design of a
measurement setup for the identification of an unknown parameter vector entering a
system of partial differential equations. Many applications in physics, medicine  or
chemical engineering rely on complex mathematical models as a surrogate for
real-life processes. Typically the arising equations contain unknown (material) parameters
which have to be identified in order to obtain a realistic model for the simulation of the
underlying phenomenon. To illustrate the ideas, we consider a similar example as presented in
\cite{becker2005parameter}. Here, the combustion process of a single substance on a two
dimensional domain $\Omega$ is modeled by a non-linear convection-diffusion equation with
an Arrhenius-type reaction term, depending on four scalar parameters $D$, $E$, $d$, and $c$, representing
its material properties:
\begin{equation}\label{Combustion}
\begin{aligned}
 -\Lap y+\alpha\cdot \nabla y+D \operatorname{exp}\left\{-E/(d-y)\right\}y(c-y)&=0\quad \text{in}~\Omega,
\end{aligned}
\end{equation}
together with $y=\hat{y}$ on an inflow boundary $\Gamma_{\mathrm{in}}\subset \partial \Omega$ and $\partial_n y=0$ on $\partial \Omega \setminus \Gamma_{\mathrm{in}}$.
While $c$ and $d$ are known physical constants, the pre-exponential factor $D$ and the
activation energy $E$ are empirical and cannot be measured directly.
{\color{black}
Therefore one often has to rely on experimental data, for instance measurements of
the mole fraction \(y\).
}
An estimate for the true parameters is then obtained by finding a parameter vector
matching the collected data, which leads to a least-squares problem constrained by a partial
differential equation. However, due to errors in the measurement process the obtained
estimate is biased and could be far from the value which describes the physical process
most accurately.
This bias has to be quantified and the measurement procedure has to be adapted to mitigate the influence of
the  perturbed data.

In this manuscript, we consider a general PDE-model based on a parameter-dependent weak
formulation with an unknown parameter vector \(q\) in an admissible set
\(Q_{ad} \subset \R^d\) (for instance, \(q = (D,E) \in \R^2\) for~\eqref{Combustion}).
We refer to Section~\ref{sec:paraest} for the precise assumptions.
The parameter is estimated from point-wise observations of the solution \(y = S[q]\) of the
PDE-model at points \(\set{x_j}_j^m \subset \Om\), where \(\Om \subset \bar\Omega\) is a
closed set covering the possible observation locations.
We choose optimal designs according to criteria based on a
linearization of the model equation. To this purpose, we define the associated
sensitivities $\{\partial_k S[\hat{q}]\}^n_{k=1}$ of \(S[\hat{q}]\) with respect to
pertubations of each parameter \(q_k\), \(k = 1,\ldots,n\) at an initial guess $\hat{q}\in Q_{ad}$, stemming
either from prior knowledge or obtained from previous experiments.
We note that optimal design approaches based on first-order approximations have been studied for and
successfully applied to ordinary differential equations~\cite{avery2012experimental},
differential-algebraic equations~\cite{bauer2000numerical}, and also partial differential
equations~\cite{herzog2015sequentially}.
To each measurement location $x_j$ we assign a positive scalar $\lambda_j$ which is proportional to the
quality of the sensor at this location (or, alternatively corresponds to the number of
repeated measurements performed with an identical sensor).
Associated to the measurement setup is the design measure
\begin{equation}
\label{eq:design_measure}
\omparam(x,\lambda) = \summ{m}{j} \lambda_j \delta_{x_j},
\end{equation}
given by a weighted sum of Dirac delta functions.
To quantify the quality of a given measurement setup \(\omega\), we introduce the Fisher
information matrix $\fish(\om)$ with entries
\begin{equation}
\label{eq:fisher_in_intro}
\fish(\om)_{kl} = \int_{\Om} \sens{k}(x) \sens{l}(x) \de \om(x), \quad k,l \in \{1,\dots,n\}.
\end{equation}
Furthermore, by $\Psi$ we denote a scalar quality criterion, which is a positive, smooth,
and convex functional acting on the symmetric, positive-definite matrices. 
Examples for possible choices of $\Psi$ can be found in, e.g.,
\cite{ucinski2004optimal,pukelsheim1993optimal}; see also Section~\ref{sec:existence_and_optimality}.
We consider optimal designs given by the solutions to the optimization problem
\begin{align}\label{def:pre_design}
 \min_{x_j \in \Om,\; \lambda_j \geq 0,\; j=1,\ldots,m}
  \quad\Psi(\fish(\omparam(x,\lambda)) + \fish_0) + \beta\summ{m}{j} \lambda_j,
\end{align}
where $\fish_0$ is a nonnegative-definite matrix (e.g., $\fish_0 = 0$).
It can be interpreted as a~priori knowledge on
the distribution of the estimator, which may be obtained from previously collected data, for
instance in the context of sequential optimal design;
cf.~\cite{korkel1999sequential}. Here, we would choose
$\fish_0=\fish(\om_{\mathrm{old}})$ where the design measure $\om_{\mathrm{old}}$
describes the previous experiments. Alternatively,
we may adopt a Bayesian viewpoint and consider $\fish_0$ as the
covariance matrix of a Gaussian prior.
The last term involving the cost
parameter $\beta > 0$ takes into account the overall cost of the measurement process. For
other optimal design approaches with sparsity promoting regularization we refer to, e.g.,
\cite{chung2012experimental,haber2008numerical, alexanderian2014optimal}.
We emphasize that we neither impose any restrictions on the number of measurements nor restrict the set of
candidate locations for the sensors to a finite set.

At first glance, problem~\eqref{def:pre_design} is a non-convex problem due to the
parameterization in terms of the points \(x_j\), and has a combinatorial aspect due to the
unknown number of measurements \(m\). However, we can bypass these difficulties by embedding the
problem into a more general abstract formulation:
introducing the set of positive Borel measures $M^+(\Om)$ on $\Om$ we
determine an optimal design measure from
\begin{align}
\label{def:designprop}\tag{\ensuremath{P_{\beta}}}
 \min_{\om \in M^+(\Om)} \Psi(\fish(\om)+\fish_0) + \beta \mnorm{\om},
\end{align}
where \(\mnorm{\om}\) is the canonical total variation norm.
While it is clear that~\eqref{def:designprop} is a more general formulation
than~\eqref{def:pre_design}, it can
be shown that it always admits solutions of the form $\om =\summ{m}{j} \lambda_{j}
\delta_{x_{j}}$ for some \(n \leq m \leq n(n+1)/2\), making both problems
essentially equivalent; see Section~\ref{sec:existence_and_optimality}. 
We give a derivation of~\eqref{def:pre_design} and its connection
to~\eqref{def:designprop} in Section~\ref{sec:fromparamtoopt}.

As an alternative to the penalization term $\beta \mnorm{\om}$ in \eqref{def:designprop}
it is possible to consider a fixed budget for the experiment leading to
\begin{align}
\label{def:constraineddesign}\tag{\ensuremath{P^K}}
 \min_{\om \in M^+(\Om)} \Psi(\fish(\om)+\fish_0)
  \quad\text{subject to}\quad
  \mnorm{\om} \leq K,
\end{align}
where $K>0$ denotes the overall maximal cost of the measurements. Under certain conditions
on $\Psi$ it can be shown that the inequality constraint in \eqref{def:constraineddesign}
is attained for every optimal design; see Proposition~\ref{prop:equivalencedesigns}.  
This relates \eqref{def:constraineddesign} closely to the
concept of approximate designs introduced by Kiefer and Wolfowitz in
\cite{kiefer1959optimum} for general linear-regression, where possible experiments are modeled by the
probability measures on~$\Om$. We refer also to
\cite{atkinson2007optimum, pukelsheim1993optimal, pazman1986foundations,
  fedorov1972theory, fedorov2013optimal} for the analysis of this kind of optimal design
formulations.
For the adaptation of this approach to parameter estimation in distributed systems we refer to
\cite{ucinski2004optimal,banks2014experimental}.
Both formulations, \eqref{def:designprop} and \eqref{def:constraineddesign}, are closely
linked (see Section~\ref{sec:existence_and_optimality}): On the one hand, in the case of
no a~priori knowledge on the prior covariance, i.e.\ for \(\fish_0 = 0\), the
solutions of both problems coincide up to a scalar factor, depending on either \(K\) or
\(\beta\). On the other hand, incorporating a~priori knowledge, both problem formulations
parameterize the same solution manifold. The parameters~\(\beta\) and~\(K\), respectively, provide some indirect
control over the number of measurements, which is the cardinality of the support of the
optimal solution, in this case.

This paper is concerned with the analysis of
\eqref{def:designprop} and its efficient numerical solution. There exists
a large amount of literature on the solution of \eqref{def:constraineddesign} by
sequentially adding new Dirac delta functions to a sparse initial design measure. A description and
proofs of convergence for several variants of  these kind of methods can be found in,
e.g., \cite{fedorov1972theory,wynn1970} for the special case of
$\Psi(\cdot)=\operatorname{det}((\cdot)^{-1})$. These methods correspond to a conditional gradient,
or Frank-Wolfe~\cite{frank1956algorithm}, algorithm for minimizing the smooth functional
$\Psi(\fish(\cdot))$ over the ball with radius $K$ in $M^+(\Om)$.
Despite the ease of implementation the proposed methods suffer from some
drawbacks. On the one hand the speed of convergence is slow. Recently, in
\cite{boyd2015alternating} a sub-linear $\mathcal{O}(1/k)$ rate of convergence for the
error in the objective function in terms of the iteration number $k$ was proven by using an equivalent reformulation of
\eqref{def:constraineddesign} and results for the classical, finite dimensional conditional
gradient algorithm; see, e.g., \cite{jaggi2013revisiting}. Note, that without further
assumptions on $\Psi$ than convexity and for example
Lipschitz-continuity of its gradient, no better rate than $\mathcal{O}(1/k)$ can be expected in
general; see~\cite{dunn1979rates,dunn1980convergence}.
 
On the other hand, if only point insertion steps are considered, the support points of
the iterates tend to cluster around the optimal ones. To mitigate this effect and
accelerate the convergence, several modified variants of the sequential point insertion have
been proposed. In \cite{john1975review, atwood1973sequences} it is proposed to
alternate between point insertion steps and Wolfe's away steps (see \cite{Wolfe1970away}) to
remove mass from non-optimal points. Heuristically, adjacent support points may be lumped together;
see \cite{fedorov2012model}. More recently, several papers suggested to combine the
addition of a single Dirac-Delta in each iteration with the solution of a
finite-dimensional convex optimization problem and to apply point moving
\cite{boyd2015alternating} or vertex exchange methods \cite{yu2011cocktail}.  However, it
appears that there is no rigorous approach to guarantee the convergence of the
resulting algorithms towards a finitely supported optimal design on the function space
level.

In this paper we present a sequential point insertion algorithm for the (non-smooth)
optimal design problem \eqref{def:designprop} and prove convergence towards a
sparse minimizer of \eqref{def:designprop} comprising at most $n(n+1)/2$ support points.
To this purpose, we adapt the generalized conditional algorithm in measure space presented in
\cite{bredies2013inverse} for the minimization of a linear-quadratic Tikhonov-regularized
problem to our setting. Additionally we incorporate a post-processing step which ensures
that the support size of the generated iterates stays uniformly bounded. For further
sparsification and a practical acceleration of convergence we propose to alternate between
inserting several Dirac delta functions and point removal steps based on the (approximate) solution
of finite-dimensional $\ell_1$-regularized sub-problems, which are amenable for semi-smooth
Newton methods; see, e.g., \cite{ulbrich2002semismooth,milzarekfilter}.
A sublinear rate of convergence for the value of the objective function is proven for a
wide class of optimality criteria $\Psi$; see Theorem~\ref{thm:weakconvergencerate}.
\changed{Note that we do not employ acceleration strategies based on point
moving~\cite{bredies2013inverse,boyd2015alternating}, which are difficult to realize since
we will employ \(C^0\)-finite elements, which are not continuously differentiable, for the discretization of the underlying PDEs.}


The paper is organized as follows: In Section~\ref{sec:fromparamtoopt} we present the
optimal design formulation under consideration. In Section~\ref{sec:analysis} we introduce
notation and state basic existence results for solutions to \eqref{def:designprop} as well
as first order optimality conditions. In Section~\ref{sec:algorithm} the generalized
conditional gradient algorithm for the
algorithmic solution of \eqref{def:designprop} is proposed and analyzed. Different
acceleration and sparsification strategies are presented and a (worst-case) sub-linear
convergence rate for the objective functional is proven.
The paper
is completed by a numerical example given in Section~\ref{sec:Numerics} to illustrate the
thory and show the practical efficiency of the
algorithms. In particular, we investigate the effect of the described
acceleration strategies.

\section{From Parameter estimation to optimal design}
\label{sec:fromparamtoopt}
In this section we derive the convex optimal design formulation
\eqref{def:designprop} and establish its connection to the non-convex problem
\eqref{def:pre_design}. We start by defining a least-squares estimator for
parameter estimation and the notion of the associated linearised
confidence domains.

\subsection{Parameter estimation}
\label{sec:paraest}

Within the scope of this work we consider the identification of a parameter $q$
entering a weak form $a(\cdot,\cdot)(\cdot)\colon~Q_{ad}\times \hat{Y}\times Y \rightarrow \R$,
which can be non-linear in its first two arguments but is linear in the last one. Here,
$Q_{ad} \subset \R^n$, $n \in \N$, denotes a set of admissible parameters, $Y$
denotes a suitable Hilbert space of functions, and $\hat{Y}=\hat{y}+Y$, where the function
$\hat{y}$ allows to include non-homogeneous (Dirichlet-type) boundary conditions in the model.
For every $q\in Q_{ad}$ a function $y=S[q]\in \hat{Y}$ is called
the state corresponding to $q$ if it is a solution to
\begin{equation}\label{stateequation}
\begin{aligned}
  a(q,y)(\varphi)=0\quad \forall \varphi \in Y.
\end{aligned}
\end{equation}
The operator $S \colon Q_{ad} \rightarrow \hat{Y}$ mapping a parameter $q$ to the associated state is called the parameter-to-state operator.
For instance, one might think of a Sobolev space defined on an open and
bounded Lipschitz domain $\Omega\subset \R^d$, $d\in \{1,2,3\}$ and as
$a(\cdot, \cdot)(\cdot)$ being the weak formulation of an elliptic partial
differential operator.
\begin{remark}
  Concretely, in the case of PDE~\eqref{Combustion}, we define
  \begin{align*}
    a(q,y)(\varphi)
    = \left(\nabla y, \nabla \varphi \right)_{L^2}
    + \left(\alpha \nabla y, \varphi \right)_{L^2}
    + \left(D \exp\left\{-E/(d-y)\right\}y(c-y), \varphi\right)_{L^2},
  \end{align*}
  and \(Y=\{\,\varphi \in H^1(\Omega)\;|\;\varphi\rvert_{\Gamma_{\mathrm{in}}}=0\,\}\).
  Here, the parameter vector is given by \(q = (D,E) \in \R^2\).
\end{remark}
{\color{black}
We define a closed set \(\Om \subset \bar\Omega\), on which it is possible to carry out
pointwise observations of the state.
}
We make the following general regularity assumption. 
\begin{assumption} \label{ass:Existenceofstate}
For every $q\in Q_{ad}$ there exists a unique solution $y\in \hat{Y} \cap C(\Om)$ to~\eqref{stateequation}.
The parameter-to-state mapping $S$ with
\begin{align*}
 S\colon Q_{ad}\rightarrow C(\Om) \quad\text{with}\quad q \mapsto S[q] = y,
\end{align*}
is continuously differentiable in a neighborhood of $Q_{ad}$ in $\R^n$.
We denote the directional derivative of $S$ in the
direction of the $k$-th unit vector by \( \partial_k S[q] \in C(\Om)\) and by $\partial S[q]\in C(\Om, \R^n)$ the vector of partial derivatives.
\end{assumption}
We emphasize that, under suitable differentiability assumptions on the form $a(\cdot, \cdot)$ and
Assumption~\ref{ass:Existenceofstate}, the $k$-th partial derivative $\delta y_k=\partial_k S[q]\in Y \cap C(\Om)$,
$k=1,\ldots,n,$ is the unique solution of the sensitivity equation
\begin{equation}\label{sensitivity}
\begin{aligned}
a'_y(q,y)(\delta y_k, \varphi)
= - a'_{q_k}(q,y)(\varphi), \quad \forall \varphi \in Y,
\end{aligned}
\end{equation}
where $y = S[q]$ and $a'_y$ and $a'_{q_k}$ denote the partial derivatives of the form $a$ with respect
to the state and the $k$-th parameter; see, e.g., \cite{troeltzsch2010optimale}.

In the following the exact value of the parameter vector $q \in \R^n$ appearing
in~\eqref{stateequation} is denoted by $q^*$. While, for
the purposes of analysis we can assume this value to be known, it will be replaced with an
appropriate a~priori guess in practice.
To estimate the parameter $q$ we consider measurement data $y_d$ collected at a set of $m$ disjoint
sensor locations $\{x_{j}\}^{m}_{j=1}\subset \Om$.
To take measurement errors into account we assume that the data $y^{j}_d \approx
S[q^*](x_{j})$ is given by the response of the model to the exact parameter values, which are additively perturbed by independently normally distributed
noise; cf., e.g., \cite{bates2007nonlinear}.
\changed{
Thus, we obtain that
\begin{align*}
y^{j}_d = S[q^*](x_{j}) + \epsilon_{j},
~\epsilon_{j} \sim \mathcal{N}(0,1/\lambda_j),~\operatorname{Cov}(\epsilon_{j}, \epsilon_{i})=0,
\end{align*}
for all $i,j=1,\ldots, m$ and $j \neq i$, where the diligence factor
$\lambda_{j}$ denotes the inverse of the variance of the
measurement at the
$j$-th location. We assume that \(\lambda_j\) can be chosen arbitrarily in
\(\R_+\setminus\set{0}\) in the following.
\begin{remark} \label{rem:onweights}
The scalar~$\lambda_j > 0$ corresponds to the reciprocal of the error variance of the
measurement taken at~$x_j$. Thus, it is part of the noise model. Since the diligence
factors~$\lambda_j$ are also subject to optimization, this interpretation requires some
additional discussion. First, assume that all measurements are performed with a given
sensor with unit error variance. Furthermore, suppose that taking~$N\in \N$
repeated measurements at the same location is possible.
By averaging the obtained measurement data and using the linearity of the variance,
$N$ measurements can be interpreted as a single one
with the improved error variance of~$1/N$. In this light, we can
interpret~\eqref{def:pre_design} as a convex relaxation of a mixed-integer optimization
problem for the overall number of different sensor sites~$m$, the positions~$x_j$, and the
associated number~$\lambda_j\in \N$ of repeated measurements at this point. Another point
of view is to simply assume that performing a single measurement with a given error
variance~$1/\lambda_j$ for any $\lambda_j > 0$ is possible by manufacturing or buying a
suitable sensor with precisely this variance.
\end{remark}}

To emphasize that the data $y_d$ is a random variable,
conditional on the measurement errors, we will write $y_d(\eps)$ in the following and
define the least squares functional
\begin{align}\label{Leastsquaresfunctional}
  J(q,\eps) = \frac{1}{2} \summ{m}{j} \lambda_{j}(S[q](x_{j})-y^{j}_{d}(\eps))^2
\end{align}
as well as the possibly multi-valued least squares estimator
\begin{align} \label{Leastsquareestimator}
  \tilde{q}\colon \mathbb{R}^m \rightarrow \mathcal{P}(\R^n),\;
  \tilde{q}(\eps)= \argmin_{q\in Q_{ad}} J(q,\eps),
\end{align}
where $\mathcal{P}(\R^n)$ denotes the power set of $\R^n$.
Note that this is the usual maximum likelihood estimator (MLE) using the assumption
on the distribution of the measurement errors $\eps_{j}$.

\subsection{Optimal design}
Since the measurement errors are modelled as random variables, the uncertainty in the data
is also propagated to the estimator. This means that $\tilde{q}$ should be interpreted as
a random vector. To quantify the bias in the estimation and to assess the quality of
computed realizations of the estimator, one considers the non-linear confidence domain of
$\tilde{q}$ defined as
\begin{align}\label{confidencedomain}
D(\tilde{q},\alpha)(\epsilon)
= \left\{ p \in Q_{ad}\;\big|\;J(p,\epsilon)-\min_{q\in Q_{ad}} J(q,\epsilon)\leq \gamma_n^2(\alpha)\right\},
\end{align}
where $\gamma_n^2(\alpha)$ denotes the $(1-\alpha)$-quantile of the
$\chi^2$-distribution with $n$ degrees of freedom; see, e.g., \cite{bock1987randwertproblemmethoden, beal1960}. We
emphasize that the confidence domain is a function of the measurement errors and therefore
a random variable whose realizations are subsets of the parameter space. In this context,
the confidence level $\alpha\in(0,1)$ gives the probability that a certain realization of
$D(\tilde{q}(\epsilon),\alpha)(\epsilon)$ contains the true parameter vector $q^*$.

Consequently, a good indicator for the performance of the estimator $\tilde{q}$ is given by the
size of its associated confidence domains. The smaller their size, the closer
realizations of $\tilde{q}$ will be to $q^*$ with high probability. Given a realization
$D(\bar{q},\alpha)(\bar{\epsilon})$ of the non-linear confidence domain, its size only
depends on the position and the number of the measurements. To obtain a more reliable
estimate for the parameter vector, the experiment, i.e.\ positions $x_{j}$ and the measurement weights $\lambda_{j}$ should
be chosen a priori in such a way that confidence domains of the resulting estimator are
small. However, for general models and parameter-to-state mappings $S$ the estimator
$\tilde{q}$ cannot be given in closed form. Therefore it is generally not
possible to provide an exact expression for $D(\tilde{q},\alpha)$.

To circumvent this problem we follow the approach  proposed in,
e.g., \cite{pronzato2003removing, fedorov2013optimal} and consider a linearisation of the
original model around an a~priori guess $\hat{q}$ of $q^*$ which can stem from historical
data or previous experiments.
In the following, $\epsilon\in \R^m$ denotes an arbitrary vector of measurement errors,
and $x \in \R^{d\times m}$, \(x=(x_1,\dots, x_m)\), with \(x_j \in \R^d\), $j=1,\dots,m,$ stands for the
measurement locations. For abbreviation we write \(S[\hat{q}](x)\in \R^m\) for
the vector of observations with
\(S[\hat{q}](x)_j = S[\hat{q}](x_j)\), \(j=1,\ldots,m\).
Moreover the matrices $X \in \R^{m \times n}$ and $\Sigma^{-1} \in \R^{m \times m}$ are defined as
\begin{align*}
 X_{jk} = \partial_k S[\hat{q}](x_{j}), \quad \Sigma^{-1}_{ij} = \delta_{ij} \lambda_i, \quad i,j=1,\ldots,m,~k=1,\ldots,n,
\end{align*}
and are assumed to have full rank.
We arrive at the linearised least-squares functional
\begin{align*}
 J_{\mathrm{lin}}(q,\epsilon)
= \frac{1}{2} \summ{m}{j} \lambda_{j}(S[\hat{q}](x_{j})
  + \partial S[\hat{q}](x_{j})^\top (q-\hat{q}) - y^{j}_{d}(\epsilon))^2,
\end{align*}
which can be equivalently written as
\begin{align*}
 J_{\mathrm{lin}}(q,\epsilon)
= \frac{1}{2} \|X(q-\hat{q}) + S[\hat{q}](x)-y_d (\epsilon)\|^2_{\Sigma^{-1}},
\end{align*}
where $\|v\|_{\Sigma^{-1}} = v^\top \Sigma^{-1} v$ for $v \in \R^m$.
In contrast to the estimator $\tilde{q}$~\eqref{Leastsquareestimator}, the associated linearised estimator
\begin{align} \label{linearizedestimator}
  \tilde{q}_{\mathrm{lin}}\colon\mathbb{R}^m \to \mathbb{R}^n,
  \quad
  \tilde{q}_{\mathrm{lin}}(\epsilon) = \argmin_{q\in \R^n} J_{\mathrm{lin}}(q,\epsilon),
\end{align}
is single-valued and its realizations
can be calculated explicitly (see, e.g., \cite{tarantola2005inverse}), as
\begin{align} \label{explicitlinearized}
\tilde{q}_{\mathrm{lin}}(\epsilon)
  = \hat{q} + (X^\top\Sigma^{-1} X)^{-1}X^\top \Sigma^{-1}
  \left (y_d(\epsilon) - S[\hat{q}](x)\right).
\end{align}
Due to the assumptions on the noise $\epsilon$ the estimator $\tilde{q}_{\mathrm{lin}}$ is
a Gaussian random variable with
\(\tilde{q}_{\mathrm{lin}} \sim \mathcal{N}(\tilde{q}_{\mathrm{lin}}(0), (X^\top\Sigma^{-1} X)^{-1})\).
The associated realizations of its confidence domain (see, e.g.,
\cite{bock1987randwertproblemmethoden}) are thus given by
{\color{black}
\begin{align}\label{linearizedconfidence}
D(\tilde{q}_{\mathrm{lin}},\alpha)(\epsilon)
 = \left\{\tilde{q}_{\mathrm{lin}}(\varepsilon) + (X^\top \Sigma^{-1} X)^{-1}X^\top
  \Sigma^{-1/2} \xi,\;\big|\; \xi \in \R^m,\,
  \norm{\xi}_{\R^m} \leq \gamma_n(\alpha)\right\},
\end{align}
where \(\norm{\cdot}_{\R^m}\) denotes the Euclidean norm.
}
We point out that the linearised confidence domains are ellipsoids in the parameter space centered
around $\tilde{q}_{\mathrm{lin}}$. Their half axes are given by the eigenvectors of
the Fisher-information matrix $\fish = X^\top \Sigma^{-1} X$ with lengths proportional to the
associated eigenvalues. Their sizes depend only on the a~priori
guess $\hat{q}$ and the setup of the experiment, i.e.\ the position and total number of
measurements, but not on the concrete realization of the measurement noise. Consequently
we can improve the estimator by minimizing the linearised confidence domains as a function
of the measurement setup, which leads to \eqref{def:pre_design}. 

To establish the connection to the sparse optimal design approach we observe that
the entries of the Fisher-information matrix can be written alternatively as
\begin{equation} \label{def:Fisherinf}
(X^\top \Sigma^{-1} X)_{kl}
  = \summ{m}{j} \sens{k}(x_j)\sens{l}(x_j) \lambda_j
  = \int_{\Om} \sens{k} \sens{l}  \de \om
  = \fish(\omega)_{kl},
\end{equation}
with the design measure $\om = \summ{m}{j} \lambda_{j} \delta_{x_{j}}$.
Furthermore we note that for such a design measure there holds $\mnorm{\om} = \sum_{j=1}^m \lambda_j$.
Consequently, for some design criterion \(\Psi\) and prior knowledge \(\fish_0\),
the optimal design problem~\eqref{def:pre_design} can be equivalently expressed as
\begin{equation}\label{def:pre_design_measure}
\min_{\om \in \operatorname{cone}\{\,\delta_x\;|\;x \in \Om\,\}} \quad\Psi(\fish(\om) + \fish_0) + \beta \mnorm{\om},
\end{equation}
where we minimize the objective functional over all non-negative linear combinations of
Dirac delta functions corresponding to points in the observational domain.
A~priori it is however unclear if this reformulation admits an optimal solution, since the
admissible set is not closed in the weak* topology on $M(\Om)$. For a rigorous analysis
one therefore has to pass to the closure
$\overline{\operatorname{cone}\{\,\delta_x\;|\;x \in \Om\,\}}^*=M^+(\Om)$;
{\color{black}
see, e.g., \cite[Problem~24.C]{Brezis:2010}.
}
As~\eqref{def:Fisherinf} suggests, the definition of $\fish$ can be extended to the set
of positive regular Borel measures \(M^+(\Om)\), resulting in the more general problem
formulation \eqref{def:designprop}.
\changed{\begin{remark}
In view of Remark~\ref{rem:onweights}, it may seem reasonable to incorporate upper bounds on
the coefficients~$\lambda_j$ into the formulation. This could be motivated either by
restricting the maximum number of repeated measurements at the same location (in case
the problem arises from a problem with identical sensors and integer \(\lambda_j\)
representing the number of measurements) or correspond to a restriction on the
variance provided by the best available sensor (e.g., due to manufacturing constraints).
Let us briefly discuss this issue.
Without restriction, we impose the restriction~$0 \leq \lambda_j \leq 1$, thus replacing the
cone of Dirac delta functions in~\eqref{def:pre_design_measure} by the set
\begin{align*}
M^+_{\text{const}}(\Om) = \left\{\,\om= \sum^m_{j=1} \lambda_j \delta_{x_j} \;\Big|\;
x_j \neq x_i\text{ for } i\neq j,\; 0 \leq \lambda_j \leq 1,\; m\in \N \,\right\} \subset M^+(\Om).
\end{align*}
We distinguish two cases:
First, let~$\Om$ be the closure of a bounded domain.
In this case,~$M^+_{\text{const}}(\Om)$ is not weak* closed.
Indeed, it is straightforward to argue
that~$\operatorname{cone}\{\,\delta_x\,|\,x \in \Om\,\} \subset \overline{M^+_{\text{const}}(\Om)}^*$ and
consequently~$\overline{M^+_{\text{const}}(\Om)}^*=M^+(\Om)$, i.e.\ we again arrive
at~\eqref{def:designprop}.
This stems back to the assumption that measurements at different
locations are pairwise uncorrelated. Thus, a measurement with arbitrarily small variance
at a point \(x\) can be approximated by a number of independent measurements with unit
variance at distinct points located in a small neighborhood of~\(x\).
Second, in the case that~$\Om$ is a collection of a finite number of isolated points,
replacing $M^+(\Om)$ by $M^+_{\text{const}}(\Om)$ is possible, since the latter is weak*
closed. However, for such $\Om$ the problem~\eqref{def:designprop} can be rewritten as a
simpler finite dimensional optimization problem 
(cf.\ section~\ref{sec:acceleration}).
We do not specifically discuss this case in the following.
\end{remark}}

\section{Analysis of the optimal design problem} \label{sec:analysis}
In the following, we fix the general notation for the remainder of the paper. We consider an
observation set $\Om$ in which we allow measurements to be carried out.
It is assumed to be a closed subset of \(\bar\Omega\), which is
the closure of the bounded spatial domain $\Omega \subset \R^d$.
On $\Om$ we define the space of regular Borel measures \(M(\Om)\) as the topological dual
of $C(\Om)$, the space of continuous and bounded functions (see,
e.g., \cite{elstrodt2013mass}), with associated 
duality pairing $\pair{\cdot, \cdot}$. The norm on $M(\Om)$ is given by
\begin{align*}
\|\om\|_{M(\Om)}
=\sup_{y\in C(\Om),\; \maxnorm{y} \leq 1} \langle y,\om \rangle,
\end{align*} 
where $\maxnorm{\cdot}$ is the supremum norm on $C(\Om)$.
By $M^+(\Om)$ we refer to the set of positive Borel measures on $\Om$ (see,
e.g., \cite[Def.~1.18]{Rudin}),
\begin{align*}
M^+(\Om)
= \left\{\,\om \in M(\Om) \;|\; \langle y, \om \rangle \geq 0,\; \forall y\in C(\Om)\; y \geq 0\,\right\},
\end{align*}
with convex indicator function $I_{\om \geq 0}$.
For $\om \in M(\Om)$ the support is defined as usual by
\begin{align*}
\supp\om
 = \Om \backslash \left\{\,\bigcup B\in \mathcal{B}(\Om)\;|\;B \text{ open},\,\om(B)=0\,\right\},
\end{align*}
{\color{black}where \(\mathcal{B}(\Om)\) are the Borel subsets of \(\Om\).}
Note that the support is a closed set. \changed{In case the support is a finite
  set, we denote its cardinality (or counting measure) by \(\#\supp\om \in \N\).}

A sequence $\{\om_k\} \subset M(\Om)$ is called
convergent with respect to the weak*-topology with limit $\om\in M(\Om)$ if
$\pair{ y, \om_k} \to \pair{y, \om}$ for \(k \rightarrow\infty\) for all $y \in C(\Om)$ indicated by
$\om_k \rightharpoonup^* \om$.
Additionally we define the usual Lebesgue spaces of integrable and square integrable functions
$L^1(\Om)$ and $L^2(\Om)$, respectively, as well as the usual Sobolev space $H^1_0(\Om)$
 with associated (semi-)norm and inner product; see,
e.g., \cite{Adamssobolev}.
Furthermore we denote by $\operatorname{Sym}(n)$,
$\operatorname{NND}(n)$, and $\operatorname{PD}(n)$ the
sets of symmetric, symmetric non-negative definite {\color{black} 
(also, positive semi-definite)},
and symmetric positive definite matrices, respectively. On the set of symmetric matrices
we consider the inner product
\((A,B)_{\operatorname{Sym}(n)} = \Tr(AB^\top)\) for \(A,B \in \operatorname{Sym}(n)\),
where \(\Tr\) denotes the trace, and the L{\"o}wner partial order
\begin{align*}
0 \leq_L A \;\Leftrightarrow\; A \text{ is positive semidefinite}.
\end{align*} 
Last, for $\phi\colon M(\Om)\rightarrow \mathbb{R} \cup \{\,\infty\,\}$
and a convex set $M \subset M(\Om)$ we define the domain of $\phi$ over $M$ as
\begin{align*}
\dom_{M}\phi = \left\{\,\om \in M\;|\;\phi(\om)<\infty \,\right\},
\end{align*}
where the index is omitted when $M = M(\Om)$.

We consider design criteria of the form
\(\Psi(\cdot + \fish_0)\), where \(\fish_0 \in \operatorname{NND}(n)\)
(e.g. \(\fish_0 = 0\)) incorporates prior knowledge, as described in the introduction.  
Concerning the function $\Psi$ the following assumptions are made.
\begin{assumption}\label{ass:designcrit}
The function $\Psi\colon \operatorname{Sym}(n)\rightarrow \R \cup \{+\infty\}$ satisfies:
\begin{itemize}
\item [$\textbf{A1}$] There holds $\dom\Psi=\operatorname{PD}(n)$.
\item [$\textbf{A2}$] $\Psi$ is continuously differentiable for every $N\in \operatorname{PD}(n)$.
\item [$\textbf{A3}$] $\Psi$ is non-negative on $\operatorname{NND}(n)$.
\item [$\textbf{A4}$] $\Psi$ is lower semi-continuous and convex on $\operatorname{NND}$.
\item  [$\textbf{A5}$] $\Psi$ is monotone with respect to the L{\"o}wner ordering on $\operatorname{NND}(n)$, i.e. there holds
\begin{align*}
N_1\leq_L N_2 \Rightarrow \Psi(N_1)\geq \Psi(N_2)\quad \forall N_1,~N_2 \in \operatorname{NND}(n).
\end{align*}
\end{itemize} 
\end{assumption}
While Assumptions $(\textbf{A1})$ to $(\textbf{A4})$ are important for the existence of
optimal designs and the derivation of first order optimality conditions, Assumption
$(\textbf{A5})$ is related to the size of the linearised confidential
domains~\eqref{linearizedconfidence}.
Given two design measures $\om_1,\om_2 \in M^+(\Om)$ with
$\fish(\om_1)$, $\fish(\om_1)\in \operatorname{PD}(n)$ and
$\fish(\om_1)\leq_L\fish(\om_2)$ it holds
\begin{align*}
\mathcal{E}_2 = \{\, \delta q\in \R^n \;|\;\delta q^\top \fish(\om_2) \delta q \leq r\,\}
  \subset \mathcal{E}_1 = \{\, \delta q\in \R^n\;|\;\delta q^\top \fish(\om_1) \delta q \leq r \,\}
\end{align*}
for any $r>0$. {\color{black} Thus, $(\textbf{A5})$ ensures that $\Psi$ is a
scalar criterion for the size of the
linearised confidence ellipsoids \eqref{linearizedconfidence} that is compatible with
the inclusion of sets}. For a similar set of
conditions we refer to~\cite[p.~41]{ucinski2004optimal}. The given assumptions can be verified
for a large class of classical optimality criteria, among them the A and D criterion
\begin{align*}
\Psi_A(N) = 
  \begin{cases}
    \Tr(N^{-1}), & N \in \operatorname{PD}(n), \\
    \infty , & \text{else},
  \end{cases}
\quad
\Psi_D(N)=
   \begin{cases}
     \operatorname{det}(N^{-1}), & N \in \operatorname{PD}(n), \\
     \infty , & \text{else},
  \end{cases}
\end{align*}
corresponding to the combined length of the half axis and the volume of the confidence
ellipsoids. Additionally, one may also use weighted versions of the design criteria:
for instance $\Psi^w_A(N)=\Tr(WN^{-1}W)$ allows to put special emphasis
on particular parameters by virtue of the weight matrix
$W\in \operatorname{NND}(n)$. However,
we emphasize that the results presented in this paper cannot be applied to other
non-differentiable popular criteria such as the $E$ criterion defined by
\begin{align*}
\Psi_E(N)=\begin{cases} \max_{i}\left\{\lambda_i(N^{-1})\right\}, & N \in \operatorname{PD}(n), \\
         \infty , & \text{else}.\end{cases}
\end{align*}
describing the length of the longest half axis and the length of the longest side of the
smallest box containing the confidence ellipsoid. In this case, one can
for instance resort to smooth approximations of the design criteria.

\subsection{Existence of optimal solutions to \eqref{def:designprop} and optimality conditions }
\label{sec:existence_and_optimality}
In this section we prove the existence of solutions as well as first order necessary and
sufficient optimality conditions for the optimal design problem
\eqref{def:designprop}. Additionally, results on the sparsity pattern of optimal designs
are derived. First, 
{\color{black}as canonical extension of~\eqref{def:Fisherinf}},
we introduce the linear and continuous Fisher-operator $\fish$ by
\begin{align*}
\fish\colon M(\Om)\rightarrow \operatorname{Sym}(n),
  \quad\text{with}\quad
  \fish(\om)_{k,l} = \pair{\sens{k} \sens{l},\, \om}
  \quad \forall k,l\in \{\,1, \ldots, n\,\}.
\end{align*}
It is readily verified that it is the Banach space adjoint of the operator
\[
\fish^*\colon \operatorname{Sym}(n) \to C(\Om),
\quad\text{with}\quad
\fish^*(A) = \varphi_A,
\]
where \(\varphi_A \in C(\Om)\) is the continuous function given for \(A \in \operatorname{Sym}(n)\) by
\begin{align}
\label{eq:adjoint_fisher}
\varphi_A(x)
= \Tr\left(\sensv(x) \sensv(x)^\top A\right)
= \sensv(x)^\top A\, \sensv(x)
\quad \forall x \in \Om.
\end{align}
Now, we formulate the reduced design problem~\eqref{def:designprop} as
\begin{align*}
 \min_{\om \in M^+(\Om)} F(\om) = \psi(\om) + \beta\mnorm{\om},
\end{align*}
where $\psi(\om) = \Psi(\fish(\om)+\mathcal{I}_0)$.
In the following proposition we collect some properties of the reduced functional.
\begin{proposition} \label{prop:properties}
Let Assumptions $(\textbf{A1})$--$(\textbf{A5})$ be fulfilled and let $\fish_0\in \operatorname{NND}(n)$ be given.
The operator $\fish$ and the functional $\psi$ satisfy:
\begin{itemize}
\item [1.] For every $\om \in M^+(\Om)$ there holds $\mathcal{I}(\om)\in \operatorname{NND}(n)$.
\item [2.] There holds $\dom_{M^+(\Om)}\psi
  = \left\{\,\om \in M^+(\Om) \;|\; \mathcal{I}(\om)+\mathcal{I}_0 \in \operatorname{PD}(n)\,\right\}$.
\item [3.] $\psi$ is differentiable with derivative
  $\psi'(\om) = \fish^*\left(\Psi'(\fish(\om)+\fish_0)\right)\in C(\Om)$ for every
  $\om \in \dom_{M^+(\Om)} \psi$. The derivative can be identified with the
  continuous function
  \begin{equation}\label{eq:gradient_psi}
    \left[\psi'(\om)\right](x)
    = \partial S[q](x)^\top \Psi'(\fish(\om)+\fish_0)\, \partial S[q](x)
    \leq \changed{0}
    \quad\forall x\in\Om.
  \end{equation}
  Moreover the gradient $\psi'\colon\dom_{M^+(\Om)}\psi \to C(\Om)$ is
  weak*-to-strong continuous.
\item [4.] $\psi$ is non-negative on $\dom_{M^+(\Om)}\psi$.
\item [5.] $\psi$ is weak* lower semi-continuous and convex on $M^+(\Om)$.
\item [6.] $\psi$ is monotone in the sense that
\begin{align*}
\fish(\om_1)\leq_L \fish(\om_2) \Rightarrow \psi(\om_1)\geq \psi(\om_2)\quad \forall \om_1,~\om_2 \in M^+(\Om).
\end{align*}
\end{itemize}
\end{proposition}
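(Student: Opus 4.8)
The plan is to verify the six assertions in turn, noting that all but the third are immediate consequences of the linearity of $\fish$ combined with the assumptions on $\Psi$. The structural fact I would record at the outset is the quadratic-form representation
\[
\xi^\top \fish(\om)\, \xi = \pair{\left(\summ{n}{k}\xi_k \sens{k}\right)^2,\, \om}
\qquad \forall\, \xi \in \R^n,
\]
obtained by expanding the definition of $\fish(\om)_{kl}$. Claim~1 follows at once, since the integrand is a non-negative continuous function and $\om \in M^+(\Om)$ pairs non-negatively with such functions, so $\fish(\om)\in\operatorname{NND}(n)$. Claim~2 is exactly (\textbf{A1}): because $\psi(\om)=\Psi(\fish(\om)+\fish_0)$ and $\dom\Psi=\operatorname{PD}(n)$, finiteness of $\psi(\om)$ is equivalent to $\fish(\om)+\fish_0\in\operatorname{PD}(n)$. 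Claim~4 combines Claim~1 with $\fish_0\in\operatorname{NND}(n)$ to place $\fish(\om)+\fish_0$ in $\operatorname{NND}(n)$, whence (\textbf{A3}) gives $\psi\ge 0$. Claim~6 uses that adding $\fish_0$ preserves the L\"owner order, so $\fish(\om_1)\leq_L\fish(\om_2)$ implies $\fish(\om_1)+\fish_0\leq_L\fish(\om_2)+\fish_0$, and the monotonicity (\textbf{A5}) of $\Psi$ transfers verbatim to $\psi$.

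For Claim~5, convexity holds because $\om\mapsto\fish(\om)+\fish_0$ is affine (as $\fish$ is linear) and $\Psi$ is convex by (\textbf{A4}), so the composition is convex. For weak* lower semicontinuity I would use that each entry $\sens{k}\sens{l}$ lies in $C(\Om)$ by Assumption~\ref{ass:Existenceofstate}; hence $\om_k\rightharpoonup^*\om$ forces $\fish(\om_k)_{kl}=\pair{\sens{k}\sens{l},\om_k}\to\fish(\om)_{kl}$, i.e.\ $\fish(\om_k)\to\fish(\om)$ in $\operatorname{Sym}(n)$. Lower semicontinuity of $\Psi$ on $\operatorname{NND}(n)$ from (\textbf{A4}) then yields $\psi(\om)\leq\liminf_k\psi(\om_k)$.

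The substantive work is Claim~3. Differentiability on the domain follows from the chain rule: $\psi$ is the composition of the bounded linear map $\fish(\cdot)+\fish_0$ with $\Psi$, which is $C^1$ on the open set $\operatorname{PD}(n)$ by (\textbf{A2}); for $\om\in\dom_{M^+(\Om)}\psi$ we have $\fish(\om)+\fish_0\in\operatorname{PD}(n)$, so
\[
\psi'(\om)[h] = \big(\Psi'(\fish(\om)+\fish_0),\,\fish(h)\big)_{\operatorname{Sym}(n)} = \pair{\fish^*\big(\Psi'(\fish(\om)+\fish_0)\big),\,h},
\]
and the pointwise formula in~\eqref{eq:gradient_psi} is read off from~\eqref{eq:adjoint_fisher} with $A=\Psi'(\fish(\om)+\fish_0)$.

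The sign assertion $\psi'(\om)\leq 0$ is the delicate point and the main obstacle. I would derive it from (\textbf{A5}) by showing that monotonicity forces $\Psi'(N)\leq_L 0$ at $N=\fish(\om)+\fish_0$. Concretely, for any $H\in\operatorname{NND}(n)$ and small $t>0$ one has $N\leq_L N+tH$ (and $N+tH\in\operatorname{PD}(n)$), so (\textbf{A5}) gives $\Psi(N+tH)\leq\Psi(N)$; dividing by $t$ and letting $t\to 0^+$ yields $(\Psi'(N),H)_{\operatorname{Sym}(n)}\leq 0$. Since this holds for every $H\in\operatorname{NND}(n)$ and the cone $\operatorname{NND}(n)$ is self-dual under the trace inner product, I conclude $\Psi'(N)\leq_L 0$, i.e.\ $\Psi'(\fish(\om)+\fish_0)$ is negative semidefinite; the pointwise formula then gives $[\psi'(\om)](x)=\sensv(x)^\top\Psi'(\fish(\om)+\fish_0)\,\sensv(x)\leq 0$. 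Finally, weak*-to-strong continuity chains the continuities already used: $\om_k\rightharpoonup^*\om$ in the domain gives $\fish(\om_k)\to\fish(\om)$, continuity of $\Psi'$ on $\operatorname{PD}(n)$ (from (\textbf{A2})) gives $\Psi'(\fish(\om_k)+\fish_0)\to\Psi'(\fish(\om)+\fish_0)$, and boundedness of the operator $\fish^*$ on the finite-dimensional space $\operatorname{Sym}(n)$ propagates this to $\psi'(\om_k)\to\psi'(\om)$ strongly in $C(\Om)$.
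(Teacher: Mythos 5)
Your proof is correct and, for items 1, 2, 4, 5, and 6 as well as the differentiability, adjoint identification, and weak*-to-strong continuity parts of item 3, follows essentially the same route as the paper: the quadratic-form identity for claim 1, Assumption (\textbf{A1}) for claim 2, the chain rule plus the adjoint expression~\eqref{eq:adjoint_fisher} for the derivative, and the weak*-to-strong continuity of $\fish$ combined with (\textbf{A3})--(\textbf{A5}) for the remaining statements. The one place where you genuinely diverge is the sign assertion $\psi'(\om)\leq 0$ in~\eqref{eq:gradient_psi}. The paper stays at the measure level: it writes $\psi'(\om)(x)=\pair{\psi'(\om),\delta_x}$ as a one-sided difference quotient $\lim_{\tau\to 0^+}\tau^{-1}\left[\psi(\om+\tau\delta_x)-\psi(\om)\right]$ and observes that each quotient is nonpositive by (\textbf{A5}), since adding $\tau\,\sensv(x)\sensv(x)^\top$ increases the information matrix in the L{\"o}wner order. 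You instead argue at the matrix level: monotonicity along rays $N+tH$ with $H\in\operatorname{NND}(n)$ gives $(\Psi'(N),H)_{\operatorname{Sym}(n)}\leq 0$ for every such $H$, and self-duality of the cone $\operatorname{NND}(n)$ under the trace inner product upgrades this to $\Psi'(N)\leq_L 0$, from which the pointwise bound follows by testing with the rank-one matrices $\sensv(x)\sensv(x)^\top$. Your version yields a slightly stronger intermediate fact (negative semidefiniteness of $\Psi'$ at every point of the domain), at the cost of invoking cone self-duality; note that for the proposition itself this extra step is dispensable, since testing directly with $H=\sensv(x)\sensv(x)^\top$ already gives the claim, which is in essence what the paper's Dirac-perturbation argument accomplishes.
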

\begin{proof}
To prove the first claim we observe that there holds
\begin{align}
z^\top \fish(\om)z = \pair{(\partial S[\hat{q}]^\top z)^2, \om} \geq 0~\quad \forall z\in \R^n
\end{align}
for an arbitrary $\om \in M^+(\Om)$, thus $\fish(\om)\in \operatorname{NND}(n)$.
Statement $2.$ follows directly 
with $(\textbf{A1})$.
For $\om \in \dom_{M^+(\Om)}\psi$ the differentiability of $\psi$ follows from
assumption $(\textbf{A2})$ using the chain rule. We obtain the derivative
$\psi'(\om) \in M(\Om)^*$ characterized by
\begin{align*}
\langle \psi'(\om), \delta \om \rangle_{M^*,M} 
 = \Tr(\Psi'(\fish(\om)+\fish_0) \fish(\delta \om))
 = \langle \fish^*\left(\Psi'(\fish(\om)+\fish_0)\right), \delta \om \rangle_{M^*,M},
\end{align*} 
for every $\delta \om \in M(\Om)$, where $\langle \cdot, \cdot \rangle_{M^*,M}$ denotes the
duality pairing between $M(\Om)$ and its topological dual space.
Using the adjoint expression for \(\fish\) given in~\eqref{eq:adjoint_fisher} we can
identify \(\psi'(\om)\) with the continuous function~\eqref{eq:gradient_psi}.
\changed{Due to the monotonicity of~$\Psi$ there holds 
\begin{multline*}
\psi'(\om)(x) = \pair{\psi'(\om),\delta_x}
= \lim_{\tau \to 0^+} \frac{1}{\tau}\left[\psi(\om + \tau \delta_x) - \psi(\om)\right]\\
= \lim_{\tau \to 0^+} \frac{1}{\tau}\left[\Psi(\fish(\om) + \fish_0 + \tau\,\partial S[\hat{q}](x) \partial S[\hat{q}](x)^\top)
- \Psi(\fish(\om)+\fish_0)\right] \leq 0\quad \forall x\in \Om.
\end{multline*}
For a sequence \(\{\,\om_k\,\}\subset \dom_{M^+(\Om)}\psi\) with
\(\om_k \rightharpoonup^* \om\) it follows from the definition of \(\fish\) that
\(\fish(\om_k) \to \fish(\om)\) for \(k\to\infty\). Using~\eqref{eq:gradient_psi}, it now
follows \(\psi'(\om_k) \to \psi'(\om)\) in \(C(\Om)\), which shows the continuity of \(\psi'\). 
}
Statements $4.$, $5.$, and $6.$ can be derived directly from Assumptions
$(\textbf{A2})$, $(\textbf{A4})$, and $(\textbf{A5})$ using
\changed{again the continuity of \(\fish\)}.
\end{proof}

\begin{proposition} \label{prop:existence}
Assume that $\dom_{M^+(\Om)}\psi\neq \emptyset$ and $\beta>0$. Then there exists at least one optimal solution $\optomb$  to \eqref{def:designprop}. Moreover the set of optimal solutions is bounded. If $\,\Psi$ is strictly convex on $\operatorname{PD}(n)$ then the optimal Fisher-information matrix $\fish(\optomb)$ is unique.
\end{proposition}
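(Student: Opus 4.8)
**

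The plan is to establish existence of a minimizer via the direct method of the calculus of variations, exploiting the coercivity provided by the total variation penalty and the weak* lower semicontinuity of the objective established in Proposition~\ref{prop:properties}. First I would observe that the feasible set is nonempty: since $\dom_{M^+(\Om)}\psi \neq \emptyset$, there exists some $\om_0 \in M^+(\Om)$ with finite objective value, so the infimum $F^* = \inf_{\om \in M^+(\Om)} F(\om)$ is a finite real number (nonnegativity of $\psi$ and of $\beta\mnorm{\cdot}$ from Proposition~\ref{prop:properties}.4 gives $F \geq 0$). I would then take a minimizing sequence $\{\om_k\} \subset M^+(\Om)$ with $F(\om_k) \to F^*$.

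The key coercivity step is to extract a uniform bound on $\mnorm{\om_k}$. Since $\psi \geq 0$ on the domain, we have $\beta \mnorm{\om_k} \leq F(\om_k) \leq F^* + 1$ for large $k$, and because $\beta > 0$ this yields $\mnorm{\om_k} \leq (F^*+1)/\beta =: C$ uniformly. By the Banach--Alaoglu theorem, bounded sets in $M(\Om) = C(\Om)^*$ are weak* sequentially precompact (here one uses that $C(\Om)$ is separable, $\Om$ being a compact metric space), so after passing to a subsequence we obtain $\om_k \rightharpoonup^* \optomb$ for some $\optomb \in M(\Om)$. I would next argue that $\optomb \in M^+(\Om)$: the positivity constraint is preserved under weak* limits, since $\pair{y,\optomb} = \lim_k \pair{y,\om_k} \geq 0$ for every nonnegative $y \in C(\Om)$.

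To conclude optimality, I would use weak* lower semicontinuity of the full functional $F$. The total variation norm is weak* lower semicontinuous (a standard fact: $\mnorm{\optomb} = \sup_{\maxnorm{y}\leq 1}\pair{y,\optomb} = \sup_y \lim_k \pair{y,\om_k} \leq \liminf_k \mnorm{\om_k}$), and $\psi$ is weak* lower semicontinuous by Proposition~\ref{prop:properties}.5. Adding these gives $F(\optomb) \leq \liminf_k F(\om_k) = F^*$, so $\optomb$ is a minimizer. For boundedness of the solution set, the same coercivity argument applies: any optimizer $\om$ satisfies $\beta\mnorm{\om} \leq F(\om) = F^*$, so $\mnorm{\om} \leq F^*/\beta$, giving a uniform bound. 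Finally, for uniqueness of $\fish(\optomb)$ under strict convexity of $\Psi$, I would argue by contradiction: if two optimizers $\om_1, \om_2$ had $\fish(\om_1) \neq \fish(\om_2)$, then by convexity of the feasible set the midpoint $\om_m = \tfrac12(\om_1+\om_2)$ is admissible, and strict convexity of $\Psi$ on $\operatorname{PD}(n)$ together with linearity of $\fish$ would force $\psi(\om_m) < \tfrac12(\psi(\om_1)+\psi(\om_2))$ while the norm term satisfies $\mnorm{\om_m} \leq \tfrac12(\mnorm{\om_1}+\mnorm{\om_2})$ by convexity, yielding $F(\om_m) < F^*$, a contradiction.

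I expect the main obstacle to be handling the interplay between the extended-real-valued criterion $\Psi$ (which is $+\infty$ outside $\operatorname{PD}(n)$) and the weak* convergence. Specifically, one must ensure the limit $\optomb$ actually lies in $\dom_{M^+(\Om)}\psi$, i.e.\ that $\fish(\optomb)+\fish_0 \in \operatorname{PD}(n)$; this is delivered precisely by lower semicontinuity of $\psi$ (Assumption~$(\textbf{A4})$, extended in Proposition~\ref{prop:properties}.5), since if the limit were singular the value $\psi(\optomb)=+\infty$ would contradict $F(\optomb) \leq F^* < \infty$. The strict-convexity uniqueness argument requires the mild care that $\fish(\om_1),\fish(\om_2)\in\operatorname{PD}(n)$ so that strict convexity is genuinely applicable on the relevant convex combination; this is guaranteed because optimizers lie in the domain.
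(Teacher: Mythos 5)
Your proof is correct and takes essentially the same approach as the paper, which itself only sketches the argument: the direct method with the coercivity estimate $\mnorm{\om}\leq F(\om)/\beta$, the sequential Banach--Alaoglu theorem, weak* lower semicontinuity of $F$, and a direct contradiction argument via strict convexity of $\Psi$ for uniqueness of the optimal Fisher matrix. Your write-up simply supplies the details the paper omits (weak* closedness of $M^+(\Om)$, separability of $C(\Om)$, and domain membership of the limit), all in the same spirit.
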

\begin{proof}
The proof follows standard arguments, using the direct method in variational calculus,
using the estimate \(\mnorm{\om} \leq F(\om)/ \beta\), the sequential version of the
Banach-Alaoglu theorem, and the facts that \(F\) is proper and weak* lower-semicontinuous.
The boundedness of the set of optimal solutions is another direct consequence.
Additionally, uniqueness of the optimal Fisher information matrix can be deduced from
strict convexity of \(\Psi\) by a direct contradiction argument.
\end{proof}
\changed{
\begin{remark}
The A and D criterion introduced above are strictly convex.
\end{remark}
}
Next we give conditions for the domain of $\psi$ to be non-empty. 

\begin{proposition} \label{prop:existenceexample}
Assume that $\beta>0$ and
\begin{align*}
\R^n
= \operatorname{span} \left(\operatorname{Ran}\fish_0 \cup \left\{\,\partial S[\hat{q}](x)\;\big|\;x \in \Om\,\right\}\right ).
\end{align*}
 Then there exists at least one optimal solution of \eqref{def:designprop}. Furthermore, every design measure $\om \in \dom_{M^+(\Om)}\psi$ consists of at least $n_0=n- \operatorname{rank} \fish_0$ support points.
\end{proposition}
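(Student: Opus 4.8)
The plan is to derive the existence statement from Proposition~\ref{prop:existence} by checking that the spanning hypothesis guarantees $\dom_{M^+(\Om)}\psi \neq \emptyset$, and to obtain the support bound from a rank estimate for $\fish(\om)$.

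First I would exhibit a feasible design. By Proposition~\ref{prop:properties} one has $\om \in \dom_{M^+(\Om)}\psi$ exactly when $\fish(\om) + \fish_0 \in \operatorname{PD}(n)$. For any $\om \in M^+(\Om)$ and $z \in \R^n$,
\[
z^\top(\fish(\om) + \fish_0)z = \pair{(\partial S[\hat{q}]^\top z)^2, \om} + z^\top \fish_0 z
\]
is a sum of two nonnegative terms, so it vanishes if and only if $z \in \ker\fish_0 = (\operatorname{Ran}\fish_0)^\perp$ and (by continuity of the integrand) $\partial S[\hat{q}](x)^\top z = 0$ for every $x \in \supp\om$. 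Since $\R^n$ is finite-dimensional, the spanning hypothesis lets me select finitely many points $x_1,\ldots,x_N \in \Om$ with
\[
\operatorname{span}\left(\operatorname{Ran}\fish_0 \cup \left\{\,\partial S[\hat{q}](x_1),\ldots,\partial S[\hat{q}](x_N)\,\right\}\right) = \R^n .
\]
Taking $\om = \sum_{j=1}^{N}\delta_{x_j} \in M^+(\Om)$, the first identity then forces any $z$ with $z^\top(\fish(\om)+\fish_0)z = 0$ to be orthogonal to a spanning set of $\R^n$, hence $z = 0$. Thus $\fish(\om)+\fish_0 \in \operatorname{PD}(n)$, the domain is nonempty, and existence follows from Proposition~\ref{prop:existence} (which applies since $\beta > 0$).

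For the lower bound, I would start from $\om \in \dom_{M^+(\Om)}\psi$, so $\fish(\om)+\fish_0 \in \operatorname{PD}(n)$ has rank $n$. Subadditivity of the rank gives
\[
n = \operatorname{rank}(\fish(\om)+\fish_0) \leq \operatorname{rank}\fish(\om) + \operatorname{rank}\fish_0 ,
\]
whence $\operatorname{rank}\fish(\om) \geq n - \operatorname{rank}\fish_0 = n_0$. If $\supp\om$ is infinite the claim $\#\supp\om \geq n_0$ is immediate; otherwise a positive Borel measure with finite support is a finite combination $\om = \sum_{j=1}^{m}\lambda_j\delta_{x_j}$ with $m = \#\supp\om$ and $\lambda_j > 0$, so that $\fish(\om) = \sum_{j=1}^{m}\lambda_j\,\partial S[\hat{q}](x_j)\partial S[\hat{q}](x_j)^\top$ is a sum of $m$ rank-one matrices and $\operatorname{rank}\fish(\om) \leq m$. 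Combining the two inequalities yields $m \geq n_0$.

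The main obstacle I anticipate is not a single hard estimate but two points of care: extracting a finite, explicitly usable design from the possibly uncountable family $\{\,\partial S[\hat{q}](x)\mid x\in\Om\,\}$ (justified by finite-dimensionality of $\R^n$), and treating measures of infinite support separately in the rank estimate, together with the observation that a finitely supported positive measure is automatically a nonnegative Dirac combination carrying strictly positive mass at each of its support points.
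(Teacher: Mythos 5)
Your proposal is correct and follows essentially the same route as the paper: construct a finite sum of Dirac measures at points whose sensitivities, together with $\operatorname{Ran}\fish_0$, span $\R^n$, verify positive definiteness of $\fish(\om)+\fish_0$ via the quadratic form, and invoke Proposition~\ref{prop:existence}; the support bound in the paper is the kernel-based contrapositive of your rank-subadditivity estimate. The only cosmetic difference is that the paper selects exactly $n_0 = n-\operatorname{rank}\fish_0$ points rather than an unspecified finite spanning subfamily, which changes nothing in substance.
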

\begin{proof}
According to Proposition \ref{prop:existence} we have to show that there exists an admissible design measure. By assumption we can choose a set of $n- \operatorname{rank} \fish_0$ distinct  points $x_j\in \Om$ such that 
\begin{align*}
\R^n=\operatorname{span}\left (\operatorname{Ran}\fish_0 \cup \left\{\,\partial S[\hat{q}](x_j)\;\big|\;j=1,\dots, n- \operatorname{rank} \fish_0 \right\}\right ).
\end{align*}
Consequently, setting $\om=\sum^{n_0} _{j=1} \delta_{x_j}\in M^+(\Om)$, we obtain 
\begin{align*}
\fish(\om)+\fish_0=\sum_{j=1}^{n_0}  \partial S[\hat{q}](x_j)\partial S[\hat{q}](x_j)^\top+ \fish_0\in \operatorname{PD}(n),
\end{align*}
by straightforward arguments.
 For the last statement we simply observe that for a measure $\om$ with less than $n_0 = n- \operatorname{rank} \fish_0$ support points, the associated information matrix $\fish(\om)+\fish_0$ has a non-trivial kernel. 
\end{proof}
By standard results from convex analysis the following necessary and sufficient optimality conditions can be obtained.
\begin{proposition} \label{prop:firstordernec}
Let $\optomb \in \dom_{M^+(\Om)}\psi$ be given. Then $\optomb$ is an optimal solution to
\eqref{def:designprop} if and only if holds:
\begin{align}\label{subdifferentialinequ}
\langle -\psi'(\optomb), \om-\optomb \rangle + \beta \mnorm{\optomb} \leq \beta \mnorm{\om} \quad \forall \om \in M^+(\Om).
\end{align}
\end{proposition}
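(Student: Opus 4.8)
The plan is to treat this as the standard first-order optimality condition for the convex problem $\min_{\om \in M^+(\Om)} F(\om)$, exploiting that the smooth part $\psi$ is convex and differentiable on $\dom_{M^+(\Om)}\psi$ (Proposition~\ref{prop:properties}, items~3 and~5), with $\psi'(\optomb)\in C(\Om)$ so that the pairing $\langle\psi'(\optomb),\om-\optomb\rangle$ is a well-defined finite $C(\Om)$--$M(\Om)$ duality product, while the remaining term $\beta\mnorm{\cdot}$ together with the constraint $\om\in M^+(\Om)$ is convex. I would prove the two implications of the stated equivalence directly, using only the gradient inequality for convex differentiable functions and the convexity of the total variation norm, rather than invoking an infinite-dimensional subdifferential sum rule (whose constraint qualification would need separate justification).

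For necessity, assume $\optomb$ is optimal and fix an arbitrary $\om \in M^+(\Om)$. The key preliminary observation is that the whole segment $\om_t := (1-t)\optomb + t\om$ lies in $\dom_{M^+(\Om)}\psi$ for every $t \in [0,1)$: by linearity of $\fish$ and items~1 and~2 of Proposition~\ref{prop:properties} one has $\fish(\om_t)+\fish_0 = (1-t)(\fish(\optomb)+\fish_0) + t(\fish(\om)+\fish_0)$, which is positive definite since $\fish(\optomb)+\fish_0 \in \operatorname{PD}(n)$ carries the positive weight $(1-t)$ and $\fish(\om)+\fish_0 \in \operatorname{NND}(n)$. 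Consequently $t \mapsto \psi(\om_t)$ is differentiable at $t=0^+$ with derivative $\langle\psi'(\optomb), \om - \optomb\rangle$. Combining this with the convexity estimate $\mnorm{\om_t} \leq (1-t)\mnorm{\optomb} + t\mnorm{\om}$ and optimality $F(\om_t) \geq F(\optomb)$, I would divide the inequality $0 \leq F(\om_t) - F(\optomb)$ by $t>0$ and let $t \to 0^+$ to obtain $0 \leq \langle\psi'(\optomb), \om-\optomb\rangle + \beta(\mnorm{\om}-\mnorm{\optomb})$, which is exactly~\eqref{subdifferentialinequ}. This reaches all $\om\in M^+(\Om)$, including those outside $\dom\psi$, precisely because $\om_t\in\dom\psi$ for $t<1$ irrespective of whether $\om\in\dom\psi$.

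For sufficiency, assume~\eqref{subdifferentialinequ} holds and let $\om \in M^+(\Om)$ be arbitrary. If $\om \notin \dom_{M^+(\Om)}\psi$ then $F(\om) = +\infty \geq F(\optomb)$ trivially, so I may assume $\om \in \dom_{M^+(\Om)}\psi$. Convexity and differentiability of $\psi$ yield the gradient inequality $\psi(\om) - \psi(\optomb) \geq \langle\psi'(\optomb), \om-\optomb\rangle$, whence $F(\om) - F(\optomb) \geq \langle\psi'(\optomb), \om-\optomb\rangle + \beta(\mnorm{\om}-\mnorm{\optomb})$. Rewriting~\eqref{subdifferentialinequ} as $-\langle\psi'(\optomb),\om-\optomb\rangle \leq \beta(\mnorm{\om}-\mnorm{\optomb})$ and substituting shows the right-hand side is nonnegative, so $F(\om) \geq F(\optomb)$ and $\optomb$ is optimal.

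The argument is essentially routine convex analysis; the only point requiring care is the verification that the connecting segment $\om_t$ remains in $\dom_{M^+(\Om)}\psi$, so that the directional derivative of the smooth part is available and the gradient inequality applies. This is exactly where positive definiteness of $\fish(\optomb)+\fish_0$ enters, and it is what allows the necessity argument to cover every $\om \in M^+(\Om)$ rather than only those in $\dom\psi$.
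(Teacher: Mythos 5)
Your proof is correct, but it takes a different route from the paper. The paper's argument is pure subdifferential calculus: optimality of $\optomb$ is written as $0 \in \partial\bigl(F + I_{\om\geq 0}\bigr)(\optomb)$, the sum rule for convex subdifferentials (valid because $\psi$ is convex and differentiable at $\optomb$) splits this into $-\psi'(\optomb) \in \partial\bigl(\beta\mnorm{\cdot} + I_{\om\geq 0}\bigr)(\optomb)$, and the definition of the subdifferential of $\beta\mnorm{\cdot} + I_{\om\geq 0}$ is then read off as \eqref{subdifferentialinequ}. You instead prove the two implications by hand: necessity via difference quotients along the segments $\om_t = (1-t)\optomb + t\om$, and sufficiency via the gradient inequality for the convex differentiable part. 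The two arguments are close in substance, but yours is more self-contained in a useful way: your observation that $\fish(\om_t)+\fish_0 = (1-t)(\fish(\optomb)+\fish_0) + t(\fish(\om)+\fish_0) \in \operatorname{PD}(n)$ for $t\in[0,1)$ — so the segment stays in $\dom_{M^+(\Om)}\psi$ and the directional derivative exists even when $\om \notin \dom\psi$ — is precisely the fact that, when one unpacks it, justifies the sum rule the paper invokes without further comment (in infinite dimensions an exchange of $\partial(\psi+g)$ for $\psi' + \partial g$ needs either such a differentiability-along-segments argument or a continuity-type qualification). So the paper buys brevity by appealing to standard convex calculus, while your version makes explicit where positive definiteness of $\fish(\optomb)+\fish_0$ enters and handles measures outside $\dom\psi$ cleanly; both are valid proofs of the equivalence.
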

\begin{proof}
Since $F$ is convex, a given $\optomb$ is optimal if and only if
\begin{align*}
0 \in \partial \left( F(\optomb)+ I_{\om\geq 0}(\optomb) \right),
\end{align*}
where the expression on the right denotes the subdifferential of $F+ I_{\om\geq 0}$ at $\optomb$ in $M(\Om)^*$. \changed{Due to the convexity of~$\beta \mnorm{\cdot}+ I_{\om \geq 0}$ and since $\psi$ is convex and differentiable at~$\optomb$} there holds
\begin{align*}
0 \in \partial F(\optomb)
= \psi'(\optomb) + \partial \left(\beta \mnorm{\cdot}+ I_{\om\geq 0}(\cdot)\right)(\optomb),
\end{align*}
which is equivalent to~\eqref{subdifferentialinequ}.
\end{proof}
Since the norm as well as the indicator function are positively homogeneous, the
subdifferential of~$\beta \mnorm{\cdot}+I_{\om \geq 0}$ can be characterized further. This yields an equivalent characterization
of optimality relating the support points of an optimal design to the set of minimizers
of the gradient of $\psi$ in the optimum.
\begin{lemma}
\label{lem:optcond}
Let $\optomb$ be an optimal solution to \eqref{def:designprop}.
Condition \eqref{subdifferentialinequ} is equivalent to
\begin{align}\label{suppcongeneral}
-\psi'(\optomb) \leq \beta, \quad
\operatorname{supp} \optomb \subset \left\{ x\in \Om\;|\;\psi'(\optomb)(x)=-\beta \right\}.
\end{align}
\end{lemma}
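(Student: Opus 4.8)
The plan is to recast the variational inequality~\eqref{subdifferentialinequ} as the statement that $\optomb$ maximizes a fixed linear functional over the cone $M^+(\Om)$, and then to exploit the conic structure together with the continuity and nonpositivity of the gradient established in Proposition~\ref{prop:properties}. For brevity write $p = -\psi'(\optomb) \in C(\Om)$, so that $p \geq 0$ by~\eqref{eq:gradient_psi}.

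First I would use that for every positive measure $\om \in M^+(\Om)$ the total variation norm coincides with the total mass, $\mnorm{\om} = \langle 1, \om\rangle$, where $1$ denotes the constant function. Substituting this into~\eqref{subdifferentialinequ} and cancelling the constant contributions evaluated at $\optomb$, the inequality becomes
\[
\langle p - \beta, \om\rangle \leq \langle p - \beta, \optomb\rangle \quad \forall \om \in M^+(\Om),
\]
i.e.\ $\optomb$ is a maximizer of the linear functional $\om \mapsto \langle p - \beta, \om\rangle$ over $M^+(\Om)$, where $\langle p-\beta,\om\rangle$ abbreviates $\langle p - \beta\, 1, \om\rangle$.

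Next I would invoke the conic structure. For any $\om \in M^+(\Om)$ and $t > 0$ we have $t\om \in M^+(\Om)$, so $t\,\langle p - \beta, \om\rangle \leq \langle p - \beta, \optomb\rangle$; letting $t \to \infty$ forces $\langle p - \beta, \om\rangle \leq 0$ for every $\om \in M^+(\Om)$. Testing this with the Dirac measures $\delta_x$ yields the pointwise bound $p(x) \leq \beta$, that is $-\psi'(\optomb) \leq \beta$; conversely, integrating a nonpositive continuous function against a positive measure recovers the inequality, so the two are equivalent. Taking $\om = 0$ gives $\langle p - \beta, \optomb\rangle \geq 0$, while taking $\om = \optomb$ in the bound just derived gives the reverse inequality, whence $\langle p - \beta, \optomb\rangle = 0$.

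Finally I would extract the support condition, which I expect to be the only delicate point. Since $p - \beta \leq 0$ on all of $\Om$ and $\optomb \geq 0$, the equality $\int_\Om (p - \beta)\,\de\optomb = 0$ forces $p = \beta$ to hold $\optomb$-almost everywhere. The mild obstacle is upgrading this almost-everywhere statement to the claimed topological inclusion: here I would use the continuity of $\psi'(\optomb)$ from Proposition~\ref{prop:properties} so that the set $\{\,x : p(x) < \beta\,\}$ is open with $\optomb$-measure zero, and hence, by the definition of the support as the complement of the union of all open null sets, it is disjoint from $\supp\optomb$; combined with $p \leq \beta$ this gives $\supp\optomb \subset \{\,x : p(x) = \beta\,\}$. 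The converse direction is then immediate by reversing these steps: if $-\psi'(\optomb) \leq \beta$ and $p = \beta$ on $\supp\optomb$, then $\langle p - \beta, \om\rangle \leq 0 = \langle p - \beta, \optomb\rangle$ for all $\om \in M^+(\Om)$, which is precisely the reformulated~\eqref{subdifferentialinequ}.
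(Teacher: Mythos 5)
Your proof is correct, and while it rests on the same underlying mechanism as the paper's proof---positive homogeneity of the regularizer over the cone $M^+(\Om)$---its execution is genuinely different and more self-contained. The paper argues via subdifferential calculus: setting $g=\beta\mnorm{\cdot}+I_{\om\geq 0}$, it uses homogeneity of $g$ to pass from $-\psi'(\optomb)\in\partial g(\optomb)$ to the pair of conditions $-\psi'(\optomb)\in\partial g(0)$ and $\beta\mnorm{\optomb}=\pair{-\psi'(\optomb),\optomb}$, identifies the first with the pointwise bound, and then cites \cite[Proposition~3]{bredies2013inverse} for the passage from complementarity to the support inclusion. You instead exploit that the norm is \emph{linear} on the positive cone, $\mnorm{\om}=\pair{1,\om}$, so that \eqref{subdifferentialinequ} becomes exactly maximization of $\om\mapsto\pair{p-\beta,\om}$, with $p=-\psi'(\optomb)$, over $M^+(\Om)$; your scaling argument ($t\to\infty$) replaces the abstract homogeneity lemma and delivers both $\pair{p-\beta,\om}\leq 0$ for all $\om\in M^+(\Om)$ (hence $p\leq\beta$ by testing with Dirac measures) and the complementarity $\pair{p-\beta,\optomb}=0$; and you then prove the support statement explicitly: $\beta-p\geq 0$ with vanishing $\optomb$-integral is zero $\optomb$-a.e., so by continuity of $\psi'(\optomb)$ the set $\{\,x\in\Om\,|\,p(x)<\beta\,\}$ is an open $\optomb$-null set and hence disjoint from $\supp\optomb$ by the very definition of the support. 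What your route buys is a complete, citation-free treatment of precisely the step the paper delegates to the literature. One implicit fact is worth making explicit: in the converse direction you compute $\pair{p-\beta,\optomb}$ by integrating over $\supp\optomb$ only, i.e.\ you use $\optomb(\Om\setminus\supp\optomb)=0$; this is legitimate because $\Om\subset\R^d$ is second countable, so the complement of the support is a \emph{countable} union of open $\optomb$-null sets, but it does require a word of justification.
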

\begin{proof}
\changed{
We only give a brief sketch of the proof. Set~$g=\beta \mnorm{\cdot}+I_{\om \geq
  0}$. Clearly, there holds~$g(\lambda \om)=\lambda g(\om)$ for all~$\om \in M(\Om)$
and~$\lambda \geq 0$. As a consequence, we obtain from
\(-\psi'(\optomb)\in \partial g(\optomb)\) that
\begin{align*}
-\psi'(\optomb)\in \partial g(0) \quad\text{and}\quad \beta \mnorm{\optomb}= \langle-\psi'(\optomb),\optomb \rangle.
\end{align*}
Due to the non-negativity of~$-\psi(\optomb)$, the first condition can be equivalently expressed as
\begin{align*}
\pair{-\psi'(\optomb),\om} \leq \beta \mnorm{\om}\; \forall \om \in M^+(\Om)
\;\Leftrightarrow\;
-\psi'(\optomb)(x) \leq \beta\;\forall x\in \Om.
\end{align*}
The condition on the support of $\optomb$ in~\eqref{suppcongeneral} now follows with similar arguments as in~\cite[Proposition~3]{bredies2013inverse}.}
\end{proof}
\begin{remark}
For \eqref{def:constraineddesign} a similar optimality condition can be derived. A measure $\optom^K\in \dom_{M^+(\Om)}\psi$ is an optimal solution of \eqref{def:constraineddesign} if and only if
\begin{align} \label{eq:equ1}
  \supp\optom^K \subset 
  \left\{x\in \Om \;\Big|\; \psi'(\optom^K)(x) = \argmin_{x\in \Om} \psi'(\optom^K)(x) \right\},
\end{align}
where the condition on the support of $\optom^K$ is equivalent to 
\begin{align} \label{eq:equ2}
-\langle \psi'(\optom^K),\optom^K \rangle + \min_{x\in \Om} \psi'(\optom^K)(x) \mnorm{\optom^K}=0.
\end{align}
\changed{As for the norm regularized case, we give a short proof of this result. We only derive~\eqref{eq:equ2}. The equivalence to~\eqref{eq:equ1} then again follows as in \cite[Proposition~3]{bredies2013inverse}.
The measure~$\bar{\om}^K \in \dom_{M^+(\Om)}\psi$ is optimal for~\eqref{def:constraineddesign} if and only if~$\mnorm{\bar{\om}^K}\leq K$ and
\begin{align*}
\langle \psi'(\bar{\om}^K), \bar{\om}^K \rangle \leq \langle \psi'(\bar{\om}^K), \om \rangle \quad \forall \om \in M^+(\Om),~\mnorm{\om} \leq K.
\end{align*}
Clearly, since~$\psi'(\bar{\om}^K)$ is non-positive, this holds if and only if
\begin{align*}
\pair{\psi'(\bar{\om}^K), \bar{\om}^K}
= \inf_{\substack{\om \in M^+(\Om) \\ \mnorm{\om}\leq K}} \pair{\psi'(\bar{\om}^K), \om}
= \min_{x \in \Om} \psi'(\bar{\om}^K)(x) \mnorm{\bar{\om}^K}.
\end{align*}
This finishes the proof. Moreover, if~$ \min_{x\in \Om} \psi'(\optom^K)(x)\neq 0$, we have \(\mnorm{\bar{\om}^K} = K\) and optimality of~$\bar{\om}^K$ is equivalent to 
\begin{align} \label{eq:equ3}
0=\langle \psi'(\bar{\om}^K),\bar{\om}^K \rangle - \min_{x\in\Om} \psi'(\bar{\om}^K)(x) K \leq \langle \psi'(\om),\om \rangle - \min_{x\in\Om} \psi'(\om)(x) K 
\end{align}
for all~$\om \in \dom_{M^+(\Om)}\psi$ with~$\mnorm{\om}\leq K$.
For~$K=1$ the three statements in~\eqref{eq:equ1},~\eqref{eq:equ2} and~\eqref{eq:equ3} yield the well-known Kiefer-Wolfowitz equivalence theorem; see
\cite{kiefer1959optimum,kiefer1974general} and \cite[Theorem~3.2]{ucinski2004optimal}.}
\end{remark}
Since the Fisher-operator $\fish$ is a finite rank operator, uniqueness of the optimal
solution is usually not guaranteed.
However, the existence of at least one solution with the practically desired sparsity
structure is addressed in the following theorem.
\begin{theorem} \label{thm:existence of finite}
Let $\om \in M^+(\Om)$ be given. Then there exists $\tilde{\om} \in M^+(\Om)$ with
\begin{align*}
\fish(\om)=\fish(\tilde{\om}),\quad \mnorm{\tilde{\om}}\leq \mnorm{\om},
\quad \# \supp \tilde{\om} \leq n(n+1)/2.
\end{align*}
Additionally, if there exists an optimal solution to \eqref{def:designprop}, then there exists an optimal solution $\optomb$ with $\# \supp \optomb \leq n(n+1)/2$.
\end{theorem}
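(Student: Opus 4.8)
The plan is to use that the Fisher matrix is a matrix-valued integral with values in the finite-dimensional space $\operatorname{Sym}(n)$, so that a Carathéodory-type reduction applies. Writing $R(x) = \sensv(x)\sensv(x)^\top \in \operatorname{NND}(n)$, Assumption~\ref{ass:Existenceofstate} makes $x \mapsto R(x)$ continuous, and by definition $\fish(\om) = \int_\Om R(x)\de\om(x)$, while $\mnorm{\om} = \om(\Om)$ because $\om \geq 0$. As a closed subset of the compact set $\bar\Omega$, the set $\Om$ is compact, so $R(\Om)$ is a compact subset of $\operatorname{Sym}(n)$, a vector space of dimension $\upper$. Discarding the trivial case $\om = 0$, I would pass to the probability measure $\mu = \om/\mnorm{\om}$; then $\fish(\om)/\mnorm{\om} = \int_\Om R\de\mu$ is the barycenter of $\mu$ and, by a standard separation argument, lies in the compact convex set $\operatorname{conv}(R(\Om))$.

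First I would invoke Carathéodory's theorem in $\operatorname{Sym}(n) \cong \R^{\upper}$: this barycenter is a convex combination of at most $\upper+1$ elements of $R(\Om)$, which after rescaling by $\mnorm{\om}$ furnishes points $x_1,\dots,x_m \in \Om$ and weights $\lambda_j > 0$ with
\[
  \fish(\om) = \sum_{j=1}^m \lambda_j R(x_j), \qquad \sum_{j=1}^m \lambda_j = \mnorm{\om}, \qquad m \leq \upper + 1 .
\]
This matches $\fish(\om)$ with the exact mass, but carries one support point too many.

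The remaining reduction step is the main obstacle: the plain conic version of Carathéodory would give $\upper$ points directly but loses control of the total mass, whereas here I must drop a point while keeping $\fish$ fixed \emph{and} not increasing $\mnorm{\cdot}$. As long as $m > \upper = \dim\operatorname{Sym}(n)$, the matrices $R(x_1),\dots,R(x_m)$ are linearly dependent, so there are coefficients $c_j$, not all zero, with $\sum_j c_j R(x_j) = 0$. The perturbation $\lambda_j(t) = \lambda_j - t c_j$ keeps $\sum_j \lambda_j(t) R(x_j) = \fish(\om)$ for every $t$, while the mass becomes $\mnorm{\om} - t\sum_j c_j$. I would choose the sign of $t$ so that $t\sum_j c_j \geq 0$ and move $t$ in that direction until the first weight reaches $0$; this occurs at a finite value because some $c_j$ shares the sign of $t$. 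The result is a representation with $m-1$ points, nonnegative weights, and mass at most $\mnorm{\om}$. Iterating down to $m \leq \upper$ and setting $\tilde\om = \sum_{j} \lambda_j \delta_{x_j}$ proves the first claim.

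For the addendum I would apply this construction to an optimal solution $\optomb$. Since $\fish(\tilde\om) = \fish(\optomb)$, the value $\psi(\tilde\om) = \Psi(\fish(\tilde\om)+\fish_0)$ equals $\psi(\optomb)$ (in particular $\tilde\om \in \dom_{M^+(\Om)}\psi$), and $\mnorm{\tilde\om} \leq \mnorm{\optomb}$ yields $F(\tilde\om) \leq F(\optomb)$. As $\optomb$ is optimal and $\tilde\om$ admissible, $\tilde\om$ is itself an optimal solution, now with $\#\supp\tilde\om \leq \upper$.
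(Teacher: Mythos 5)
Your argument is correct, and it reaches the conclusion by a genuinely different route than the paper. The paper handles a general (not necessarily atomic) $\om \in M^+(\Om)$ by approximation: it takes finitely supported measures $\om_k \rightharpoonup^* \om$ with $\mnorm{\om_k} \leq \mnorm{\om}$, applies its Carath\'eodory-type reduction (Lemma~\ref{lem:repres}) to each $\om_k$ to enforce at most $n(n+1)/2$ atoms, and then extracts a weak* limit, relying on the weak* closedness of the set $M^+_m(\Om)$ of finitely supported measures (Lemma~\ref{lem:weakclos}) and on the convergence of $\fish(\om_k)$ and of the masses along the sequence. You avoid the limit process entirely: normalizing $\om$ to a probability measure, you identify $\fish(\om)/\mnorm{\om}$ as a barycenter and place it in $\operatorname{conv}(R(\Om))$, where $R(x)=\sensv(x)\sensv(x)^\top$, using compactness of $\Om$ (hence of $R(\Om)$ and of its convex hull in $\operatorname{Sym}(n)\simeq\R^{n(n+1)/2}$) together with a separation argument; the classical convex Carath\'eodory theorem then yields $n(n+1)/2+1$ atoms carrying exactly the mass $\mnorm{\om}$, and your final dependence-reduction step --- choose the sign of $t$ so that $t\sum_j c_j \geq 0$ and drive the first weight to zero --- is precisely the mechanism of the paper's Lemma~\ref{lem:repres}, removing the surplus atom without increasing the mass. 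Your route is shorter and more self-contained: no approximating sequence, no compactness extraction, no closedness lemma; and you correctly identify why the conic Carath\'eodory theorem alone would not suffice (it loses control of the total mass). What the paper's decomposition buys is reusability: Lemma~\ref{lem:weakclos} is invoked again in Proposition~\ref{prop:sparsifying} to show that weak* accumulation points of the algorithm iterates remain sparse, and the iterated reduction of Lemma~\ref{lem:repres} is exactly the implementable sparsification procedure of Algorithm~\ref{alg:postprocessing}, so the paper's two lemmas double as algorithmic groundwork. Your treatment of the addendum (equal Fisher matrices give equal $\psi$, and non-increased mass gives $F(\tilde{\om}) \leq F(\optomb)$, so $\tilde{\om}$ is optimal) coincides with the paper's.
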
 
In order to prove this statement, we first provide some auxiliary results. For $m\in \N$ define the cone of measures supported on at most~$m$ points as
\begin{align*}
M_m^+(\Om) =
\changed{\left\{ \omega \in M^+(\Om) \;\big|\; \#\supp \omega \leq m \right\}}.
\end{align*}
\begin{lemma} \label{lem:weakclos}
Let~$m\in \N$ be given. The set~$M^+_m(\Om)$ is weak* closed.
\end{lemma}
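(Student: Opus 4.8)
The plan is to prove the stronger topological statement that the complement $M(\Om)\setminus M^+_m(\Om)$ is weak* open. This route avoids the difficulties caused by $M^+_m(\Om)$ being neither convex nor bounded, so that neither the Krein--Smulian theorem nor a reduction to a weak* metrizable ball applies directly. Throughout I use that $\Om$ is compact, being a closed subset of the bounded set $\bar\Omega$.

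So let $\om \in M(\Om)$ with $\om \notin M^+_m(\Om)$; I would produce a weak* open neighborhood of $\om$ disjoint from $M^+_m(\Om)$. Two cases arise. If $\om \notin M^+(\Om)$, then by definition of $M^+(\Om)$ there is a nonnegative $y \in C(\Om)$ with $\pair{y,\om} < 0$, and the set $\{\,\nu \in M(\Om) : \pair{y,\nu} < 0\,\}$ is a weak* open neighborhood of $\om$ disjoint from $M^+(\Om) \supset M^+_m(\Om)$. The substantive case is $\om \in M^+(\Om)$ with $\#\supp \om \geq m+1$.

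In that case I would first choose $m+1$ distinct points $z_0,\dots,z_m \in \supp \om$ together with pairwise disjoint open neighborhoods $U_0,\dots,U_m \subset \Om$. For each $i$ the definition of the support gives $\om(V) > 0$ for every open $V \ni z_i$; fixing such a $V_i \subset U_i$ I pick $\phi_i \in C(\Om)$ with $0 \le \phi_i \le 1$, $\phi_i \equiv 1$ on $V_i$ and $\supp \phi_i \subset U_i$, so that $\pair{\phi_i,\om} \geq \om(V_i) > 0$ by positivity of $\om$. Then $W = \{\,\nu \in M(\Om) : \pair{\phi_i,\nu} > 0,\ i=0,\dots,m\,\}$ is a finite intersection of preimages of $(0,\infty)$ under the weak*-continuous functionals $\nu \mapsto \pair{\phi_i,\nu}$, hence weak* open, and it contains $\om$. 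For any $\nu \in W$ and each $i$, the condition $\pair{\phi_i,\nu} \neq 0$ forces $\nu$ to be nonzero on $U_i$ and thus $\supp \nu \cap U_i \neq \emptyset$; since the $U_i$ are disjoint, $\#\supp \nu \geq m+1$, so $\nu \notin M^+_m(\Om)$. Hence $W$ is disjoint from $M^+_m(\Om)$, completing the argument.

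The only delicate point, and the step I would take most care over, is the construction in the third paragraph: one must extract the bump functions $\phi_i$ from the mere knowledge $z_i \in \supp \om$ in such a way as to guarantee the strict positivity $\pair{\phi_i,\om} > 0$, since it is exactly this that localizes the support of every nearby measure inside the $U_i$. Everything else is a routine separation argument. I note in passing that a purely sequential proof is also available, extracting convergent subsequences of the at most $m$ points and weights of a sequence $\om_k \rightharpoonup^* \om$; however, that only yields sequential weak* closedness, whereas the neighborhood argument above gives genuine topological closedness as stated.
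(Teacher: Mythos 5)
Your proof is correct, but it takes a genuinely different route from the paper's. The paper argues sequentially: given $\om_k=\sum_{j=1}^m \lambda^k_j\delta_{x^k_j}\rightharpoonup^*\bar\om$, it uses the compactness of $\Om$ and the norm bound $\mnorm{\om_k}\le c$ (valid for weak* convergent sequences) to extract a subsequence with $(\mathbf{x}^k,\lambda^k)\to(\mathbf{x},\lambda)$ in $\Om^m\times\R^m_+$, and then identifies $\bar\om=\sum_j\lambda_j\delta_{x_j}\in M^+_m(\Om)$ by testing against arbitrary $\varphi\in C(\Om)$. You instead show the complement is weak* open: a nonnegative separating function disposes of measures outside $M^+(\Om)$, and bump functions $\phi_i$ supported in disjoint neighborhoods of $m+1$ support points force every measure in your neighborhood $W$ to have at least $m+1$ support points. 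The trade-off is real: the paper's argument is short and exhibits the limit explicitly, but as written it establishes only \emph{sequential} weak* closedness, and since $M^+_m(\Om)$ is neither convex nor bounded this does not formally upgrade to closedness in the weak* topology (as you observe, neither Krein--Smulian nor reduction to a metrizable ball applies); sequential closedness is, however, exactly what is used downstream, since the lemma is only ever applied to weak* limits of bounded sequences in Theorem~\ref{thm:existence of finite} and Proposition~\ref{prop:sparsifying}. Your argument proves the lemma in the stated topological sense and needs no compactness extraction, at the price of the standard fact that a regular Borel measure vanishes on the open complement of its support --- the step you rightly flag as delicate, and which holds here because $\Om$ is compact metric, hence second countable. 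One simplification available to you: instead of passing through $\supp\nu$ for a general $\nu\in W$, test directly against $\nu=\sum_{j=1}^{m'}\lambda_j\delta_{x_j}\in M^+_m(\Om)$ with $m'\le m$; then $\pair{\phi_i,\nu}>0$ forces some atom $x_j\in U_i$, and the pigeonhole principle with $m+1$ disjoint sets and at most $m$ atoms already yields $W\cap M^+_m(\Om)=\emptyset$, avoiding the support fact altogether.
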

\begin{proof}
Let a weak* convergent subsequence~$\{\om_k\}\subset M^+_m(\Om)$ with~$\om_k
\rightharpoonup^* \bar{\om} \in M^+(\Om)$ be given. For each~$k\in \N$ there
exist~$\lambda^k_j \in \R_+$ and~$x^k_j \in \Om$, \(j =1,\ldots,m\) with
\begin{align*}
\om_k= \sum^m_{j=1} \lambda^k_j \delta_{x^k_j} \quad \text{and} \quad \mnorm{\om_k}= \sum_{j=1} \lambda^k_j \leq c
\end{align*}
with~$c>0$ independent of~$k\in \N$. Introducing~$\lambda^k=(\lambda^k_1, \dots, \lambda^k_m)^\top \in \R^m_+$ and~$\mathbf{x}^k=(x_1^k, \dots,x_m^k)^\top \in \Om^m$ there exist a convergent subsequence of $\{(\mathbf{x}^k, \lambda^k)\}$, denoted by the same symbol, as well as elements with~$(\mathbf{x}^k,\lambda^k)\rightarrow (\mathbf{x},\lambda)$. Define the measure
\begin{align*}
\om= \sum^m_{j=1} \lambda_j \delta_{x_j} \in M^+_m(\Om) \quad \text{where}\quad \mathbf{x}=(x_1,\dots,x_m)^\top,~\lambda=(\lambda_1,\dots, \lambda_m)^\top.
\end{align*}
Given~$\varphi \in C(\Om)$ there holds~$\varphi(x^k_j)\rightarrow \varphi(x_j)$ as well
as~$\lambda_j^k \rightarrow \lambda_j$,~$j=1,\dots,m$. Since \(\varphi\) is arbitrary, we 
conclude~$\pair{\varphi, \om^k} \rightarrow \pair{\varphi, \om}$.
Thus there holds~$\om = \bar{\om}$ since the weak* limit is unique. This proves the statement.
\end{proof}
\changed{We require the following lemma, which is a variant of the
  Carath{\'e}odory lemma.}
\begin{lemma} \label{lem:repres}
Let~$\om \in M^+_m(\Om)$ for some~$m \in \N$ be given. Furthermore assume that the set~$\{\,\fish(x)\;|\;x \in \supp \om\,\}$ is linearly dependent. Then there exists $\widetilde{\om} \in M^+(\Om)$ with
\begin{align} \label{eq:sparseonce}
\fish(\om)=\fish(\widetilde{\om}),\quad \mnorm{\widetilde{\om}}\leq \mnorm{\om},
\quad \# \supp \widetilde{\om} \leq \# \supp \om -1 .
\end{align}
In particular, given any measure~$\om \in M^+_m(\Om)$,~$m\in \N$, there is~$\widetilde{\om}\in M^+(\Om)$ fulfilling~$\fish(\om)=\fish(\widetilde{\om}),~\mnorm{\widetilde{\om}}\leq \mnorm{\om}$ and~$\#\supp \om \leq n(n+1)/2$.
\end{lemma}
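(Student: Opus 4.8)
The plan is to prove the first claim by a Carath\'eodory-type elimination argument in the finite-dimensional space $\operatorname{Sym}(n)$, and then to deduce the ``in particular'' statement by iterating it. I start from the observation that, writing $\om$ through its finitely many support points as $\om = \summ{p}{j}\lambda_j\delta_{x_j}$ with $p=\#\supp\om$, distinct $x_j$, and $\lambda_j > 0$, the Fisher matrix decomposes as $\fish(\om) = \summ{p}{j}\lambda_j\fish(x_j)$, where $\fish(x_j) := \fish(\delta_{x_j}) = \sensv(x_j)\sensv(x_j)^\top$ is a rank-one element of $\operatorname{Sym}(n)$. Hence $\fish(\om)$ is a nonnegative combination of the $\fish(x_j)$, and a linear dependence among them is precisely the slack needed to eliminate a support point.

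Under the dependence hypothesis I fix a nontrivial tuple $(\alpha_1,\dots,\alpha_p)$ with $\summ{p}{j}\alpha_j\fish(x_j)=0$ and, replacing $\alpha$ by $-\alpha$ if necessary, arrange $\summ{p}{j}\alpha_j \geq 0$; together with nontriviality this forces at least one $\alpha_j > 0$. I then consider the family $\om_t = \summ{p}{j}(\lambda_j - t\alpha_j)\delta_{x_j}$ and set $t^* = \min\{\,\lambda_j/\alpha_j \mid \alpha_j > 0\,\}$, attained at some index $j_0$. For $t \in [0,t^*]$ all coefficients $\lambda_j - t\alpha_j$ remain nonnegative (the constraint binds only where $\alpha_j > 0$, while coefficients with $\alpha_j \leq 0$ merely grow), so $\om_t \in M^+(\Om)$; moreover $\fish(\om_t) = \fish(\om) - t\summ{p}{j}\alpha_j\fish(x_j) = \fish(\om)$ and $\mnorm{\om_t} = \mnorm{\om} - t\summ{p}{j}\alpha_j \leq \mnorm{\om}$ by the sign normalization. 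At $t = t^*$ the coefficient at $j_0$ vanishes, so $\#\supp\om_{t^*} \leq \#\supp\om - 1$, and $\widetilde\om := \om_{t^*}$ satisfies \eqref{eq:sparseonce}.

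For the final assertion I iterate this construction. Since $\dim\operatorname{Sym}(n) = n(n+1)/2$, whenever $\#\supp\om > n(n+1)/2$ the family $\{\,\fish(x) \mid x \in \supp\om\,\}$ is automatically linearly dependent, so the first part applies and strips off at least one support point while preserving $\fish$ and not increasing the total mass. After finitely many steps this yields a measure $\widetilde\om$ with $\fish(\widetilde\om) = \fish(\om)$, $\mnorm{\widetilde\om} \leq \mnorm{\om}$, and $\#\supp\widetilde\om \leq n(n+1)/2$.

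I expect the only genuinely delicate point to be the bookkeeping in the elimination step, where two demands compete: the sign of the dependence relation must be fixed so that the total mass does not increase, yet one must still be able to push a coefficient to zero from the nonnegative side. The normalization $\summ{p}{j}\alpha_j \geq 0$ reconciles both at once, since it delivers the mass estimate and simultaneously guarantees a positive $\alpha_{j_0}$, hence a well-defined, strictly positive $t^*$. The remaining verifications---that $\om_{t^*}$ is a positive measure and has the asserted support size---are then routine.
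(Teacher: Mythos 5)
Your proof is correct and follows essentially the same route as the paper: the same Carath\'eodory-type elimination with a nontrivial dependence relation normalized so that $\sum_j \alpha_j \geq 0$, removal of a support point via the ratio rule (your $t^* = \min\{\lambda_j/\alpha_j \mid \alpha_j>0\}$ is exactly the reciprocal of the paper's $\mu = \max_j \gamma_j/\lambda_j$), the identical mass estimate, and induction on the support size using $\dim\operatorname{Sym}(n)=n(n+1)/2$ for the final claim. No gaps.
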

\begin{proof}
Let~$\om=\sum^m_{j=1} \lambda_j\delta_{x_j}$ be given.
\changed{Without restriction, assume that~$\lambda_j>0$ for~$j=1,\dots,m$.
Define~$\fish_j=\fish(\delta_{x_j}) \in \operatorname{Sym}(n)$.
By assumption, the set~$\{\fish_j\}_{j=1}^m$ is linearly dependent.
Thus, we find a nontrivial solution~$\gamma$ of the system of equations
\(\sum_{j=1,\ldots,m} \gamma_j \fish_j = 0\). By possibly taking the negative of
\(\gamma\) we can ensure that \(\sum_{j=1,\ldots,m} \gamma_j \geq 0\).}
Set
\[
\mu = \max_{n=1,\ldots,m} \frac{\gamma_j}{{\lambda_j}} > 0.
\]
We define
\[
\widetilde{\om} = \om - \frac{1}{\mu} \sum_{j=1}^{m} \gamma_j \delta_{x_j}
= \sum_{j=1}^{m} \left(1-\frac{\gamma_j}{\mu {\lambda_j}}\right) \lambda_j\delta_{x_j}.
\]
The coefficients of the new measure
\(\widetilde{\om}=\sum^m_{j=1}\widetilde{\lambda}_j \delta_{x_j}\) are given as \(\widetilde{\lambda}_j =
[1-\gamma_j/(\mu {\lambda_j})]\lambda_j \in \R_+\)
since $\gamma_j/\mu \leq {\lambda_j}$. Moreover, we have $\fish(\om)=\fish(\widetilde{\om})$ as well as
\begin{align*}
\mnorm{\widetilde{\om}}
 = \mnorm{\om} - \sum_{j=1,\ldots,m} \gamma_j/\mu
 \leq \mnorm{\om}.
\end{align*}
The proof of~\eqref{eq:sparseonce} is finished with the observation that
\[
  \widetilde{\lambda}_{\hat{\jmath}} = 0 \quad\text{for }
  \hat{\jmath} \in \argmax_{j=1,\ldots,m} \frac{\gamma_j}{{\lambda_j}}.
\]
For the last statement, we recall that for any~$\om \in M(\Om)$ it holds $\fish(\om)\in \operatorname{Sym}(n) \simeq \R^{n(n+1)/2}$. 
Thus, if~$\#\supp \om >n (n+1)/2$, the set~$\{\,\fish(x)\;|\;x \in \supp \om\,\}$ is linearly dependent.
The result can now be proven by induction over the number of support points.
\end{proof}
\begin{proof}[Proof of Theorem~\ref{thm:existence of finite}]
Let~$\om \in M^+(\Om)$ be given. There exist sequences~$\{\om_k\}\subset M^+(\Om)$
with~$\#\supp \om_k < \infty$, $\om_k \rightharpoonup^* \om$, and~$\mnorm{\om_k} \leq
\mnorm{\om}$. Invoking Lemma~\ref{lem:repres} now yields the existence of a
sequence~$\{\widetilde{\om}^k\}\subset M^+_{\widetilde{m}}(\Om)$,
where~$\widetilde{m}=n(n+1)/2$, with $\fish(\widetilde{\om}^k)=\fish(\om^k)$ and~$\mnorm{\widetilde{\om}^k}\leq \mnorm{\om^k}$ for all~$k\in\N$. Consequently~$\widetilde{\om}^k$ admits a weak* convergent subsequence, denoted by the same symbol, with limit~$\widetilde{\omega}$. Moreover,
\begin{align*}
\mnorm{\widetilde{\om}_k}= \langle 1, \widetilde{\om}^k \rangle \rightarrow \langle 1, \widetilde{\om} \rangle= \mnorm{\widetilde{\om}}.
\end{align*}
Similarly there holds~$\lim_{k \rightarrow \infty} \mnorm{\om_k}=\mnorm{\om}$.
Combining these observations we obtain
\begin{align*}
\fish(\om)= \lim_{k \rightarrow \infty} \fish(\om^k)= \lim_{k \rightarrow \infty} \fish(\widetilde{\om}_k)= \fish(\widetilde{\om}), \quad \mnorm{\widetilde{\om}} \leq  \mnorm{\om}.
\end{align*}
Since~$\widetilde{\omega}\in M^+_{\widetilde{m}}(\Om)$ (see Lemma~\ref{lem:weakclos}) and~$F(\widetilde{\om})\leq F(\om)$, this finishes the proof.
\end{proof}
In the last part of this section we will further discuss structural properties of
solutions to \eqref{def:designprop}, mainly focusing on their connection to
\eqref{def:constraineddesign} and their behaviour for $\beta \rightarrow \infty$.
\changed{
In the following, we call a criterion \(\Psi\) strictly monotone with respect to the L{\"o}wner ordering, if
\begin{align*}
 N_2 - N_1 \in \operatorname{PD}(n) \;\Rightarrow\; \Psi(N_1) > \Psi(N_2) \quad\forall N_1,N_2 \in \operatorname{PD}(n).
\end{align*}
In particular, this is true for the A and D criterion introduced above.
}
\begin{proposition} \label{prop:equivalencedesigns}
The problems \eqref{def:constraineddesign} and \eqref{def:designprop} are equivalent in
the following sense:
\changed{Given $K>0$, there exists a $\beta(K) \geq 0$ (not necessarily unique),
  such that any optimal solution to~\eqref{def:constraineddesign} is
  an optimal solution to~\eqref{def:designprop} and vice-versa.}
 
Furthermore, assuming that $\Psi$ is strictly monotone with respect to the L{\"o}wner ordering,
we additionally obtain the following:
\begin{enumerate}
\item We have $\mnorm{\bar{\om}^K}=K$ for each optimal solution $\bar{\om}^K$ to \eqref{def:constraineddesign}.
\item There exists a uniquely defined function 
\begin{align*}
\beta\colon \R_+ \setminus \{0\} \to \R_+ \setminus \{0\}, 
\quad K \mapsto \beta(K),
\end{align*} 
such that each optimal solution $\bar{\om}^K$ to \eqref{def:constraineddesign} is a minimizer of $(P_{\beta(K)})$.
\end{enumerate}
\end{proposition}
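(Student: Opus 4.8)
The claim has two parts: an equivalence without further assumptions on $\Psi$, and a refined statement under strict monotonicity. I would attack them separately, relying throughout on the optimality characterizations already derived in Lemma~\ref{lem:optcond} for $(P_\beta)$ and in the Remark following it for $(P^K)$.

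For the first (unconditional) part, the plan is to fix $K>0$ and an optimal solution $\bar\om^K$ of~\eqref{def:constraineddesign}, which exists by the direct-method argument underlying Proposition~\ref{prop:existence} (applied on the weak* compact feasible ball $\{\om\in M^+(\Om)\mid\mnorm{\om}\le K\}$). I would set $\beta(K)=-\min_{x\in\Om}\psi'(\bar\om^K)(x)\ge 0$, motivated by the fact that this is the natural Lagrange multiplier associated with the constraint $\mnorm{\om}\le K$. The key computation is to verify that the optimality condition~\eqref{eq:equ1}--\eqref{eq:equ2} for $(P^K)$ is, with this choice of $\beta$, exactly the optimality condition~\eqref{suppcongeneral} for $(P_{\beta(K)})$: the support condition $\supp\bar\om^K\subset\{x\mid\psi'(\bar\om^K)(x)=\min_x\psi'(\bar\om^K)\}$ becomes $\supp\bar\om^K\subset\{x\mid\psi'(\bar\om^K)(x)=-\beta(K)\}$, and the inequality $-\psi'(\bar\om^K)\le\beta(K)$ holds by definition of the minimum. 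Since both problems are convex and these first-order conditions are necessary \emph{and} sufficient, any solution of one is a solution of the other. The degenerate case $\beta(K)=0$ (when $\psi'(\bar\om^K)\equiv 0$ cannot happen because $\psi'$ is the negative of a sum of rank-one pieces, but $\min\psi'=0$ can occur at the unconstrained minimum) must be handled by noting that then $\bar\om^K$ already minimizes $\psi$ over all of $M^+(\Om)$, so it solves $(P_0)$.

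For the refined part under strict monotonicity, statement~(1) is the assertion that the budget is always saturated. I would argue by contradiction: if $\mnorm{\bar\om^K}<K$, then removing the active-constraint requirement, strict monotonicity forces $\min_{x}\psi'(\bar\om^K)(x)<0$ strictly (otherwise adding any Dirac mass $\tau\delta_x$ strictly increases $\fish$ in the L\"owner order and hence strictly decreases $\psi$, contradicting optimality while slack remains in the budget). With $\min\psi'<0$ the Remark's relation~\eqref{eq:equ2} forces $\mnorm{\bar\om^K}=K$, a contradiction. Statement~(2) is the uniqueness and well-definedness of $K\mapsto\beta(K)$: here the main work is to show the map is single-valued and does not depend on the chosen optimizer. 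Since by part~(1) and uniqueness of the optimal Fisher matrix (Proposition~\ref{prop:existence}, using strict convexity, which follows from strict monotonicity for the A and D criteria) the value $\psi'(\bar\om^K)=\fish^*(\Psi'(\fish(\bar\om^K)+\fish_0))$ is the \emph{same} continuous function for every optimizer at level $K$, the quantity $\beta(K)=-\min_x\psi'(\bar\om^K)(x)$ is intrinsically determined, giving a well-defined function; its strict positivity follows from statement~(1) via~\eqref{eq:equ2}.

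The main obstacle I anticipate is precisely the well-definedness in statement~(2): one needs that $\psi'(\bar\om^K)$, and thus its minimum, is independent of which optimal $\bar\om^K$ one picks. This hinges on uniqueness of the optimal Fisher-information matrix $\fish(\bar\om^K)$, not of the measure itself (which is generally non-unique), so I would need strict convexity of $\Psi$ to pin down $\fish(\bar\om^K)$ and thereby $\Psi'(\fish(\bar\om^K)+\fish_0)$ uniquely. The secondary subtlety is reconciling the two parametrizations when prior information $\fish_0\neq 0$ is present, where the scalar-rescaling shortcut available for $\fish_0=0$ is unavailable and one must rely genuinely on the multiplier $\beta(K)$ rather than on homogeneity.
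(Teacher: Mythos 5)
Your overall route---verifying the first-order systems of Lemma~\ref{lem:optcond} and of the remark (conditions \eqref{eq:equ1}--\eqref{eq:equ2}) with the explicit multiplier $\beta(K)=-\min_{x\in\Om}\psi'(\bar\om^K)(x)$---is a legitimate alternative to the paper's argument, which instead produces $\beta(K)$ abstractly as a Lagrange multiplier via strong duality (Slater's condition) and the saddle-point structure of the Lagrangian. As written, however, your proposal has three genuine gaps. First, in statement~(1) your use of strict monotonicity fails: adding a Dirac mass $\tau\delta_x$ changes the information matrix by the rank-one increment $\tau\,\partial S[\hat{q}](x)\partial S[\hat{q}](x)^\top$, which is never positive definite for $n\geq 2$, so strict monotonicity---which in this paper is defined only for differences in $\operatorname{PD}(n)$---yields no strict decrease of $\psi$ and hence no contradiction. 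The paper avoids this by scaling the whole measure, $\tilde{\om}=(K/\mnorm{\bar\om^K})\,\bar\om^K$, so that the increment $(K/\mnorm{\bar\om^K}-1)\,\fish(\bar\om^K)$ is proportional to the full information matrix rather than rank one.

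Second, and most seriously, your well-definedness argument in statement~(2) invokes strict \emph{convexity} of $\Psi$ (through uniqueness of the optimal Fisher matrix, Proposition~\ref{prop:existence}), but the proposition assumes only strict \emph{monotonicity}; you concede this yourself by restricting to the A and D criteria, so the stated result is not proved. The paper's uniqueness argument needs no cross-solution information at all: if $\beta_1(K)<\beta_2(K)$ both satisfied \eqref{suppcongeneral} at one fixed optimizer $\bar\om^K$, then $-\psi'(\bar\om^K)\leq\beta_1(K)<\beta_2(K)$ everywhere forces $\supp\bar\om^K\subset\{\,x\;|\;-\psi'(\bar\om^K)(x)=\beta_2(K)\,\}=\emptyset$, i.e.\ $\bar\om^K=0$, contradicting statement~(1). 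Third, in the unconditional first claim your matching of optimality conditions only shows that the particular $\bar\om^K$ you started from solves $(P_{\beta(K)})$: it shows neither that \emph{every} solution of \eqref{def:constraineddesign} solves $(P_{\beta(K)})$ for that same $\beta(K)$ (without strict convexity the gradient $\psi'$, hence your $\beta(K)$, is a priori solution-dependent), nor the ``vice-versa'' direction, since $(P_{\beta(K)})$-optimality of some $\om'$ carries no information about the constraint $\mnorm{\om'}\leq K$, which is part of optimality for \eqref{def:constraineddesign}. The paper's duality argument supplies exactly these missing pieces: the saddle-point set of the Lagrangian is the product of the primal and the dual solution sets, so a single dual multiplier pairs with all primal solutions simultaneously.
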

\begin{proof}
\changed{
We will derive the first result as a consequence of Lagrange duality. Define the
Lagrangian as
\begin{align*}
L \colon M(\Om) \times \R \rightarrow \R \cup \{\,+\infty\,\},
  \quad L(\om, \beta) = \psi(\om) + I_{\om\geq 0}(\om) + \beta \left(\mnorm{\om} - K\right).
\end{align*}
Since a Slater condition holds for~\eqref{def:constraineddesign}  -- there exists a
\(\om \geq 0\) with \(\psi(\om) < +\infty\) and \(\mnorm{\om} < K\) -- the following
strong duality holds (see \cite[Theorem~2.165]{bonnans2000perturbation}):
\[
\min \eqref{def:constraineddesign}
 = \min_{u \in M(\Om)} \sup_{\beta \geq 0} L(u,\beta)
 = \max_{\beta \geq 0} \inf_{u \in M(\Om)} L(u,\beta)
 = \max_{\beta \geq 0} \left[ \inf \eqref{def:designprop} - \beta K \right]
\]
Therefore, by Lagrange duality (see, e.g., \cite[Section~2.5.2]{bonnans2000perturbation}),
the set of saddle points of the Lagrangian is given precisely by \((\optomb^K,\beta(K))\),
where \(\optomb^k\) solves~\eqref{def:constraineddesign} and \(\beta(K)\) solves the dual
problem given above. Clearly, saddle points of \(L\) give solutions
of~\eqref{def:designprop} with \(\beta = \beta(K)\).
}
Together, this proves the first statement.

Assume that $\Psi$ is strictly monotone. Let $\optom^K$ be an arbitrary optimal solution
to \eqref{def:constraineddesign} with $\mnorm{\optom^K}<K$. Using the strict monotonicity
of $\Psi$ we deduce that $\optom^K \neq 0$.
Defining $\widetilde{\om}=(K/\mnorm{\optom^K})\optom^K$ there holds
$\psi(\widetilde{\om})< \psi(\optom^K)$ since $(K/ \mnorm{\optom^K})>1$.
This gives a contradiction and $\mnorm{\optom^K}=K$.
 It remains to show that for a given $K$ the associated Lagrange multiplier denoted by $\beta(K)$ is positive, unique, and $\beta(K_1)\leq\beta(K_2)$ if $K_2 >K_1$. To prove the positivity, assume that $\beta(K)=0$. Then we obtain
\begin{align*}
L(\bar{\om}^K, \beta(K))=\inf_{\om \in M^+(\Om)}L(\om, \beta(K))= \inf_{\om \in M^+(\Om)} \psi(\om).
\end{align*}
Given $\om \in \dom_{M^+(\Om)}\psi$, we have $\psi(2 \om)< \psi(\om)$ and consequently the infimum in the equality above is not attained, yielding a contradiction. 
Assume that $\beta(K)$ is not unique, i.e. there exist $\beta_1(K), \beta_2(K) >0$ such that each optimal solution $\bar{\om}^K$ of \eqref{def:constraineddesign} is also a minimizer of $L(\cdot,\beta_1(K))$ and $L(\cdot,\beta_2(K))$ over $M^+(\Om)$. First we note again that $0\in M^+(\Om)$ is not an optimal solution to \eqref{def:constraineddesign} due to the strict monotonicity of $\Psi$. Additionally it holds $\mnorm{\bar{\om}^K}=K$. Without loss of generality assume that $\beta_1(K)<\beta_2(K)$. From the necessary optimality conditions for $(P_{\beta_1(K)})$ and $(P_{\beta_2(K)})$, see \eqref{suppcongeneral}, we then obtain
\begin{align*}
-\psi'(\bar{\om}^K) \leq \beta_1(K) < \beta_2(K),
\quad
\operatorname{supp} \optomb \subset \left\{\, x\in \Om \;|\; -\psi'(\optomb)(x)=\beta_2(K) \,\right\},
\end{align*}
implying $\bar{\omega}^K=0$ which gives a contradiction.
\end{proof}

Many commonly used optimality criteria $\Psi$ are positively homogeneous in the sense
that there exists a convex, strictly decreasing, and positive function $\gamma$ fulfilling 
\begin{align} \label{positivehomogen}
  \Psi(r N) = \gamma(r)\,\Psi(N) \quad \forall r>0,\; N\in \operatorname{PD}(n);
\end{align}
cf.\ also \cite[p.~26]{fedorov2012model}.
For example, both the A and the D-criterion fulfill this homogeneity with $\gamma_A$ and $\gamma_D$ given by
\begin{align*}
\gamma_A(r) = r^{-1}, \quad \gamma_D(r) = r^{-n}.
\end{align*}
The following lemma illustrates the findings of the previous result,
provided that $\fish_0=0$. It turns out that solutions to \eqref{def:constraineddesign}
can be readily obtained by scaling optimal solutions to \eqref{def:designprop}.
\begin{proposition} \label{prop:equivalence}
Assume that $\fish_0 = 0$ and $\,\Psi$ is positive homogeneous in the sense of~\eqref{positivehomogen}.
Let $\optomb$ be a solution to \eqref{def:designprop} for some fixed $\beta>0$. Then
\begin{align}\label{scalingfactor}
  K \, \optomb / \norm{\optomb}_{M(\Omega)} \quad\text{solves}\quad \eqref{def:constraineddesign}.
\end{align}
\end{proposition}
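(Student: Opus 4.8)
The plan is to exploit the positive homogeneity~\eqref{positivehomogen} together with the linearity of $\fish$ to decouple each problem into a \emph{radial} part (the total mass) and an \emph{angular} part (the normalized measure), and then to observe that the angular optimization is one and the same for \eqref{def:designprop} and \eqref{def:constraineddesign}. Concretely, every nonzero $\om \in M^+(\Om)$ is written as $\om = r\nu$ with $r = \mnorm{\om}$ and $\mnorm{\nu}=1$, and by linearity of $\fish$ and~\eqref{positivehomogen} one has $\Psi(\fish(\om)) = \gamma(r)\,\Psi(\fish(\nu))$ whenever $\fish(\nu)\in\operatorname{PD}(n)$.

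First I would record that the given solution $\optomb$ is nonzero. Since $\fish_0=0$ and $\fish(0)=0\notin\operatorname{PD}(n)$, assumption $(\textbf{A1})$ gives $\psi(0)=+\infty$, whereas $\optomb$ has finite objective (a minimizer exists only if $\dom_{M^+(\Om)}\psi\neq\emptyset$). Hence $\bar r := \mnorm{\optomb} > 0$ and I may set $\bar\nu := \optomb/\bar r$, $\mnorm{\bar\nu}=1$. Finiteness of the objective forces $\fish(\optomb)=\bar r\,\fish(\bar\nu)\in\operatorname{PD}(n)$, hence $\fish(\bar\nu)\in\operatorname{PD}(n)$, so~\eqref{positivehomogen} is applicable at $\bar\nu$.

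The crux is the claim that $\bar\nu$ minimizes $\nu\mapsto\Psi(\fish(\nu))$ over $\{\,\nu\in M^+(\Om)\;|\;\mnorm{\nu}=1\,\}$. Indeed, were there an admissible $\nu'$ with $\Psi(\fish(\nu'))<\Psi(\fish(\bar\nu))$ (which forces $\fish(\nu')\in\operatorname{PD}(n)$ by $(\textbf{A1})$), then $\om':=\bar r\,\nu'$ would satisfy $\mnorm{\om'}=\bar r=\mnorm{\optomb}$ and, since $\gamma(\bar r)>0$,
\[
F(\om') = \gamma(\bar r)\Psi(\fish(\nu')) + \beta\bar r < \gamma(\bar r)\Psi(\fish(\bar\nu)) + \beta\bar r = F(\optomb),
\]
contradicting optimality of $\optomb$ for~\eqref{def:designprop}. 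This is precisely where homogeneity does the work: it renders the optimal direction independent of the radial scale, so $\optomb$ and the sought solution of~\eqref{def:constraineddesign} must share the same $\bar\nu$.

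Finally I would verify that $\widetilde\om := K\,\bar\nu = K\optomb/\mnorm{\optomb}$ solves~\eqref{def:constraineddesign}. It is feasible, $\mnorm{\widetilde\om}=K$. For any feasible competitor $\om$ with $\om\neq 0$, writing $\om=r\nu$, $r=\mnorm{\om}\leq K$, $\mnorm{\nu}=1$, I estimate
\[
\Psi(\fish(\om)) = \gamma(r)\,\Psi(\fish(\nu)) \geq \gamma(K)\,\Psi(\fish(\nu)) \geq \gamma(K)\,\Psi(\fish(\bar\nu)) = \Psi(\fish(\widetilde\om)),
\]
where the first inequality uses that $\gamma$ is decreasing, $r\leq K$, and $\Psi(\fish(\nu))\geq 0$ by $(\textbf{A3})$, and the second uses the minimality of $\bar\nu$ established above (the degenerate case $\om=0$ and any case with $\Psi(\fish(\nu))=+\infty$ being immediate). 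I do not expect a genuine obstacle here: the only step requiring care is bookkeeping, namely invoking~\eqref{positivehomogen} strictly on $\operatorname{PD}(n)$ and handling the extended-real-valued ($+\infty$) cases where a normalized $\nu$ yields a singular Fisher matrix. The substantive idea is the radial–angular decoupling; the remainder is routine verification.
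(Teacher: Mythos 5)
Your proof is correct and follows essentially the same route as the paper: a radial--angular decomposition via the positive homogeneity~\eqref{positivehomogen}, showing that the normalized part $\optomb/\mnorm{\optomb}$ solves the unit-mass problem and that rescaling to mass $K$ then solves~\eqref{def:constraineddesign}. The paper phrases this through identities between the optimal values (reducing both problems to $(P^1)$), while you argue directly at the level of minimizers with a contradiction step and a verification, including careful handling of the $+\infty$ cases; this is the same idea, merely organized differently.
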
 
\begin{proof}
First we note that under the stated assumptions every optimal solution $\optom^K$ to
\eqref{def:constraineddesign} fulfills $\mnorm{\optom^K}=K$.
Clearly, we have
\[
\min \eqref{def:constraineddesign}
= \min_{\substack{\om \in M^+(\Om), \\ \|\om\|=K}} \psi(\om)
= \min_{\substack{\om' \in M^+(\Om), \\ \|\om'\|=1}} \psi(K \om')
= \gamma(K) \min (P^1),
\]
by using the positive homogeneity of \(\Psi\).
Thus, the solutions of \eqref{def:constraineddesign} are given by \(K\om^1\), where
\(\om^1\) are solutions of $(P^1)$. Now, using the fact that
\begin{align*}
\min \eqref{def:designprop}
= \min_{K \geq 0} \left\lbrack \min_{\om' \in M^+(\Om),\; \|\om'\|=1} \psi(K \om') + \beta K \right\rbrack
= \min_{K \geq 0} \left\lbrack \gamma(K) \min(P^1) + \beta K \right\rbrack
\end{align*}
the solutions \(\optomb\) of \eqref{def:designprop} can be computed as
\(\optomb = K \om^1\), where \(K\) minimizes the above expression and \(\om^1 \in \argmin (P^1)\).
Together, this directly implies~\eqref{scalingfactor}. 
\end{proof}

As we have shown in the case $\fish_0=0$, i.e.\ in the absence of a priori knowledge, the
optimal locations of the sensors \(x\) are independent of the cost parameter $\beta$ (resp,\ \(K\)), which only
affects the scaling of the coefficients \(\lambda\). However for $\fish_0 \neq 0$ this is
generally not the case.
Loosely speaking, if the a~priori information is relatively good
(i.e.\ \(\fish_0 \in \operatorname{PD}(n)\)) and the cost per measurement is too high, the optimal design is
given by the zero function, i.e.\ the experiment should not be carried out at all.
\begin{proposition}
Let $\fish_0 \in \operatorname{PD}(n)$. Then the zero function
$\optom = 0$ is an optimal solution to~\eqref{def:designprop} if and only if
$\beta > \beta_0 = -\min_{x\in\Om}\psi'(0)$.
\end{proposition}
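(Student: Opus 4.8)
The plan is to specialize the first-order optimality characterization of Lemma~\ref{lem:optcond} (equivalently Proposition~\ref{prop:firstordernec}) to the candidate $\optom=0$. Before doing so I would confirm that $0$ is admissible, i.e.\ $0\in\dom_{M^+(\Om)}\psi$: since $\fish(0)=0$, the hypothesis $\fish_0\in\operatorname{PD}(n)$ gives $\fish(0)+\fish_0=\fish_0\in\operatorname{PD}(n)$, so $\psi(0)=\Psi(\fish_0)<\infty$ and, by Proposition~\ref{prop:properties} together with~\eqref{eq:gradient_psi}, the gradient $\psi'(0)$ is the continuous function $x\mapsto \partial S[\hat{q}](x)^\top\Psi'(\fish_0)\,\partial S[\hat{q}](x)\le 0$. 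This is precisely where $\fish_0\in\operatorname{PD}(n)$ is indispensable: otherwise $\psi(0)=+\infty$ and $0$ could never be optimal. Because $\Om$ is compact and $\psi'(0)$ is continuous, the minimum defining $\beta_0=-\min_{x\in\Om}\psi'(0)(x)\ge 0$ is attained at some $x^\ast\in\Om$.

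Specializing the support condition~\eqref{suppcongeneral} to $\om=0$, the inclusion is vacuous since $\supp 0=\emptyset$, and optimality of $0$ collapses to the single pointwise bound $-\psi'(0)(x)\le\beta$ for all $x\in\Om$, i.e.\ to $\beta\ge\beta_0$. To make the sufficiency direction self-contained I would record the sharp estimate obtained from convexity of $\psi$ together with $\pair{-\psi'(0),\om}\le\beta_0\mnorm{\om}$ (valid because $-\psi'(0)\le\beta_0$ and $\om\ge 0$):
\[
  F(\om)\ \ge\ F(0)+\pair{\psi'(0),\om}+\beta\mnorm{\om}\ \ge\ F(0)+(\beta-\beta_0)\mnorm{\om}
  \qquad\forall\,\om\in M^+(\Om).
\]
For $\beta>\beta_0$ the right-hand side strictly exceeds $F(0)$ whenever $\om\ne 0$, so $0$ is optimal; this settles the ``if'' direction.

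The main obstacle is the ``only if'' direction, and in particular the boundary value $\beta=\beta_0$, which separates the strict threshold in the statement from the non-strict one produced by the optimality condition. For $\beta<\beta_0$ I would exhibit a descent direction at the active point $x^\ast$, where $-\psi'(0)(x^\ast)=\beta_0$: evaluating the directional derivative of $\psi$ at $0$,
\[
  \lim_{t\to 0^+}\frac{F(t\delta_{x^\ast})-F(0)}{t}=\psi'(0)(x^\ast)+\beta=\beta-\beta_0<0,
\]
so $F(t\delta_{x^\ast})<F(0)$ for small $t>0$ and $0$ is not optimal; by contraposition, optimality of $0$ forces $\beta\ge\beta_0$. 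The delicate case is exactly $\beta=\beta_0$, where the displayed inequality still yields $F(\om)\ge F(0)$ while the bound at $x^\ast$ becomes active. I would therefore flag that the sharp threshold for $0$ to be a minimizer is $\beta\ge\beta_0$, and that the strict inequality $\beta>\beta_0$ in the proposition isolates the regime in which $0$ is the \emph{only} optimal design—matching the surrounding interpretation that ``the experiment should not be carried out at all''. Handling this boundary cleanly, via the sharpness of the descent computation at $x^\ast$, is the step I expect to require the most care.
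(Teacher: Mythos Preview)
Your approach is essentially the same as the paper's: specialize the optimality condition of Lemma~\ref{lem:optcond} to $\optom=0$, noting that $\fish_0\in\operatorname{PD}(n)$ guarantees $0\in\dom_{M^+(\Om)}\psi$, so that optimality reduces to the single pointwise bound $-\psi'(0)\le\beta$. The paper's proof is terser---it simply invokes Lemma~\ref{lem:optcond} for both directions---whereas you additionally spell out the convexity lower bound and the descent argument along $t\delta_{x^\ast}$, which is more self-contained but not required.

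You are right to flag the boundary case $\beta=\beta_0$. The paper's own proof in fact establishes the characterization with the non-strict inequality: it shows that for $\beta\ge\beta_0$ the optimality conditions hold and for $\beta<\beta_0$ they are violated, which is exactly what you obtain. So the strict inequality in the displayed statement is a minor imprecision of the paper; the correct threshold is $\beta\ge\beta_0$, and your attempt to reinterpret the strict inequality as singling out \emph{unique} optimality goes beyond what either proof actually shows.
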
 
\begin{proof}
We first note that \(0 \in \dom \psi\) and \(\beta_0 = -\min_{x\in\Om} \psi'(0) <
\infty\). Clearly, for \(\beta \geq \beta_0\), the zero function fulfills the optimality
conditions from Lemma~\ref{lem:optcond}. Thus, it is a solution to~\eqref{def:designprop}.
Conversely, for \(\beta < \beta_0\), the optimality conditions are violated.
\end{proof}

\section{Algorithmic solution} \label{sec:algorithm}
In this section we will elaborate on the solution of \eqref{def:designprop}.
We consider two different approaches.
First, we present an algorithm relying on finitely supported iterates and the sequential
insertion of single Dirac Delta functions based on results for a linear-quadratic optimization
problem in \cite{bredies2013inverse} and \cite{bredies2009generalized}. We derive all
necessary results to prove convergence of the generated sequence of measures towards a
minimizer of \eqref{def:designprop} together with a sub-linear convergence rate of the
objective function value. Additionally we propose to alternate between point insertion and
point deletion steps to benefit the sparsity of the iterates and to speed
up the convergence of the algorithm in practice. These sparsification steps are based on
the approximate solution of finite dimensional optimization problems in every
iteration. As an example we give two explicit realizations for the point removal and
discuss the additional computational effort in comparison to an algorithm solely based on
point insertion steps. \changed{Moreover, we propose a sparsification procedure based on the proof of Theorem~\ref{thm:existence of finite}, which ensures that the support size of all iterates is uniformly bounded and guarantees a sparse structure of the computed optimal design.}

%

\subsection{A generalized conditional gradient method}
\label{subsec:condgrad}
For the direct solution of \eqref{def:designprop} on the admissible set $M^+(\Om)$ we
adapt the numerical procedure presented in \cite{bredies2013inverse}, which relies on
finitely supported iterates.  A general description of the method is given in
Algorithm~\ref{alg:conditionalgradient}. For convenience of the reader
we give a detailed description of the individual steps and their derivation.
\begin{algorithm}
\begin{algorithmic}
\STATE 1. Choose $\om^1 \in \dom_{M^+(\Om)}~\psi$, $\# \supp\om^1 \leq n(n+1)/2$. Set $M_0= F(\om^1)/ \beta$.
 \WHILE {$\Phi(\om^k)\geq \mathrm{TOL}$}
 \STATE 2.
 Compute $\psi'_k=\psi'(\om^k)$.
 Determine $\hat{x}^k \in \argmin_{x\in\Om}~\psi'_k(x)$.
 \STATE 3.
 Set $v^k = \theta^k \delta_{\hat{x}^k}$ with
 $\theta^k = \begin{cases} 0, & \psi'_k(\hat{x}^k) \geq -\beta, \\
   -(M_0/\beta)\psi'_k(\hat{x}^k) , & \text{else} \end{cases}$
 \STATE 4.
 Select a step size \(s^k \in (0,1]\) and set
 \(\om^{k+1/2} = (1-s^k)\om^k + s^k v^k\).
 \STATE 5.
 Find $\om^{k+1}$ with $\supp\om^{k+1} \subseteq \supp\om^{k+1/2}$ and $F(\om^{k+1})\leq F(\om^{k+1/2})$.
 \ENDWHILE
\end{algorithmic}
\caption{Successive point insertion}\label{alg:conditionalgradient}
\end{algorithm}
The basic idea behind the procedure relies on a point insertion process (steps~2.--4.\ in
Algorithm~\ref{alg:conditionalgradient}) related to a generalized conditional gradient
method.
More precisely, they consist of conditional gradient steps for a surrogate optimization
problem with the same optimal solutions, in which the sublinear total variation norm is replaced by
a coercive cost term for designs of very large norm.
Additionally, we consider the minimization of the finite dimensional subproblem that
arises from restriction of the design measure to the 
active support of the current iterate (in step~5.). This is motivated on the one hand by the
desire to potentially remove non-optimal support points
by setting the corresponding coefficient to zero, and on the other hand by the desire to
obtain an accelerated convergence behavior in practice.

This section is structured as follows: First, we focus on the point insertion step and its descent properties.
By a suitable choice of the step 
size $s^k$ in each step of the procedure we are able to prove a sub-linear convergence
rate for the objective functional value. Secondly, we consider concrete examples for
the point removal step 5.

\subsubsection{Convergence analysis}
As already pointed out, Algorithm~\ref{alg:conditionalgradient}
relies on a coercive surrogate design problem which admits the same optimal solutions as
\eqref{def:designprop}.
Given a constant $M_0>0$, we start by introducing the auxiliary function $\varphi_{M_0}
\colon \R_+ \to \R$ as
\begin{align*}
\varphi_{M_0}(t)
  = \begin{cases} t, & t \leq M_0, \\
    (1/(2M_0)) \left[t^2 + M_0^2\right], & \text{else}, \end{cases}
\end{align*}
and consider the modified problem
\begin{align}
\label{def:designpropaux}\tag{\ensuremath{P^{M_0}_{\beta}}}
\min_{\om \in M^+(\Om)} F_{M_0}(\om) = \psi(\om) + \beta \varphi_{M_0} (\mnorm{\om})
\end{align}
for the special choice of $M_0 = F(\om^1)/\beta$, with arbitrary but fixed $\om^1 \in
\dom_{M^+(\Om)} \psi$. Note that, for all $\om \in M^+(\Om)$ with $F(\om)\leq
F(\om^1)$, there holds $\mnorm{\om} \leq M_0$ and
consequently $F(\om)=F_{M_0}(\om)$ . We additionally point out that
\begin{equation}\label{eq:optomnorm}
\varphi_{M_0}(\mnorm{\optomb}) = \mnorm{\optomb}
\end{equation}
for every optimal solution $\optomb$ of \eqref{def:designpropaux}.
Connected to this auxiliary problem we additionally define the primal-dual gap
$\Phi\colon \dom \psi \to [0, \infty)$ by 
\begin{align*}
\Phi(\om) = \sup_{v \in M^+(\Om)} \left\lbrack\pair{\psi'(\om),\om-v} + \beta\mnorm{\om} - \beta\varphi_{M_0}(\mnorm{v})\right\rbrack.
\end{align*}
Note that the value of \(\Phi\) is finite for every \(v \in \dom\psi\),
which follows with the coercivity of \(\varphi_{M_0}(\cdot)\).
In the next proposition we collect several results to establish the connection between the optimal design problems \eqref{def:designprop} and \eqref{def:designpropaux}.
\begin{proposition}
Let $\om^1 \in \dom_{M^+(\Om)} \psi$ be arbitrary but fixed and set
$M_0=F(\om^1)/ \beta$. Given $\optomb \in \dom_{M^+(\Om)} \psi$ the
following three statements are equivalent:
\begin{itemize}
\item [1.] The measure $\optomb$ is a minimizer of \eqref{def:designprop}.
\item [2.] The measure $\optomb$ is a minimizer of \eqref{def:designpropaux}.
\item [3.] The measure $\optomb$ fulfils $\Phi(\optomb)=0$.
\end{itemize}
Furthermore there holds
\begin{align}\label{est:residual}
\Phi(\om) \geq F(\om)- F(\optomb) =: r_F(\om),
\end{align}
for all $\om \in \dom_{M^+(\Om)}\psi,\mnorm{\om}\leq M_0$ and all minimizers $\optomb$ of \eqref{def:designpropaux}.
\end{proposition}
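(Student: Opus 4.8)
The plan is to establish the equivalences $1.\Leftrightarrow 2.$ and $2.\Leftrightarrow 3.$, and then to prove the residual estimate~\eqref{est:residual}. I would begin with $1.\Leftrightarrow 2.$, which rests entirely on the observation already recorded before the proposition: for every $\om$ with $F(\om)\leq F(\om^1)$ one has $\mnorm{\om}\leq M_0$ and hence $\varphi_{M_0}(\mnorm{\om})=\mnorm{\om}$, so that $F(\om)=F_{M_0}(\om)$. Since $\varphi_{M_0}(t)\geq t$ for all $t\geq 0$, we have $F_{M_0}\geq F$ pointwise, and both functionals agree on the sublevel set $\{\,F\leq F(\om^1)\,\}$, which contains all minimizers of either problem. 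I would argue that any minimizer of $F$ must satisfy $F(\optomb)\leq F(\om^1)$, hence lies in this sublevel set where $F=F_{M_0}$, and since $F_{M_0}\geq F$ everywhere it is also a minimizer of $F_{M_0}$; the reverse direction is symmetric, using~\eqref{eq:optomnorm}.

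Next I would address $2.\Leftrightarrow 3.$. The key point is that $\Phi$ is precisely the Fenchel–Young-type duality gap associated with~\eqref{def:designpropaux}: by convexity of $\varphi_{M_0}(\mnorm{\cdot})$ and differentiability of $\psi$, the term $\pair{\psi'(\om),\om-v}+\beta\mnorm{\om}-\beta\varphi_{M_0}(\mnorm{v})$ is nonnegative when maximized over $v$, and it vanishes at $v=\om$ only in the degenerate sense; more to the point, $\Phi(\om)=0$ encodes the first-order optimality condition $-\psi'(\om)\in\partial\bigl(\beta\varphi_{M_0}(\mnorm{\cdot})\bigr)(\om)$ together with the indicator constraint. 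Since $F_{M_0}$ is convex and $\psi$ is differentiable, vanishing of this gap is equivalent to $\optomb$ minimizing $F_{M_0}$, which is statement $2.$ I would make this precise by observing $\Phi(\om)=\psi'(\om)(\om)+\beta\mnorm{\om}-\inf_{v\in M^+(\Om)}\bigl[\pair{\psi'(\om),v}+\beta\varphi_{M_0}(\mnorm{v})\bigr]$ and identifying the infimum as the condition that certifies optimality.

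For the residual estimate~\eqref{est:residual}, take any minimizer $\optomb$ of~\eqref{def:designpropaux} and any admissible $\om$ with $\mnorm{\om}\leq M_0$. In the definition of $\Phi(\om)$, I would simply insert the admissible choice $v=\optomb$ to obtain the lower bound
\begin{align*}
\Phi(\om)\geq \pair{\psi'(\om),\om-\optomb}+\beta\mnorm{\om}-\beta\varphi_{M_0}(\mnorm{\optomb}).
\end{align*}
Using~\eqref{eq:optomnorm} to replace $\varphi_{M_0}(\mnorm{\optomb})$ by $\mnorm{\optomb}$, and then invoking convexity of $\psi$ in the form $\psi(\optomb)\geq\psi(\om)+\pair{\psi'(\om),\optomb-\om}$, the right-hand side is bounded below by $\psi(\om)-\psi(\optomb)+\beta\mnorm{\om}-\beta\mnorm{\optomb}=F(\om)-F(\optomb)=r_F(\om)$, where I also use that $\mnorm{\om}\leq M_0$ forces $F(\om)=F_{M_0}(\om)$. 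This yields~\eqref{est:residual}.

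I expect the main obstacle to be the clean bookkeeping in $2.\Leftrightarrow 3.$, namely rewriting $\Phi$ as a genuine Fenchel dual gap and verifying that its vanishing is exactly the subdifferential optimality condition for $F_{M_0}$ (with the positive-cone indicator folded in). The residual bound, by contrast, is a one-line consequence of convexity once the admissible test measure $v=\optomb$ is plugged in, and the $1.\Leftrightarrow 2.$ equivalence is a direct sublevel-set argument; so the conceptual weight sits in correctly interpreting the gap functional.
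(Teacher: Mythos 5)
Your treatment of the equivalence $2.\Leftrightarrow 3.$ and of the residual bound \eqref{est:residual} is correct and follows essentially the same route as the paper: the paper likewise identifies the vanishing of $\Phi$ with the subdifferential (first-order) optimality condition for the convex functional $F_{M_0}$, and it proves \eqref{est:residual} by exactly your computation, namely testing the supremum in $\Phi(\om)$ with $v=\optomb$, invoking \eqref{eq:optomnorm}, and using the gradient inequality $\psi(\optomb)\geq \psi(\om)+\pair{\psi'(\om),\optomb-\om}$; the bookkeeping you flag as the main obstacle (matching $\beta\mnorm{\cdot}$ in $\Phi$ against $\beta\varphi_{M_0}(\mnorm{\cdot})$ in the optimality condition) is resolved in the paper precisely via \eqref{eq:optomnorm}, as you propose. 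The genuine difference lies in $1.\Leftrightarrow 2.$: the paper does not prove this equivalence at all but defers to the reference \cite{bredies2013inverse}, whereas you give a self-contained sublevel-set argument based on $F_{M_0}\geq F$ and the coincidence $F=F_{M_0}$ on $\left\{\,F\leq F(\om^1)\,\right\}$. That is a worthwhile addition, but one step needs repair: the reverse implication is \emph{not} literally ``symmetric,'' because the pointwise inequality $F_{M_0}\geq F$ points the wrong way for it. If $\optomb$ minimizes $F_{M_0}$, then \eqref{eq:optomnorm} gives $F(\optomb)=F_{M_0}(\optomb)\leq F_{M_0}(\om^1)=F(\om^1)$, and for a competitor $\om$ you must split cases: if $\mnorm{\om}\leq M_0$, then $F(\om)=F_{M_0}(\om)\geq F_{M_0}(\optomb)=F(\optomb)$, while if $\mnorm{\om}>M_0$, then
\begin{align*}
F(\om)\;\geq\;\beta\mnorm{\om}\;>\;\beta M_0\;=\;F(\om^1)\;\geq\;F(\optomb).
\end{align*}
With this two-line case distinction your argument for $1.\Leftrightarrow 2.$ is complete, and the proposal as a whole stands.
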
 
\begin{proof}
The equivalence between the first two statements can be proven as in \cite{bredies2013inverse}. We only prove the third one. Similar to the proof of \eqref{subdifferentialinequ} (see Proposition \ref{prop:firstordernec}) a given $\optomb \in \dom_{M^+(\Om)}\psi$ is a minimizer of \eqref{def:designpropaux} if and only if it fulfills
\begin{align*}
- \pair{\psi'(\optomb), \om-\optomb} + \beta\varphi_{M_0} (\mnorm{\optomb})
 \leq \beta\varphi_{M_0} (\mnorm{\om}) \quad \forall \om \in M^+(\Om).
\end{align*}
By reordering and taking the minimum over all $\om \in M^+(\Om)$ this can be equivalently written as
\begin{align*}
\sup_{\om\in M^+(\Om)} \left\lbrack\pair{\psi'(\optomb),\optomb-\om}
 + \beta\varphi_{M_0}(\mnorm{\optomb}) - \beta\varphi_{M_0}(\mnorm{\om}) \right\rbrack
  = 0.
\end{align*}
Utilizing \eqref{eq:optomnorm} we find $\Phi(\optomb)=0$ if and only if $\optomb$ is a
minimizer of $F_{M_0}$. It remains to prove \eqref{est:residual}.
Given $\om \in \dom_{M^+(\Om)} \psi$ with $\mnorm{\om}\leq M_0$
and a minimizer $\optomb$ we obtain
\begin{align}\label{eq:Fdiff}
  F(\om)-F(\optomb)\leq \beta \mnorm{\om} - \beta\mnorm{\optomb} + \pair{\psi'(\om),\om-\optomb},
\end{align}
by convexity of $\psi$. Noting that
\begin{align*}
 -\lbrack \beta \mnorm{\optomb} + \pair{\psi'(\om),\optomb} \rbrack
  \leq -\inf_{v\in M^+(\Om)} \lbrack\pair{\psi'(\om),v} + \beta\varphi_{M_0}(\mnorm{v})\rbrack,
\end{align*}
the right-hand side in \eqref{eq:Fdiff} is estimated by $\Phi(\om)$, which concludes the proof.
\end{proof}
With the result of the last proposition we may consider a minimization algorithm
for~\eqref{def:designpropaux} in order to compute optimal solutions to
\eqref{def:designprop}.
Additionally, the result sug\-gests the use of $\Phi$ as a convergence criterion, since it gives an upper bound for the
residual error in the objective function value. As can be seen below, the evaluation of
$\Phi$ can be easily computed as a by-product of steps 2.--3. in
Algorithm~\ref{alg:conditionalgradient}.

The algorithm operates on finitely supported iterates $\om^k= \sum_{i=1}^{m_k}
\lambda^k _i \delta_{x^k_i}$ with distinct support points $x^k_i \in \Om$ and positive
coefficients $\lambda^k_i$, $i \in \{\,1, \ldots, m_k\,\}, m_k \in \N$.
In steps 2.--4.\ the intermediate iterate $\om^{k+1/2}$ is obtained as a convex combination
between the previous iterate $\om^k$ and a scaled Dirac delta function $\theta^k
\delta_{\hat{x}^k}$ inserted at the global minimum of the gradient $\psi'(\om^k)$. The
initial coefficient $\theta^k$ is determined by the current maximal violation of the lower
bound on the gradient of $\psi$; see \eqref{suppcongeneral}. In the following lemma we
relate this definition to the computation of a descent direction in the context of a
generalized conditional gradient method
(cf.~\cite{rakotomamonjy2015generalized,bredies2013inverse, bredies2009generalized})
for the auxiliary problem \eqref{def:designpropaux}.
\begin{lemma}\label{lem:minimizeroflin}
Let $\om^k \in \dom_{M^+(\Om)}\psi$ be given. Then the measure
$v^k = \theta^k \delta_{\hat{x}^k}$ with $\hat{x}^k\in \Om$ and $\theta^k \geq 0$ as defined in
steps 2.--3.\ of Algorithm~\ref{alg:conditionalgradient} is a minimizer of
\begin{align*} \label{def:linearizeddesign}
\min_{v \in M^+(\Om)} \pair{\psi'(\om^k), v} + \beta\varphi_{M_0}(\mnorm{v}).
\tag{\ensuremath{P^{\mathrm{lin}}_{\beta}}}
\end{align*}
Moreover, \(v^k\) realizes the supremum in the definition of the primal-dual gap: it holds
\(\Phi(\om^k) = \pair{\psi'(\om^k),\om^k-v^k} + \beta\mnorm{\om^k} - \beta\varphi_{M_0}(\mnorm{v^k})\).
\end{lemma}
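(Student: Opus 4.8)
The plan is to reduce the infinite-dimensional problem $(P^{\mathrm{lin}}_\beta)$ to a one-dimensional minimization over the total mass, exploiting the fact that the linear part of the objective is governed entirely by the minimum value of the gradient. First I would record that, by Proposition~\ref{prop:properties}, $\psi'_k = \psi'(\om^k)$ is a continuous function on $\Om$ attaining its minimum at $\hat{x}^k$; writing $m_k := \psi'_k(\hat{x}^k) = \min_{x\in\Om}\psi'_k(x)$, for every $v \in M^+(\Om)$ one has
\begin{align*}
\pair{\psi'_k, v} = \int_\Om \psi'_k \de v \geq m_k\, v(\Om) = m_k \mnorm{v},
\end{align*}
with equality whenever $\supp v \subset \{\,x\in\Om \mid \psi'_k(x) = m_k\,\}$, in particular for $v = t\,\delta_{\hat{x}^k}$. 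Hence, for the objective $g(v) := \pair{\psi'_k, v} + \beta\varphi_{M_0}(\mnorm{v})$, I obtain $g(v) \geq h(\mnorm{v})$ with the scalar function $h(t) := m_k t + \beta\varphi_{M_0}(t)$, and this lower bound is attained by $t\,\delta_{\hat{x}^k}$. Consequently $\min_{v} g(v) = \min_{t\geq 0} h(t)$, and it suffices to identify the scalar minimizer and to show that $t^\ast \delta_{\hat{x}^k}$ equals $v^k$.

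Second, I would solve the scalar problem. Since $\varphi_{M_0}$ is convex and continuously differentiable with $\varphi_{M_0}'(t) = 1$ for $t \leq M_0$ and $\varphi_{M_0}'(t) = t/M_0$ for $t \geq M_0$, the function $h$ is convex and $C^1$ with $h'(t) = m_k + \beta\varphi_{M_0}'(t)$. If $m_k \geq -\beta$ (the first branch in the definition of $\theta^k$), then $h'(t) \geq m_k + \beta \geq 0$ everywhere, so $h$ is nondecreasing and minimized at $t^\ast = 0 = \theta^k$. If $m_k < -\beta$, then $h'$ is negative on $[0,M_0]$, and the stationarity condition $m_k + \beta t/M_0 = 0$ yields the critical point $t^\ast = -(M_0/\beta)m_k$; because $-m_k/\beta > 1$ this indeed lies in the region $t > M_0$ where the second branch of $\varphi_{M_0}$ applies, confirming $t^\ast = -(M_0/\beta)\psi'_k(\hat{x}^k) = \theta^k$. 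In both cases $t^\ast = \theta^k$, so $v^k = \theta^k\delta_{\hat{x}^k}$ is a minimizer of $(P^{\mathrm{lin}}_\beta)$.

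Third, I would obtain the primal--dual gap identity by separating the terms that do not depend on $v$: since
\begin{align*}
\Phi(\om^k) = \pair{\psi'_k,\om^k} + \beta\mnorm{\om^k} - \inf_{v\in M^+(\Om)}\left[\pair{\psi'_k, v} + \beta\varphi_{M_0}(\mnorm{v})\right],
\end{align*}
and the infimum equals $g(v^k)$ by the first part, substituting and regrouping gives exactly $\Phi(\om^k) = \pair{\psi'_k, \om^k - v^k} + \beta\mnorm{\om^k} - \beta\varphi_{M_0}(\mnorm{v^k})$, as claimed.

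The only genuinely delicate point is the reduction step: one must justify that minimality of $\psi'_k$ at $\hat{x}^k$ decouples the location and mass parts of the measure, so that concentrating all mass at $\hat{x}^k$ is optimal and the problem collapses to minimizing $h$. Everything after that is elementary one-dimensional convex analysis, the only care being to verify that the stationary point falls into the correct branch of $\varphi_{M_0}$.
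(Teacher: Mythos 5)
Your proposal is correct and follows essentially the same route as the paper: both reduce \eqref{def:linearizeddesign} to a scalar convex problem in the total mass by observing that, since $\psi'(\om^k)\leq 0$, concentrating all mass at a global minimizer $\hat{x}^k$ of the gradient is optimal (your pointwise bound $\pair{\psi'_k,v}\geq m_k\mnorm{v}$ is just a restatement of the paper's inner minimization over unit-norm measures), and both obtain the gap identity by the same regrouping of terms. The only difference is presentational: you carry out the one-dimensional minimization of $h(t)=m_k t+\beta\varphi_{M_0}(t)$ explicitly, including the check that the stationary point lands in the quadratic branch of $\varphi_{M_0}$, where the paper dismisses this as ``straightforward calculations.''
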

\begin{proof}
\changed{We note that \eqref{def:linearizeddesign} can be equivalently expressed as
\begin{align} \label{eq:linref} 
\min_{\theta\in [0, \infty)}
  \theta  \min_{\substack{\tilde{v} \in M^+(\Om), \\ \mnorm{\tilde{v}}=1}} \pair{\psi'(\om^k), \tilde{v}} + \beta\varphi_{M_0}(\theta)
\end{align}
Due to~$\psi'(\om^k)\leq 0$ and~$\theta \geq 0$, a solution to the inner minimization problem is given by~$\tilde{v}^k= \delta_{\hat{x}^k}$ with~$\hat{x}^k \in \argmin_{x\in \Om} \psi'(\om^k)(x)$. In fact we have
\begin{align*}
\langle \psi'(\om^k), \tilde{v} \rangle \geq \langle \psi'(\om^k), \tilde{v}^k \rangle=\min_{x\in \Om} \psi'(\om^k)(x) \quad \forall \tilde{v} \in M^+(\Om),~\mnorm{\tilde{v}}=1.
\end{align*}
Thus problem~\eqref{eq:linref} reduces to
\begin{align*}
\min_{\theta\in [0, \infty)} \theta \min_{x\in\Om} \psi'(\om^k)(x)+ \beta\varphi_{M_0}(\theta).
\end{align*}
By straightforward calculations, we verify that~$\theta^k$ as defined in step 2. of Algorithm~\ref{alg:conditionalgradient} is a minimizer of this problem. We conclude that $v^k=\theta^k \tilde{v}^k$ is a solution of~\eqref{def:linearizeddesign}. This finishes the proof of the first statement. Moreover, the second statement follows due to
\begin{align*}
\Phi(\om^k) &= \sup_{v \in M^+(\Om)} \left\lbrack\pair{\psi'(\om^k),\om-v} + \beta\mnorm{\om^k} - \beta\varphi_{M_0}(\mnorm{v})\right\rbrack \\&=\pair{\psi'(\om^k),\om}+\beta\mnorm{\om^k}-\min_{v \in M^+(\Om)} \pair{\psi'(\om^k), v} + \beta\varphi_{M_0}(\mnorm{v})\\
&= \pair{\psi'(\om^k),\om^k-v^k} + \beta\mnorm{\om^k} - \beta\varphi_{M_0}(\mnorm{v^k}).
\qedhere
\end{align*}
}
\end{proof}
\begin{remark}
At this point, replacing \eqref{def:designprop} by the equivalent formulation
\eqref{def:designpropaux} is crucial. In fact, the partially linearized problem
corresponding to the original problem
\begin{align*}
\min_{v \in M^+(\Om)} \pair{\psi'(\om), v} + \beta \mnorm{v},
\end{align*}
is either unbounded or has an unbounded solution set in the case that 
$\min_{x \in \Om}\psi'(\om) \leq -\beta$.
\end{remark}
Note that, as a by-product of the last result, the convergence criterion $\Phi(\om^k)$ can be
evaluated cheaply, once the current gradient $\psi'(\om^k)$ and its minimum point are calculated.

We form the intermediate iterate as convex combination between the old iterate and the new sensor i.e., ${\om^{k+1/2} = (1-s^k)\om^k + s^k v^k}$, where $s^k \in (0, 1]$ is suitably
chosen. This ensures $\om^{k+1/2}\in M^+(\Om)$. The step size $s^k$ will be chosen by the
following generalization of the well-known Armijo-Goldstein condition; see, e.g.,
\cite{bredies2009generalized}. This choice of the step size ensures a sufficient decrease of the objective
function value in every iteration of Algorithm~\ref{alg:conditionalgradient} and the
overall convergence of the presented method. More precisely, for fixed $\gamma \in (0,1)$,
$\alpha \in (0,1/2]$, the step size is set to $s^k=\gamma^{n_k}$, where $n_k$ is the
smallest non-negative integer with
 \begin{align}\label{Def:Armijo}
\alpha s^k \Phi(\om^k)\leq F_{M_0}(\om^k)-F_{M_0}(\om^{k}+s^k(v^k-\om^k)).
\end{align}
Note that given an arbitrary non-optimal $\om^k \in \dom_{M^+(\Om)}\psi$ with $\mnorm{\om^k}\leq M_0$ this choice of the step size $s^k$ is always possible since the function $W\colon [0,1]\rightarrow \R \cup \{-\infty\}$
\begin{align} \label{def:helpfunction}
W(s)=\frac{F_{M_0}(\om^k)-F_{M_0}(\om^k +s(v^k-\om^k))}{s\Phi(\om^k)},
\end{align}
fulfills $\lim_{s\rightarrow 0} W(s)\geq 1$, similarly to
\cite[Remark~2]{bredies2009generalized}.
\changed{Note that the left-hand side of~\eqref{Def:Armijo} is positive if~$\om^k$
is not optimal. Thus, the quasi-Armijo-Goldstein stepsize rule ensures a decrease of the
objective function value in each iteration. In particular, we get
\begin{align*}
\beta \mnorm{\om^{k+1}}\leq F_{M_0}(\om^{k+1})\leq F_{M_0}(\om^{k+1/2})\leq {F}_{M_0}(\om^{k})\leq F_{M_0}(\om^1 ),
\end{align*}
and consequently $F_{M_0}(\om^{k}) = F(\om^{k})$ for all iterates $\om^k$.}
To obtain quantifiable estimates for the descent in the objective function value we impose additional regularity assumptions on $\Psi'$ until the end of this section.
\begin{assumption} \label{ass:locallipschitz}
Assume that $\Psi'$ is Lipschitz-continuous on compact sets: Given a compact set $\mathcal{N}\subset \dom \Psi$ there exists $L_{\mathcal{N}}>0$ with
\begin{align}\label{lipschitzPsi}
\sup_{N_1, N_2\in \mathcal{N}} \frac{\norm{\Psi'(N_1)-\Psi'(N_2)}}{\norm{N_1-N_2}} \leq L_{\mathcal{N}},
\end{align}
where $\norm{A} = \norm{A}_{\operatorname{Sym}(n)} = \sqrt{\Tr(AA^\top)}$ is the Frobenius norm.
\end{assumption}
Note that this additional assumption is fulfilled if the design criterion $\Psi$ is
two-times continuously differentiable on its domain. This is the case for, e.g., the already mentioned A and
D-criterion
, see Section
\ref{sec:existence_and_optimality}. We immediately arrive at the following proposition.
\begin{proposition} \label{prop:locallipschitz}  
Let Assumption \ref{ass:locallipschitz} hold and let $\om_1 \in \dom_{M^+(\Om)} \psi$ be given. Define the associated sub-level set $E_{\om^1}$ as
\begin{align*}
E_{\om^1}=\left\{\,\om \in M^+(\Om)\;|\;F(\om)\leq F(\om^1)\,\right\}.
\end{align*}
 Then there exists $L_{\om^1}$ such that
\begin{align}\label{lipschitz}
\sup_{\om_1, \om_2\in E_{\om^1}} \frac{\|\psi'(\om_1)-\psi'(\om_2)\|_{C(\Om)}}{\mnorm{\om_1-\om_2}} \leq L_{\om^1}.
\end{align}
\end{proposition}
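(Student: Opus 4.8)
The plan is to reduce the claimed Lipschitz estimate for $\psi'$ on the sub-level set $E_{\om^1}$ to the local Lipschitz estimate for $\Psi'$ provided by Assumption~\ref{ass:locallipschitz}, by exhibiting a compact set $\mathcal N \subset \dom\Psi = \operatorname{PD}(n)$ containing all the Fisher matrices $\fish(\om)+\fish_0$ with $\om \in E_{\om^1}$. First I would recall from~\eqref{eq:gradient_psi} that $\psi'(\om) = \fish^*(\Psi'(\fish(\om)+\fish_0))$, so that for $\om_1,\om_2 \in E_{\om^1}$ one has $\psi'(\om_1)-\psi'(\om_2) = \fish^*(\Psi'(\fish(\om_1)+\fish_0) - \Psi'(\fish(\om_2)+\fish_0))$. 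Using the explicit form of $\fish^*$ in~\eqref{eq:adjoint_fisher}, I would bound $\norm{\fish^*(A)}_{C(\Om)} = \sup_{x\in\Om}\abs{\partial S[\hat q](x)^\top A\, \partial S[\hat q](x)} \leq C_S^2 \norm{A}$, where $C_S = \sup_{x\in\Om}\norm{\partial S[\hat q](x)}_{\R^n} < \infty$ is finite because $\partial S[\hat q] \in C(\Om,\R^n)$ and $\Om$ is compact (Assumption~\ref{ass:Existenceofstate}). Thus $\fish^*$ is a bounded operator from $\operatorname{Sym}(n)$ into $C(\Om)$, and it remains only to control the difference of the $\Psi'$-terms.

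Next I would establish the crucial compactness of the image of $E_{\om^1}$ under $\om \mapsto \fish(\om)+\fish_0$. On $E_{\om^1}$ the total variation is bounded: from $\beta\mnorm{\om}\leq F(\om)\leq F(\om^1)$ we get $\mnorm{\om}\leq M_0 := F(\om^1)/\beta$. Since the entries of $\fish(\om)$ are given by $\pair{\partial_k S[\hat q]\,\partial_l S[\hat q], \om}$ with bounded integrands, the norms $\norm{\fish(\om)+\fish_0}$ are uniformly bounded on $E_{\om^1}$, so the image lies in a bounded subset of $\operatorname{Sym}(n)$. To obtain a \emph{compact} subset of $\operatorname{PD}(n)$ rather than merely a bounded one, I would additionally use that $F(\om)\leq F(\om^1)$ forces $\psi(\om)=\Psi(\fish(\om)+\fish_0)$ to stay bounded; since $\Psi$ is lower semicontinuous with $\dom\Psi=\operatorname{PD}(n)$ and blows up to $+\infty$ as eigenvalues approach the boundary of $\operatorname{PD}(n)$, the set $\{\,N\in\operatorname{Sym}(n) \mid \Psi(N)\leq F(\om^1),\ \norm{N}\leq R\,\}$ is a compact subset $\mathcal N$ of $\operatorname{PD}(n)$. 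This is the step I expect to be the main obstacle: one must argue rigorously that a uniform bound on $\Psi$ from above keeps the minimal eigenvalue uniformly bounded away from zero, i.e.\ that the sub-level sets of $\Psi$ (intersected with a bounded set) are compactly contained in $\operatorname{PD}(n)$. For the A and D criteria this is immediate since $\Psi(N)\to\infty$ as $\lambda_{\min}(N)\to 0^+$, and in general it follows from $(\textbf{A1})$ together with lower semicontinuity, but it does require the coercivity-type behavior of $\Psi$ near $\partial\operatorname{PD}(n)$, which is implicit rather than stated as a separate hypothesis.

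Finally, with $\mathcal N$ compact in $\dom\Psi$ I would invoke Assumption~\ref{ass:locallipschitz} to obtain a constant $L_{\mathcal N}$ with $\norm{\Psi'(N_1)-\Psi'(N_2)}\leq L_{\mathcal N}\norm{N_1-N_2}$ for $N_i=\fish(\om_i)+\fish_0 \in \mathcal N$, and combine the three estimates:
\begin{align*}
\norm{\psi'(\om_1)-\psi'(\om_2)}_{C(\Om)}
 &\leq C_S^2\,\norm{\Psi'(\fish(\om_1)+\fish_0)-\Psi'(\fish(\om_2)+\fish_0)} \\
 &\leq C_S^2\, L_{\mathcal N}\,\norm{\fish(\om_1)-\fish(\om_2)}
 \leq C_S^2\, L_{\mathcal N}\, C_S^2\,\mnorm{\om_1-\om_2},
\end{align*}
where in the last inequality I again use the boundedness of $\fish\colon M(\Om)\to\operatorname{Sym}(n)$ with operator norm controlled by $C_S^2$. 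Setting $L_{\om^1} = C_S^4\, L_{\mathcal N}$ yields~\eqref{lipschitz} and completes the proof.
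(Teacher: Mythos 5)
Your proposal is correct, and its main chain of estimates (factoring $\psi' = \fish^*\circ\Psi'\circ(\fish(\cdot)+\fish_0)$ and combining an operator bound for $\fish^*$, the local Lipschitz constant of $\Psi'$, and an operator bound for $\fish$) is exactly the paper's argument, with the explicit constants $C_S^2$ in place of the abstract operator norms. Where you genuinely differ is in how the crucial compact set $\mathcal{N}\subset\dom\Psi$ is produced. The paper observes that $E_{\om^1}$ is bounded and weak* closed (since $F$ is weak* lower semicontinuous), hence weak* compact, and takes $\mathcal{N}=\fish(E_{\om^1})$ to be the image of this set under the weak*-to-norm continuous map $\om\mapsto\fish(\om)+\fish_0$; compactness is then immediate, and $\mathcal{N}\subset\operatorname{PD}(n)$ because every element is the information matrix of a measure with finite $\psi$-value. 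You instead stay entirely in the finite-dimensional matrix space: you bound the image inside a ball of radius $R$ and inside the sub-level set $\left\{\,N\;|\;\Psi(N)\le F(\om^1)\,\right\}$ (using $\psi\le F$, which relies on $(\textbf{A3})$), and claim this intersection is compact in $\operatorname{PD}(n)$. The step you flag as the ``main obstacle'' is in fact not a gap: by $(\textbf{A1})$ one has $\Psi\equiv+\infty$ on $\operatorname{NND}(n)\setminus\operatorname{PD}(n)$, and lower semicontinuity $(\textbf{A4})$ at such a point $N$ says precisely that $\Psi(N_k)\to+\infty$ whenever $N_k\to N$; the boundary blow-up you want is thus not an implicit extra hypothesis but the literal content of lsc at points where the value is $+\infty$, so your sub-level set is closed, bounded, and contained in $\operatorname{PD}(n)$, hence a valid choice of $\mathcal{N}$. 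In terms of trade-offs, the paper's route is shorter and does not need the sub-level structure of $\Psi$, but invokes weak* compactness and the weak*-to-norm continuity of $\fish$; your route avoids all weak* topology in favor of elementary finite-dimensional arguments, at the cost of having to analyze the behavior of $\Psi$ near the boundary of its domain.
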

\begin{proof}
First we observe that $E_{\om^1}$ is convex, bounded, and weak* closed. Consequently the set of associated information matrices
\begin{align*}
\fish(E_{\om^1})=\left\{\,\fish(\om)+\fish_0\;|\;\om \in E_{\om^1}\,\right\}
\end{align*}
is compact.
For $\om_1, \om_2 \in E_{\om^1}$ we  obtain
\begin{multline*}
\|\psi'(\om_1)-\psi'(\om_2)\|_{C(\Om)}=\|\fish^*\Psi'(\fish(\om_1)+\fish_0)-\fish^*\Psi'(\fish(\om_2)+\fish_0)\| \\ \leq 
\|\fish^*\|_{\operatorname{Sym}(n)\rightarrow C(\Om)}
\|\Psi'(\fish(\om_1)+\fish_0)-\Psi'(\fish(\om_2)+\fish_0)\|
\\ \leq L_{\mathcal{I}(E_{\om^1})}
\|\fish^*\|_{\operatorname{Sym}(n)\rightarrow C(\Om)}
\|\fish(\om_1)-\fish(\om_2)\|
\\
\leq L_{\mathcal{I}(E_{\om^1})}
\|\fish^*\|_{\operatorname{Sym}(n)\rightarrow C(\Om)}
\|\fish\|_{M^+(\Om)\rightarrow \operatorname{Sym}(n)}
\mnorm{\om_1-\om_2},
\end{multline*}
completing the proof.
\end{proof}
Using this additional local regularity we obtain the following estimate on the growth
behavior of the function \(F\) at \(\omega^k\) in the search direction.
\begin{lemma} \label{lemma:descent}
Let Assumption~\ref{ass:locallipschitz} hold.
Let $\om^k \in \dom_{M^+(\Om)}\psi$ with $\mnorm{\om^k} \leq M_0$ and $v^k$
as in Lemma~\ref{lem:minimizeroflin} be given. Moreover, let $\om^{k+1/2}_s=(1-s)\om^k
+s v^k$ with $s\in [0,1]$ and~$\om^{k+1/2}_s \in E_{\om^k}$ be given. Then there holds
\begin{align*}
F_{M_0}(\om^{k+1/2}_s)-F_{M_0}(\om^{k})\leq -s \Phi(\om^k)+\frac{L_{\om^k}}{2} s^2  \mnorm{v^k -\om^k}^2,
\end{align*}
where $L_{\om^k}$ denotes the Lipschitz constant of $\psi'$ on $E_{\om^k}$.
\begin{proof}
By assumption there holds $F_{M_0}(\om^{k+1/2}_s)\leq F_{M_0}(\om^{k})$ and consequently $\om^{k+1/2}_s \in E_{\om^k}$.
Therefore we obtain 
\begin{multline*}
F_{M_0}(\om^{k+1/2}_s)-F_{M_0}(\om^{k}) 
= -s \pair{\psi'(\om^k), \om^k -v^k} \\
+ \beta\varphi_{M_0}(\mnorm{\om^{k+1/2}_s})
- \beta\varphi_{M_0}(\mnorm{\om^k})
+ \int_0^{s} \pair{\psi'(\om_{\sigma})-\psi'(\om^k),v^k-\om^k} \de \sigma ,
\end{multline*}
with $\om_{\sigma} = \om^k + \sigma (v^k-\om^k)$ for $\sigma\in[0,1]$.
Using the convexity of $\varphi_{M_0}(\mnorm{\cdot})$ we obtain
\begin{multline*}
-s \pair{\psi'(\om^k), \om^k-v^k}
 + \beta\varphi_{M_0}(\mnorm{\om^{k+1/2}_s}) - \beta\varphi_{M_0}(\mnorm{\om^k})
\\ \leq 
-s \left(\langle \psi'(\om^k), \om^k -v^k \rangle +\beta \varphi_{M_0}(\mnorm{\om^k})-\beta\varphi_{M_0}(\mnorm{v^k})\right),
\end{multline*}
where the right-hand side simplifies to $-s \Phi(\om^k)$.
Due to the Lipschitz continuity of $\psi'$ on $E_{\om^k}$ we get 
\begin{multline*}
\int_0 ^{s} \pair{\psi'(\om_{\sigma})-\psi'(\om^k),v^k-\om^k} \de \sigma \leq \mnorm{v^k-\om^k} 
\int_0 ^{s} \norm{\psi'(\om_{\sigma})-\psi'(\om^k)}_{C(\Om)} \de \sigma \\
  \leq  L_{\om^k} \mnorm{v^k-\om^k}^2
\int_0 ^{s} \! \sigma \mathrm{d} \sigma= \frac{L_{\om^k} s^2}{2} \mnorm{v^k -\om^k}^2.
\end{multline*}
Combining both estimates yields the result.
\end{proof}
\end{lemma}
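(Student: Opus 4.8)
The plan is to split $F_{M_0} = \psi + \beta\,\varphi_{M_0}(\mnorm{\cdot})$ and to estimate the smooth part and the convex penalty separately along the segment $\sigma \mapsto \om_\sigma = \om^k + \sigma(v^k - \om^k)$. The hypothesis $\om^{k+1/2}_s \in E_{\om^k}$ together with convexity of the sublevel set keeps the whole segment inside $\dom_{M^+(\Om)}\psi$, so $\psi$ is differentiable along it and the fundamental theorem of calculus gives
\[
\psi(\om^{k+1/2}_s) - \psi(\om^k) = \int_0^s \pair{\psi'(\om_\sigma),\, v^k - \om^k}\de\sigma = s\,\pair{\psi'(\om^k),\, v^k - \om^k} + \int_0^s \pair{\psi'(\om_\sigma) - \psi'(\om^k),\, v^k - \om^k}\de\sigma.
\]

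For the remainder I would invoke the local Lipschitz bound of Proposition~\ref{prop:locallipschitz}: since $\om_\sigma - \om^k = \sigma(v^k - \om^k)$, the duality of $C(\Om)$ and $M(\Om)$ gives $\pair{\psi'(\om_\sigma) - \psi'(\om^k), v^k - \om^k} \le L_{\om^k}\,\sigma\,\mnorm{v^k - \om^k}^2$, and $\int_0^s \sigma\,\de\sigma = s^2/2$ produces the quadratic term $\tfrac{L_{\om^k}}{2}s^2\mnorm{v^k - \om^k}^2$. For the penalty I would use that $\varphi_{M_0}(\mnorm{\cdot})$ is convex (the increasing convex $\varphi_{M_0}$ composed with the norm), so that $\varphi_{M_0}(\mnorm{\om^{k+1/2}_s}) \le (1-s)\varphi_{M_0}(\mnorm{\om^k}) + s\,\varphi_{M_0}(\mnorm{v^k})$, whence $\beta[\varphi_{M_0}(\mnorm{\om^{k+1/2}_s}) - \varphi_{M_0}(\mnorm{\om^k})] \le -\beta s[\varphi_{M_0}(\mnorm{\om^k}) - \varphi_{M_0}(\mnorm{v^k})]$.

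It remains to collect the terms linear in $s$ and to identify them with $-s\,\Phi(\om^k)$, which is the crux of the argument. Adding the first-order contributions yields $-s\big[\pair{\psi'(\om^k), \om^k - v^k} + \beta\varphi_{M_0}(\mnorm{\om^k}) - \beta\varphi_{M_0}(\mnorm{v^k})\big]$; since $\mnorm{\om^k} \le M_0$ we have $\varphi_{M_0}(\mnorm{\om^k}) = \mnorm{\om^k}$, so the bracket is precisely the value of the primal--dual gap realized by $v^k$ in Lemma~\ref{lem:minimizeroflin}, namely $\Phi(\om^k)$. Combined with the quadratic remainder this gives the claimed estimate. I expect the main obstacle to be this last identification: one has to carefully line up the two $\varphi_{M_0}$-differences and use both the supremum-realizing identity from Lemma~\ref{lem:minimizeroflin} and the normalization $\varphi_{M_0}(\mnorm{\om^k}) = \mnorm{\om^k}$. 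Without the passage to the coercive surrogate $\varphi_{M_0}$, the linear terms would not assemble into the gap $\Phi$, which is exactly why the modified problem~\eqref{def:designpropaux} is used in place of~\eqref{def:designprop}.
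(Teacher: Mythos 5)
Your proposal is correct and follows essentially the same route as the paper's own proof: the same splitting of $F_{M_0}$ into $\psi$ plus the convex penalty, the fundamental theorem of calculus along the segment with the Lipschitz bound from Proposition~\ref{prop:locallipschitz} for the quadratic remainder, convexity of $\varphi_{M_0}(\mnorm{\cdot})$ for the penalty, and the identification of the assembled linear terms with $-s\,\Phi(\om^k)$ via Lemma~\ref{lem:minimizeroflin} and $\varphi_{M_0}(\mnorm{\om^k})=\mnorm{\om^k}$. If anything, you are slightly more explicit than the paper about why the segment stays in $\dom_{M^+(\Om)}\psi$ and about the two ingredients needed in the final identification, which the paper compresses into ``the right-hand side simplifies to $-s\Phi(\om^k)$.''
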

In order to prove the main result we additionally need the following technical lemma.
\begin{lemma} \label{lem:auxisalpha}
Let $\om^k \in \dom_{M^+(\Om)}\psi$ with $\Phi(\om^k)>0$ be given. The
function 
\begin{align*}
W\colon (0,1]\rightarrow~ \R \cup \{-\infty \}
\end{align*}
 from \eqref{def:helpfunction} is
continuous on $(0,1)$. Furthermore, denoting by $s^k$ the step size from
\eqref{Def:Armijo}, there exists $\hat{s}^k \in [s^k, s^k/ \gamma]$ with
$W(\hat{s}^k)=\alpha$ if $s^k < 1$.
\end{lemma}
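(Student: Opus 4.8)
The plan is to restrict the objective to the search segment and reduce everything to a one-dimensional convexity argument. Set $g(s) = F_{M_0}(\om^k + s(v^k-\om^k))$, so that the function from \eqref{def:helpfunction} reads $W(s) = (g(0)-g(s))/(s\,\Phi(\om^k))$ with $\Phi(\om^k)>0$ fixed. Continuity of $W$ on $(0,1)$ is then equivalent to continuity of $g$ on $(0,1)$, and the existence of $\hat{s}^k$ will follow from the intermediate value theorem applied to the bracketing inequalities supplied by the step-size rule \eqref{Def:Armijo}.

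First I would establish continuity of $g$. The map $s \mapsto \om^k + s(v^k-\om^k)$ is affine and $F_{M_0} = \psi + \beta\varphi_{M_0}(\mnorm{\cdot})$ is convex, so $g$ is convex. To see that $g$ is finite on $[0,1)$, I would use linearity of $\fish$ to write $\fish(\om^k+s(v^k-\om^k)) + \fish_0 = (1-s)(\fish(\om^k)+\fish_0) + s(\fish(v^k)+\fish_0)$; since $\om^k\in\dom_{M^+(\Om)}\psi$ gives $\fish(\om^k)+\fish_0 \in \operatorname{PD}(n)$ while $\fish(v^k)+\fish_0\in\operatorname{NND}(n)$, the combination is positive definite for every $s\in[0,1)$, whence $g(s)<\infty$. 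A finite convex function is continuous on the interior of its domain, which yields continuity of $g$, and thus of $W$, on $(0,1)$.

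Next I would read off the bracket directly from \eqref{Def:Armijo}. Dividing that inequality by $s^k\Phi(\om^k)>0$ shows $W(s^k)\geq\alpha$. Since $s^k = \gamma^{n_k}$ with $n_k$ the smallest nonnegative integer satisfying \eqref{Def:Armijo}, and $s^k<1$ forces $n_k\geq 1$, minimality means that \eqref{Def:Armijo} fails at the exponent $n_k-1$, i.e. $W(s^k/\gamma) = W(\gamma^{n_k-1}) < \alpha$. As $\gamma\in(0,1)$ we have $s^k < s^k/\gamma \leq 1$, so $W(s^k)\geq\alpha > W(s^k/\gamma)$, and the intermediate value theorem provides $\hat{s}^k\in[s^k,s^k/\gamma]$ with $W(\hat{s}^k)=\alpha$.

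The main obstacle is the boundary case $s^k/\gamma = 1$ (that is, $n_k=1$), which sits outside the open interval on which continuity was shown and where $W$ may equal $-\infty$ (namely when $v^k\notin\dom\psi$). I would resolve this by verifying that $W$ extends continuously to $s=1$ as a map into $\R\cup\{-\infty\}$: if $\fish(v^k)+\fish_0\in\operatorname{PD}(n)$ then $\Psi$ is continuous there by (A2) and $g$ is continuous at $1$; otherwise $g(1)=+\infty$, and lower semicontinuity of $\Psi$ from (A4) forces $\liminf_{s\to 1^-}g(s)\geq\Psi(\fish(v^k)+\fish_0)=+\infty$, so $g(s)\to+\infty$ and $W(s)\to-\infty=W(1)$. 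With this extended continuity, choosing $\hat{s}^k = \sup\{\,s\in[s^k,1]\;:\;W(s)\geq\alpha\,\}$ and using that $W(s^k)\geq\alpha$ is finite while $W(1)<\alpha$ gives, by closedness of the superlevel set and right-continuity, $W(\hat{s}^k)=\alpha$, which completes the argument.
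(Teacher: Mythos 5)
Your proof is correct and follows essentially the same route as the paper's: show that the segment stays in $\dom_{M^+(\Om)}\psi$ for $s\in[0,1)$ via the positive-definiteness of the convex combination of information matrices, establish continuity of $W$ on $(0,1)$, use lower semicontinuity of $\Psi$ to conclude $W(s)\to-\infty$ as $s\to 1$ when $\fish(v^k)+\fish_0\notin\operatorname{PD}(n)$, and combine the bracket $W(s^k)\geq\alpha>W(s^k/\gamma)$ (obtained from minimality of the Armijo exponent $n_k$) with the intermediate value theorem on $[s^k,s^k/\gamma]$. Your only deviations are minor refinements of the paper's terser argument: you justify continuity through convexity of $s\mapsto F_{M_0}(\om^k+s(v^k-\om^k))$ rather than through $(\textbf{A2})$, and you make explicit the endpoint case $s^k/\gamma=1$ with $v^k\in\dom_{M^+(\Om)}\psi$, which the paper leaves implicit.
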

\begin{proof}
First, note that for $s\in [0,1)$ we have $\om_s = (1-s)\om^k + sv^k \in
\dom_{M^+(\Om)} \psi$ due to
\(\fish(\om_s) + \fish_0 = (1-s)\fish(\om^k)
+ s \theta_k \, \sensv(\hat{x}^k)\sensv(\hat{x}^k)^\top + \fish_0 \in \operatorname{PD}(n)\).
Furthermore, using Assumption~\ref{ass:designcrit} it can be verified that
\begin{align*}
W(s) = (F_{M_0}(\om_0)-F_{M_0}(\om_s))/(s\,\Phi(\om_0))
\end{align*}
is continuous on $s \in (0,1)$. Additionally, with lower semi-continuity of \(\Psi\), we verify that
\(W(s) \to -\infty\) for \(s \to 1\) in case that \(\fish(v^k) \not\in\dom \Psi\).
We conclude the proof by applying the mean value theorem on $[s^k, s^k/ \gamma] \subset
(0, 1]$, taking into account that $W(s^k) \geq \alpha > W(s^k/ \gamma)$.
\end{proof}
Combining the previous results we are able to prove sub-linear convergence of the presented algorithm.
\begin{theorem}\label{thm:weakconvergencerate}
Let the sequence $\om^k $ be generated by Algorithm~\ref{alg:conditionalgradient} using
the quasi-Armijo-Goldstein condition \eqref{Def:Armijo}. Then there exists at least one
weak* accumulation point $\optomb$ of $\om^k$ and every such point is an optimal solution
to \eqref{def:designprop}. Additionally there holds
\begin{align} \label{rate}
r_F(\om^k)\leq \frac{r_F(\om^1)}{1+q(k-1)}
\end{align}
with
\begin{align}
q=\alpha \min \left \{\, \frac{c_1}{L_{\om^1} (M_0 +c_2)^2},\; 1 \,\right\},
\end{align}
where $L_{\om^1}$ is the Lipschitz-constant of $\psi'$ on $E_{\om^1}$, $M_0=F(\om^1)/\beta$, $c_1=2\gamma(1-\alpha)r_F(\om_1)$ and a constant $c_2> 0$ with $\mnorm{v^k}\leq c_2$ for all $k$. 
\end{theorem}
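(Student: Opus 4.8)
The plan is to argue in three stages: establish boundedness of the iterates and hence the existence of weak* accumulation points, derive a per-iteration descent inequality that is ultimately quadratic in the residual $r_F(\om^k)$, and finally unroll this recursion into the stated $\mathcal{O}(1/k)$ rate. For the first stage I would invoke the monotonicity already recorded before the theorem, namely $F_{M_0}(\om^{k+1}) \leq F_{M_0}(\om^k) \leq F_{M_0}(\om^1)$; this gives $\mnorm{\om^k} \leq M_0$, places all iterates in the sub-level set $E_{\om^1}$, and guarantees $F = F_{M_0}$ along the sequence. Boundedness in $M(\Om)$ together with the sequential Banach--Alaoglu theorem (as in Proposition~\ref{prop:existence}) yields a weak* accumulation point $\optomb \in M^+(\Om)$, using that $M^+(\Om)$ is weak* closed. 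Optimality of any such point will follow a posteriori from the rate: once $r_F(\om^k) \to 0$, we have $F(\om^k) \to \min\eqref{def:designprop}$, and weak* lower semicontinuity of $F$ (Proposition~\ref{prop:properties}, item~5) forces $F(\optomb) \leq \min F$, whence $\optomb$ is optimal.

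The heart of the proof is the descent recursion. Combining the quasi-Armijo--Goldstein rule \eqref{Def:Armijo} with step~5 (which only decreases $F$) gives, after subtracting $F(\optomb)$,
\[
r_F(\om^{k+1}) \leq r_F(\om^k) - \alpha s^k \Phi(\om^k).
\]
The decisive point is a uniform lower bound on $s^k$. If $s^k = 1$ this is trivial; if $s^k < 1$, Lemma~\ref{lem:auxisalpha} furnishes $\hat{s}^k \in [s^k, s^k/\gamma]$ with $W(\hat{s}^k) = \alpha$, so that $F_{M_0}(\om^k) - F_{M_0}(\om^k + \hat{s}^k(v^k-\om^k)) = \alpha \hat{s}^k \Phi(\om^k)$. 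Inserting the growth estimate of Lemma~\ref{lemma:descent} at $s = \hat{s}^k$ (applicable, since the intermediate point lies in $E_{\om^k}$) and solving the resulting inequality for $\hat{s}^k$ yields $\hat{s}^k \geq 2(1-\alpha)\Phi(\om^k)/(L_{\om^k}\mnorm{v^k-\om^k}^2)$; bounding $L_{\om^k} \leq L_{\om^1}$ (as $E_{\om^k} \subset E_{\om^1}$) and $\mnorm{v^k-\om^k} \leq M_0 + c_2$ then gives
\[
s^k \geq \min\left\{\, 1,\; \frac{2\gamma(1-\alpha)\Phi(\om^k)}{L_{\om^1}(M_0+c_2)^2} \,\right\}.
\]

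For the last stage I would use $\Phi(\om^k) \geq r_F(\om^k)$ from \eqref{est:residual} together with the step-size bound. Writing $a_k = r_F(\om^k)$ and $a = 2\gamma(1-\alpha)/(L_{\om^1}(M_0+c_2)^2)$, the two cases merge into $a_{k+1} \leq a_k - \alpha a_k \min\{1, a\,a_k\}$. Since $a_k \leq a_1 = r_F(\om^1)$ and the map $t \mapsto \min\{1, at\}$ is concave and vanishes at $t=0$, one has $\min\{1, a\,a_k\} \geq (a_k/a_1)\min\{1, a\,a_1\}$, which collapses everything into the single quadratic recursion $a_{k+1} \leq a_k - (q/a_1)\,a_k^2$ with $q = \alpha\min\{1, a\,a_1\} = \alpha\min\{c_1/(L_{\om^1}(M_0+c_2)^2),\,1\}$, recalling $c_1 = 2\gamma(1-\alpha)r_F(\om^1)$. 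Taking reciprocals, $1/a_{k+1} \geq 1/a_k + q/a_1$, and telescoping yields $a_k \leq a_1/(1 + q(k-1))$, i.e.\ exactly \eqref{rate}; the edge case $\Phi(\om^k) = 0$ (optimal iterate) makes the bound trivial.

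I expect the main obstacle to be the step-size lower bound and its coupling to the residual. One must simultaneously cover the full-step regime $s^k = 1$ (geometric decrease) and the fractional-step regime, and then fuse them so that the final constant $q$ takes precisely the stated form; the concavity inequality that rescales $\min\{1, a\,a_k\}$ by the ratio $a_k/a_1$ is the key algebraic device producing the clean $1/k$ bound rather than a weaker one. A secondary point requiring care is verifying that Lemma~\ref{lemma:descent} genuinely applies at the intermediate point $\hat{s}^k$, i.e.\ that $W(\hat{s}^k) = \alpha > 0$ keeps it inside the sub-level set.
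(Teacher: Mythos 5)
Your proposal is correct and follows essentially the same route as the paper's proof: the same descent inequality from \eqref{Def:Armijo} combined with step~5, the same step-size lower bound obtained by feeding Lemma~\ref{lemma:descent} into the mean-value point $\hat{s}^k$ from Lemma~\ref{lem:auxisalpha}, the same use of \eqref{est:residual} to pass from $\Phi(\om^k)$ to $r_F(\om^k)$, and the same a~posteriori optimality argument for weak* accumulation points via lower semicontinuity. The only differences are presentational: you merge the cases $s^k=1$ and $s^k<1$ through the concavity of $t\mapsto\min\{1,at\}$ and unroll the quadratic recursion by telescoping reciprocals, whereas the paper keeps the two cases separate with constants $q_k$ and invokes \cite[Lemma~3.1]{dunn1980convergence} for the final $\mathcal{O}(1/k)$ bound.
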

\begin{proof}
Assume without restriction that $\Phi(\om^k)>0$, i.e.\ the algorithm
does not terminate after finitely many steps. By construction and the choice of $s^k$
there holds $\om^k\in E_{\om^1}$ and consequently $\mnorm{\om^k} \leq M_0$,
$F_{M_0}(\om^k)=F(\om^k)$ for all $k$. The same can be proven for
$\om^{k+1/2}$. Note that $\om^k$ is bounded and $\psi'$ is weak*-to-strong
continuous. Therefore, there exists $c_2>0$ with $\mnorm{v^k}\leq c_2$ for all  $k$ s.
 
By the definition of the step size $s^k$ as well as
\eqref{est:residual} there holds
\begin{align*}
\alpha s^k r_F(\om^k)\leq \alpha s^k \Phi(\om^k)\leq r_F(\om^k)-r_F(\om^{k+1^/2}),
\end{align*}
which yields
\begin{align} \label{descentinresidual}
r_F(\om^{k+1/2})\leq(1-\alpha s^k) r_F(\om^k).
\end{align}
Since $\Phi(\om^k)>0$ we obtain $s^k \neq 0$ for all $k$. Two cases have to be distinguished. If $s^k$ is equal to one we immediately arrive at
\begin{align*}
r_F({\om}^{k+1/2})\leq(1-\alpha) r_F(\om^k)\leq r_F(\om^k) -\alpha \frac{r_F(\om^k)^2}{r_F(\om^1)}.
\end{align*}
In the second case, if $s^k< 1$, there exists $\hat{s}^k \in [s^k, s^k / \gamma]$ with 
\begin{align*}
\alpha=\frac{F(\om^k)-F(\om^k +\hat{s}^k(v^k-\om^k))}{\hat{s}^k \Phi(\om^k)},
\end{align*}
using Lemma \ref{lem:auxisalpha}. Consequently $\om^k +s(v^k-\om^k)\in E_{\om^1}$ for all $0\leq s\leq \hat{s}^k$ due to the convexity of $F$.
Because of the Lipschitz-continuity of $\psi'$ on $E_{\om_1}$, Lemma~\ref{lemma:descent}
can be applied and, defining $\delta \om^k =v^k-\om^k$, there holds
\begin{align*}
\alpha = \frac{F(\om^k)-F(\om^{k}+\hat{s}^k\delta\om^k)}{\hat{s}^k\Phi(\om^k)}
\geq 1- \frac{L_{\om^1} \hat{s}^k}{2}  \frac{\mnorm{\delta \om^k}^2}{\Phi(\om^k)}
\geq 1- \frac{L_{\om^1} s^k}{2\gamma}  \frac{\mnorm{\delta \om^k}^2}{\Phi(\om^k)}.
\end{align*}
The last estimate is true because of $\hat{s}^k \leq s^k/\gamma$.  
Reordering and using~\eqref{est:residual} yields
\begin{align*}
1\geq s^k \geq 2 \gamma (1-\alpha) \frac{\Phi(\om^k)}{L_{\om^1} \mnorm{v^k -\om^k}^2}\geq 2 \gamma (1-\alpha) \frac{r(\om^k)}{L_{\om^1} \mnorm{v^k -\om^k}^2}.
\end{align*}
Combining the estimates in both cases and using $r_F(\om^{k+1})\geq r_F(\om^{k+1/2})$, the inequality
\begin{align}\label{recursion}
0\leq \frac{r_F(\om^{k+1})}{r_F(\om^1)}\leq \frac{r_F(\om^{k+1/2})}{r_F(\om^1)}\leq \frac{r_F(\om^{k})}{r_F(\om^1)}-q_k \left ( \frac{r_F(\om^{k})}{r_F(\om^1)}\right )^2\quad \forall k\in \mathbb{N}
\end{align}
holds, where the constant $q_k$ 
is given by
\begin{align*}
q_k=r_F(\om^1) \alpha \min \left \{ \frac{2 \gamma (1-\alpha)}{L_{\om^1} \mnorm{v^k -\om^k}^2} ,\frac{1}{r_F(\om^k)}\right \} ,
\end{align*}
if $s^k<1$ and $q_k=\alpha$ otherwise. We estimate
\begin{align*}
q_k\geq \alpha \min \left \{ \frac{2 \gamma (1-\alpha)r_F(\om^1)}{L_{\om^1} (M_0+c_2)^2} ,1\right \}=:q.
\end{align*}
The claimed convergence rate \eqref{rate} now follows directly from the recursion
formula~\eqref{recursion}; see~\cite[Lemma~3.1]{dunn1980convergence}. \changed{ Consequently, each
subsequence of $\om^k$ is a minimizing sequence. Since~$\om^k$ is bounded, it admits at least one weak* accumulation point. Due to the derived convergence rate and the weak* lower semi-continuity
of $F$ each weak* accumulation point
$\optomb$ is a minimizer of~\eqref{def:designprop}}.
\end{proof}

\subsection{Acceleration and sparsification strategies}
\label{sec:acceleration}
As we have seen in the previous section, an iterative application of steps~2.--4.\ in
Algorithm~\ref{alg:conditionalgradient} is sufficient to obtain weak* convergence of the
iterates $\om^k$, as well as a sublinear convergence rate for the objective function. However, it is
obvious that the support size of the iterates $\om^k$ grows monotonically in every
iteration unless the current gradient is bounded from below by $-\beta$ or, more unlikely,
the step size $s^k$ is chosen as $1$. Therefore, while the implementation of
steps~2.--4.\ is fairly easy, an algorithm only consisting of point insertion steps will likely yield iterates
with undesirable sparsity properties, e.g., a clusterization of the intermediate support
points around the support points of a minimizer to \eqref{def:designprop}. 
In the following we mitigate those effects by augmenting the point insertion steps by
point removal steps, where we incorporate ideas from \cite{bredies2013inverse, boyd2015alternating}.
\changed{For \(\{x_j\}_{j=1}^{m_k} = \supp \om^{k+1/2}\), we define the parameterization:
\begin{equation}
\label{eq:measure_on_active_set}
 \omparam(\lambda) := \sum_{j=1}^{m_k} \lambda_j \delta_{x_j} 
 \quad\forall \lambda \in \R^{m_k}.
\end{equation}
}
Now, we set $\om^{k+1} = \omparam(\lambda^{k+1})$,
where the improved vector $\lambda^{k+1} \in \R^{m_k}$ is chosen as an approximate
solution to the (finite dimensional) coefficient optimization problem
\begin{equation} \label{def:subproblem}
 \min_{\lambda\in \R^{m_k},~\lambda \geq 0} F(\omparam(\lambda))
 = \psi(\omparam(\lambda)) + \beta\|\lambda\|_1,
\end{equation}
that fulfills $F(\om^{k+1}) \leq F(\om^{k+1/2})$.
In this manuscript, we focus on two special instances of this removal step, which are
detailed below.

In the first strategy, the new coefficient vector
$\lambda^{k+1}=\lambda^{k+1}({\sigma_k})$ is obtained by
\begin{align} \label{projectedgradient}
\lambda^{k+1}({\sigma_k})_j
  = \max \left\{\,\lambda^{{k+1/2}}_i
  - \sigma_k \left[\psi'(\om^{k+1/2})(x_{j})+\beta\right],\;0\,\right\}
\quad \forall j\in \{1,\dots, m_k\}, 
\end{align}
where $\sigma_k >0$ is a suitably chosen step size that avoids ascent in the objective
function value. This corresponds to performing one step of a projected gradient method on
\eqref{def:subproblem} using the previous coefficient vector $\lambda^{k+1/2}$ as a starting
point. Thus, step 5.\ in Algorithm~\ref{alg:conditionalgradient}
subtracts or adds mass at support point $x_j$ for
$-\psi'(\om^{k+1/2})(x_j) < \beta$ or $-\psi'(\om^{k+1/2})(x_j) > \beta$, respectively.
Furthermore, the new coefficient $\lambda^{k+1}_j$ of the Dirac delta function
$\delta_{x_j}$ is set to zero if
\begin{align*}
 \lambda^{{k+1/2}}_j - \sigma_k \left[\psi'(\om^{k+1/2})(x_{j})+\beta\right] \leq 0,
\end{align*}
removing the point measure from the iterate.

Secondly, we suppose that the finite-dimensional sub-problems \eqref{def:subproblem} can be solved exactly and choose
\begin{align} \label{PDAP}
 \lambda^{k+1}\in \argmin_{\lambda\in \R^{m_k},~\lambda \geq 0} F(\omparam(\lambda)).
\end{align}
In this case, the conditions
\begin{align*}
\supp\om^{k+1} \subset \supp\om^{k+1/2}, \quad F(\om^{k+1}) \leq F(\om^{k+1/2})
\end{align*}
are trivially fulfilled.
If all finite dimensional sub-problems are solved exactly, the method can be interpreted
as a method operating on a set of active points \(\mathcal{A}_k = \supp \omega^k\);
cf.~\cite{walter2017Helmholtz}: In each iteration, the minimizer \(\hat{x}^k\) of the
current gradient \(\psi'_k\) is added to the support set to obtain
\(\mathcal{A}_{k+1/2} = \mathcal{A}_{k} \cup \set{\hat{x}^k}\). Then, the
problem~\eqref{PDAP} is solved on the new support set (i.e.\ with $\supp \omega^{k+1/2}$
replaced by \(\mathcal{A}_{k+1/2}\) in the definition of~\eqref{eq:measure_on_active_set})
to obtain the next iterate \(\omega^{k+1}\). Note that the next active
set is given by \(\mathcal{A}_{k+1} = \supp \omega^{k+1}\), which automatically removes support points
corresponding to zero coefficients in each iteration.

\changed{Finally, the proof of Lemma \ref{lem:repres} leads to an implementable sparsifying procedure which, given an arbitrary finitely supported positive measure, finds a sparse measure choosing a subset of at most $n(n+1)/2$ support points and yielding the same information matrix at a smaller cost. The procedure is summarized in Algorithm \ref{alg:postprocessing}. Applying this method to the intermediate iterate $\om^{k+1/2}$ in step 5.\ of
Algorithm~\ref{alg:conditionalgradient} guarantees the a~priori bound \(\#\supp \omega^k \leq n(n+1)/2\) as well as the convergence of the presented
procedure towards a sparse minimizer of \eqref{def:designprop}.}
 \begin{proposition} \label{prop:postprocconepoint}
 Let $\om=\sum_{j=1}^m \lambda_j  \delta_{x_j}$ be given and assume that $\{\fish(\delta_{x_j})\}^m_{j=1}$ is linearly dependent.
 Denote by $\om_{\mathrm{new}}=\sum_{\{\, j\;|\;\lambda_{\mathrm{new},j}>0\,\}} \lambda_{\mathrm{new},j} \delta_{x_j}$ the measure that is obtained after one execution of the loop in Algorithm \ref{alg:postprocessing}. Then there holds 
 \begin{align*}
 F(\om_{\mathrm{new}})\leq F(\om),\quad \# \supp \om_{\mathrm{new}} \leq \# \supp \om -1.
 \end{align*}
 \end{proposition}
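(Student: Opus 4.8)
The plan is to recognize that Proposition~\ref{prop:postprocconepoint} is essentially a corollary of Lemma~\ref{lem:repres}, augmented only by the observation that the full objective functional $F$ does not increase. A single execution of the loop in Algorithm~\ref{alg:postprocessing} is designed to implement exactly the construction carried out in the proof of Lemma~\ref{lem:repres}: under the assumed linear dependence of $\{\fish(\delta_{x_j})\}_{j=1}^m$, one determines a nontrivial coefficient vector $\gamma$ solving $\sum_{j=1}^m \gamma_j \fish(\delta_{x_j}) = 0$, normalized so that $\sum_{j=1}^m \gamma_j \geq 0$, sets $\mu = \max_{j} \gamma_j/\lambda_j > 0$, and defines $\om_{\mathrm{new}} = \om - (1/\mu)\sum_{j=1}^m \gamma_j \delta_{x_j}$, whose coefficients are the $\lambda_{\mathrm{new},j}$ appearing in the statement.

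First I would invoke the three conclusions established in Lemma~\ref{lem:repres}, equation~\eqref{eq:sparseonce}, applied to this construction. They yield directly that $\fish(\om_{\mathrm{new}}) = \fish(\om)$, that $\mnorm{\om_{\mathrm{new}}} = \mnorm{\om} - \sum_j \gamma_j/\mu \leq \mnorm{\om}$, and that the coefficient indexed by any $\hat{\jmath} \in \argmax_j \gamma_j/\lambda_j$ vanishes, so that $\#\supp\om_{\mathrm{new}} \leq \#\supp\om - 1$. The support reduction is thus immediate and requires no additional work.

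It then remains to control $F(\om) = \psi(\om) + \beta\mnorm{\om}$. Here I would exploit the fact that the Fisher information is \emph{exactly} preserved: since $\psi(\om) = \Psi(\fish(\om) + \fish_0)$ depends on $\om$ only through $\fish(\om)$, the identity $\fish(\om_{\mathrm{new}}) = \fish(\om)$ gives $\psi(\om_{\mathrm{new}}) = \psi(\om)$. Combining this with the norm inequality above yields
\begin{equation*}
F(\om_{\mathrm{new}}) = \psi(\om) + \beta\mnorm{\om_{\mathrm{new}}} \leq \psi(\om) + \beta\mnorm{\om} = F(\om),
\end{equation*}
which completes the argument.

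The only genuine subtlety — rather than a real obstacle — is to verify that the explicit steps of Algorithm~\ref{alg:postprocessing} reproduce precisely the measure $\widetilde{\om}$ of Lemma~\ref{lem:repres}: in particular that the sign normalization $\sum_j \gamma_j \geq 0$ guarantees $\mu > 0$ (so $\om_{\mathrm{new}} \in M^+(\Om)$ with all $\lambda_{\mathrm{new},j} \geq 0$), and that the index attaining the maximum in the definition of $\mu$ indeed receives coefficient zero. Once this bookkeeping is in place, the result follows with no further estimates, since the nontrivial content — the Carath{\'e}odory-type reduction and the monotonicity of the total variation norm — has already been established in Lemma~\ref{lem:repres}.
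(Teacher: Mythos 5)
Your proof is correct and follows essentially the same route as the paper's: the paper likewise reduces the proposition to the three conclusions of Lemma~\ref{lem:repres} (preservation of $\fish(\om)$, non-increase of $\mnorm{\om}$, and removal of at least one support point), with the inequality $F(\om_{\mathrm{new}})\leq F(\om)$ then following immediately since $\psi$ depends on $\om$ only through $\fish(\om)$. Your additional bookkeeping check that Algorithm~\ref{alg:postprocessing} reproduces the construction from the lemma's proof is a reasonable (if brief) elaboration of what the paper leaves implicit.
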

\begin{proof}
\changed{ We point to the proof of Lemma~\ref{lem:repres} which gives
\begin{align*}
\fish(\om_{\mathrm{new}})=\fish(\om), \quad \mnorm{\om_{\mathrm{new}}}\leq \mnorm{\om},
  \quad \# \supp \om_{\mathrm{new}} \leq \# \supp \om -1.
\qquad\qedhere
\end{align*}}
\end{proof}
\begin{algorithm}
\caption{Support-point removal}
\label{alg:postprocessing}
\begin{algorithmic}
 \STATE 1. Let $\om=\sum_{j=1}^m \lambda_j \delta_{x_j} $ be given.
 \WHILE {$\left\{\fish(\delta_{x_j})\right\}^m_{j=1}$ linearly dependent}
 
 \STATE 2. Find $0\neq\gamma$ with $0=\sum^m_{j=1}\gamma_j \fish(\delta_{x_j})$
 and \changed{$\sum^m_{j=1} \gamma_j \geq 0$ (see section~\ref{sec:impl_sparsification}).}
         
 \STATE 3. Set $\mu=\max_{j}\{\,\gamma_j/\lambda_j \,\} $, $\lambda_{\mathrm{new},j}=\lambda_j-\gamma_j/\mu$.
 
 \STATE 4. Update $\om_{\mathrm{new}}=\sum_{\{\, j\;|\;\lambda_{\mathrm{new},j}>0\,\}} \lambda_{\mathrm{new},j} \delta_{x_j}$.
 \ENDWHILE
\end{algorithmic}
\end{algorithm}
\begin{proposition} \label{prop:sparsifying}
Assume that $\#\supp \om^1 \leq n(n+1)/2$ and let $\om^{k+1}$ be obtained by applying
Algorithm~\ref{alg:postprocessing} to $\om^{k+1/2}$ in each iteration of
Algorithm~\ref{alg:conditionalgradient}. Then the results of
Theorem~\ref{thm:weakconvergencerate} hold. Furthermore we obtain $\#\supp \om^{k}\leq n(n+1)/2$
for all $k \in \N$ and consequently $\#\supp \optomb\leq n(n+1)/2$ for every weak*
accumulation point $\optomb$ of $\om^k$.
\end{proposition}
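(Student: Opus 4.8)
The plan is to check that Algorithm~\ref{alg:postprocessing} is an admissible realization of step~5 of Algorithm~\ref{alg:conditionalgradient}, so that Theorem~\ref{thm:weakconvergencerate} applies without modification, and then to control the support cardinality by induction together with the weak* closedness of $M^+_{n(n+1)/2}(\Om)$.

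First I would argue that the while loop in Algorithm~\ref{alg:postprocessing} terminates and that its output $\om^{k+1}$ satisfies the two requirements of step~5. Each pass of the loop only deletes support points (by the update in step~4) and does not increase $F$; the latter is precisely Proposition~\ref{prop:postprocconepoint} applied to the measure at the start of each pass. Since $\#\supp$ strictly decreases per pass, the loop halts after finitely many steps, and the output obeys $\supp\om^{k+1}\subseteq\supp\om^{k+1/2}$ together with $F(\om^{k+1})\leq F(\om^{k+1/2})$. Hence all hypotheses of Theorem~\ref{thm:weakconvergencerate} are met and its conclusions --- existence and optimality of weak* accumulation points and the sublinear rate~\eqref{rate} --- transfer verbatim.

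Next I would establish $\#\supp\om^k\leq n(n+1)/2$ by induction on $k$, the base case holding by assumption. For the inductive step, steps~2--4 insert at most one Dirac delta, so $\#\supp\om^{k+1/2}\leq\#\supp\om^k+1$. The decisive observation is that Algorithm~\ref{alg:postprocessing} terminates only once $\{\fish(\delta_{x_j})\}_j$ is linearly independent; as these matrices lie in $\operatorname{Sym}(n)\simeq\R^{n(n+1)/2}$, an independent family has at most $n(n+1)/2$ members. Therefore $\#\supp\om^{k+1}\leq n(n+1)/2$, irrespective of whether the intermediate iterate momentarily reaches $n(n+1)/2+1$ points, in which case the corresponding matrices are automatically dependent (more vectors than the dimension of $\operatorname{Sym}(n)$) and at least one removal is triggered.

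Finally, since every iterate lies in $M^+_{n(n+1)/2}(\Om)$, which is weak* closed by Lemma~\ref{lem:weakclos}, every weak* accumulation point $\optomb$ inherits the bound $\#\supp\optomb\leq n(n+1)/2$. I expect the only mild subtlety to be the bookkeeping in the inductive step, namely that a temporary overshoot of the intermediate support is always corrected by the automatic linear dependence; everything else reduces to already-established results.
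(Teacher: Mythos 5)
Your proposal is correct and follows essentially the same route as the paper's proof: an inductive use of Proposition~\ref{prop:postprocconepoint} (equivalently, the termination condition of the while loop) combined with $\dim\operatorname{Sym}(n)=n(n+1)/2$ to bound the support, and Lemma~\ref{lem:weakclos} to pass the bound to weak* accumulation points. Your additional verification that Algorithm~\ref{alg:postprocessing} terminates and is an admissible realization of step~5 (so that Theorem~\ref{thm:weakconvergencerate} applies) is exactly the bookkeeping the paper leaves implicit, and it is carried out correctly.
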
  
\begin{proof}
\changed{The statement for the support of $\om^k$ readily follows from an inductive application of
Proposition~\ref{prop:postprocconepoint} by noting that
\(\fish(\delta_{x_j}) \in \operatorname{Sym}(n)\) and $\operatorname{dim}\operatorname{Sym}(n) = n(n+1)/2$.
The sparsity statement for every accumulation point $\optom$ follows then directly from Lemma~\ref{lem:weakclos}.}
\end{proof}
We emphasize that the sparsifying procedure from Algorithm~\ref{alg:postprocessing} can be
readily combined with the previously presented point removal steps in a straightforward
fashion. In practical computations we optimize the coefficients of the Dirac delta functions in the
current support either by \eqref{projectedgradient} or \eqref{PDAP} obtaining an
intermediate iterate $\om^{k+3/4}$. Subsequently we apply
Algorithm~\ref{alg:postprocessing}. Since in both cases, the number of support points
cannot increase, the statements of the last proposition remain true.

\subsection{Computation of the sparsification steps}
\label{sec:impl_sparsification}
It remains to comment on the computational \changed{aspects of the} point removal
steps presented in this section. 
First, we address the approximate
solution of the finite dimensional subproblems.
If $\lambda^k$ is determined from \eqref{PDAP}, we have to solve a finite-dimensional
convex optimization problem in every iteration. Since the most common choices for the
optimal design criterion $\Psi$ are twice continuously differentiable, we choose to implement a
semi-smooth Newton method; \changed{see, e.g., \cite{milzarekfilter}}. To benefit from the fast local convergence behavior for this
class of methods we warm-start the algorithm using the coefficient vector $\lambda^{k+1/2}$
of the intermediate iterate $\om^{k+1/2}$. This choice of the starting point often gives a
good initial guess for $\lambda^{k+1}$. However, we note that essentially any
algorithm for smooth convex problems with non-negativity constraints on the optimization
variables can be employed instead.

Finally, we consider the application of
Algorithm~\ref{alg:postprocessing}, given a sparse input measure $\om$ with 
$\supp \om = \{x_i\}_{i=1}^m$. Step~1.\ amounts to the computation of the symmetric rank
one matrices ${\{\fish(\delta_{x_i})\}_{i=1}^m\subset\operatorname{NND}(n)}$, which we
identify with vectors $\{\boldsymbol{I}(\delta_{x_i})\}_{i=1}^m \subset \R^{n(n+1)/2}$. Additionally, in
each execution of the loop step~2.\ has to be executed, which requires to compute
a vector $\gamma$ in the kernel of the matrix $\boldsymbol{I}(\omega) \in \R^{n(n+1)/2\times m}$, defined by
\begin{align*}
 [\boldsymbol{I}(\omega)]_{i,j} = \boldsymbol{I}(\delta_{x_j})_i, \quad
  i=1,\ldots,n(n+1)/2, \; j=1,\ldots,m.
\end{align*}
This can be done efficiently employing either a SVD-decomposition or a rank-revealing
QR-decomposition. \changed{Since \(\gamma\) is only determined up to a scalar
  multiple, it can be chosen with \(\sum_{j=1}^m \gamma_j \geq 0\). }
Furthermore, assuming that Algorithm~\ref{alg:postprocessing} is applied to $\om^{k+1/2}$
for every $k$, this loop will run at most once in each iteration.
\changed{This follows since each iteration starts with a support set such that
  $\boldsymbol{I}(\omega^k)$ is of full rank, and the point insertion step either
  maintains full rank, or adds a
  linearly dependent vector to $\boldsymbol{I}(\omega^{k+1/2})$. In the latter case the
  removal of at least one support point
  yields again full rank in the next iteration.}

\section{Numerical example} \label{sec:Numerics}
\changed{We end this paper with the study of a numerical example. In the following, we
consider the unit square $\bar{\Omega}=\Om=[0,1]^2$ and a family $\{\mathcal{T}_h\}_{h>0}$
of uniform triangulations of $\Om$, where~$h$ denotes the maximal diameter of a cell~$K
\in \mathcal{T}_{h}$. The set of associated grid nodes is called~$\mathcal{N}_h$. Concretely, we
consider a sequence of successively refined grids with $h_k=\sqrt{2}/2^k$,~$k
\in \set{1,2,\ldots,9}$. The state and sensitivity equations, respectively, are
discretized by linear finite elements on~$\mathcal{T}_h$ and the solutions to the discretized sensitivity equations are denoted by~$\{\partial_k S^h[\hat{q}]\}^n_{k=1}$.
Moreover, $M^+(\Om)$ is replaced by positive linear combinations of nodal Dirac delta function
\begin{align*}
M^+_h:=\set{\om_h \in M^+(\Om) | \supp \om_h\subset \mathcal{N}_h} = M^+(\mathcal{N}_h).
\end{align*}
The discrete design problem is now stated as 
\begin{align} \label{def:fulldisc}
\min_{\om \in M^+_h} &\Psi(\fish_h(\om)+\fish_0) + \beta \mnorm{\om}, \\\quad
  \text{where }&\fish_h(\om) = \int_{\Om} \partial S^h[\hat{q}](x) \partial S^h[\hat{q}](x)^\top \de \om(x).\nonumber
\end{align}
A solution~$\optombh \in M^+_h$ to~\eqref{def:fulldisc} is computed by the different variants of Algorithm~\ref{alg:conditionalgradient} where the search for the new position~$\hat{x}^k$ in step 2. is restricted to~$N_h$. For abbreviation we again define the reduced design criterion~$\psi_h(\om)= \Psi(\fish_h(\om))$.
}

\changed{
Our aim in this
section is twofold. First, we want to numerically illustrate the theoretical results. Secondly, we
want to study the practical performance of the proposed algorithms according to various criteria including
the computational time, the evolution of the sparsity pattern throughout the iterations
and the influence of the fineness of the triangulation.
Concretely, we consider the A-optimal design problem,
i.e.~$\Psi(N) = \Tr(N^{-1})$ and the discrete state and the associated
sensitivities \(\partial S^h[\hat{q}]\) are computed for a fixed \(\hat{q}\) once at the beginning. During the
execution of the different variants of Algorithms~\ref{alg:conditionalgradient} no additional PDEs need to be solved. Moreover, the gradient of
the reduced cost functional is given by
\[
\left[\psi'_h(\omega)\right](x)
= - \Tr(\fish_h(\omega)^{-1}\fish_h(\delta_x)\fish_h(\omega)^{-1})
= - \norm{\fish_h(\omega)^{-1}\partial S^h[\hat{q}](x)}^2_{\R^n}
\quad \forall x \in \Omega,
\]
which relates the pointwise value of the gradient directly to the corresponding
sensitivity vector \(\partial S^h[\hat{q}](x) \in \R^n\). A corresponding
computation on the discrete level allows for an efficient implementation based
on a single Cholesky-decomposition of \(\fish^h(\omega)\) in each iteration. Moreover an expression for the Hessian-vector-product
\(\left[\psi_h''(\omega)(\delta\omega)\right](x)\) for \(\delta\omega \in M(\Omega)\) can be
derived by differentiating the above expression.
\begin{remark}
It is possible to show that every solution~$\optombh \in M^+(\Om)\cap M_h$ to~\eqref{def:fulldisc} is also a mininimizer of the semi-discrete problem
\begin{align} \label{def:semidisc}
\min_{\om \in M^+(\Om)} \psi_h(\om) + \beta \mnorm{\om}
\end{align}
where the space of possible designs is not discretized. This corresponds to the variational discretization paradigm; cf~\cite{casas2012}. In particular, proceeding as for the fully continuous problem, a measure~$\optombh \in M^+(\Om)$ is an optimal solution to~\eqref{def:fulldisc} if and only if
\begin{align*}
-\psi'_h(\optombh) \leq \beta, \quad
\operatorname{supp} \optombh \subset \left\{ x\in \Om\;|\;\psi'_h(\optombh)(x)=-\beta \right\}. 
\end{align*}
Since the main focus of the present paper lies on the description of the sparse sensor placement problem and its efficient solution, we postpone a detailed discussion of the discretization to a follow-up paper. 
\end{remark}}
\subsection{Estimation of convection and diffusion parameters} \label{ex:3param}
As an example for the state equation~\eqref{stateequation}, we take a convection-diffusion
process where for a given $q \in Q_{ad} = \{\,q \in \R^3 \;|\; 0.25 \leq q_1 \leq 5\,\}$ the associated
state $y = S[q] \in H^1_0(\Omega)\cap C(\Om)$ is the unique solution to
\begin{align} \label{3param}
a(q,y)(\varphi)
  = \int_{\Omega} \left\lbrack q_1  \nabla y \cdot \nabla \varphi
 + q_2 \varphi \frac{\partial y}{\partial x_1}
 + q_3 \varphi \frac{\partial y}{\partial x_2} \right\rbrack \de x
  = \int_{\Omega} f \varphi \de x,
\end{align}
for all \(\varphi \in \Heins\).
The forcing term \(f\) is chosen as $\operatorname{exp}(3(x^2_1+x_2^3))$. This corresponds to the linear elliptic equation
\begin{align*}
-q_1 \Lap y +\left(\begin{array}{c}q_2\\q_3\end{array}\right)\cdot \nabla y=f \quad \text{in}~ \Omega,
\end{align*}
together with homogeneous Dirichlet boundary conditions on $\partial \Omega$. Here, the
parameter $q$ contains the scalar diffusion and convection coefficients of the
elliptic operator. As a~priori guess for the parameter we choose
$\hat{q}=(3,0.5,0.25)^\top$. Note that while~\eqref{3param} is a linear
equation, the state $y \in \Heins \cap C(\Om)$ depends non-linearly but differentiably on
$q$. For each $k \in \set{1,2,3}$ the sensitivity $\delta{y}_k = \partial_k S[\hat{q}]\in \Heins \cap C(\Om)$
can be computed from~\eqref{sensitivity}.
Due to the tri-linearity of the form \(a(\cdot,\cdot)(\cdot)\) it fulfills
\begin{align*}
a(\hat{q}, \delta{y}_k)(\varphi) = -a(\textbf{e}_k,\hat{y})(\varphi)
\quad\forall\varphi\in \Heins,
\end{align*}
where \(\hat{y} = S[\hat{q}]\) and $\textbf{e}_k \in \R^3$ denotes the $k$-th canonical unit vector.
\subsubsection{First order optimality condition} \label{subsec:firstorder}
\changed{In this section we numerically illustrate the
first-order necessary and sufficient optimality conditions from Proposition~\ref{prop:equivalence}. Therefore
we compute an A-optimal design for Example~1 on grid level nine $\mathcal{T}_{h_9}$ for $\beta=1$ and
$\fish_0=0$. For the computation we use Algorithm~\ref{alg:conditionalgradient} (together
with Algorithm~\ref{alg:postprocessing} and a full resolution of the arising finite-dimensional
subproblems), until the residual is zero (up to machine precision).
We obtain a discrete optimal design $\optombh$ in $M^+(\Om)\cap M_h$
with five support points.
By closer inspection we observe that two of the computed support points are
located in adjacent nodes of the triangulation. For a better visualization of the computed
result, the corresponding Dirac delta functions are replaced by a single one placed at the
center of mass. The coefficient of this new Dirac delta function is given by the combined
mass of the original ones; see Figure~\ref{fig:optdesign}.}
\begin{figure}[htb]
\centering
\begin{subfigure}[t]{.45\linewidth}
\centering
\scalebox{.8}
{
%
%
\begin{tikzpicture}

\begin{axis}[%
width=2.2in,
height=1.8in,
scale only axis,
xmin=0,
xmax=1,
xlabel style={font=\color{white!15!black}},
xlabel={$x_\text{1}$},
ymin=0,
ymax=1,
ylabel style={font=\color{white!15!black},at={(axis description cs:0.07,0.5)}},
ylabel={$x_\text{2}$},
axis background/.style={fill=white},
]
\addplot [color=blue, draw=none, mark=x, mark options={solid, blue}, forget plot]
  table[row sep=crcr]{%
0.320917408926917	0.686756692853834\\
};
\node[above, align=center]
at (axis cs:0.321,0.687) {219.068};
\addplot [color=blue, draw=none, mark=x, mark options={solid, blue}, forget plot]
  table[row sep=crcr]{%
0.84765625	0.890625\\
};
\node[above, align=center]
at (axis cs:0.848,0.891) {115.441};
\addplot [color=blue, draw=none, mark=x, mark options={solid, blue}, forget plot]
  table[row sep=crcr]{%
0.841796875	0.501953125\\
};
\node[above, align=center]
at (axis cs:0.842,0.502) {56.758};
\addplot [color=blue, draw=none, mark=x, mark options={solid, blue}, forget plot]
  table[row sep=crcr]{%
0.646484375	0.298828125\\
};
\node[above, align=center]
at (axis cs:0.646,0.299) {198.667};
\end{axis}
\end{tikzpicture}
\caption{Optimal design $\optombh$.}\label{fig:optdesign}
\end{subfigure}
\begin{subfigure}[t]{.45\linewidth}
\centering
\scalebox{.8}
{\input{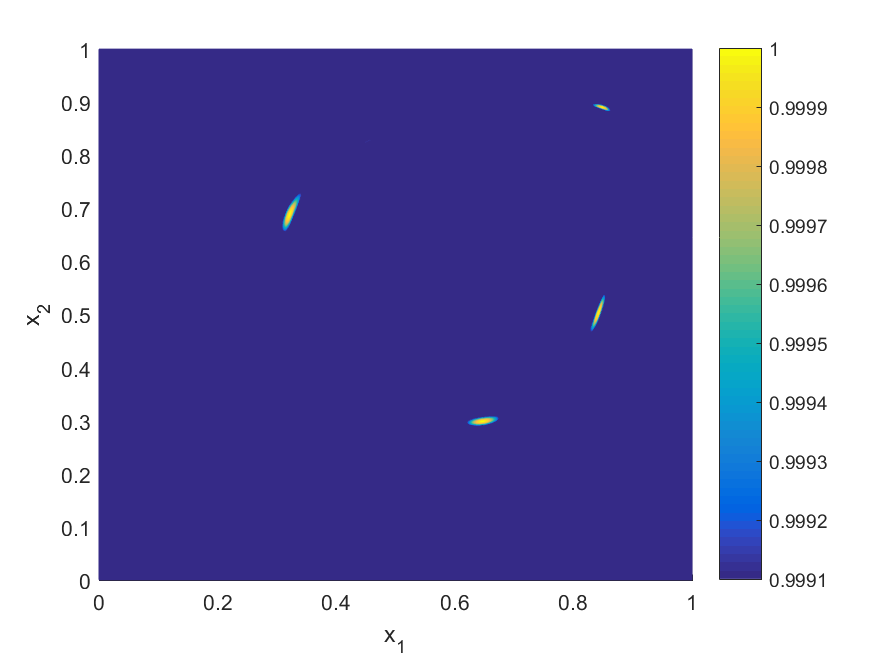}}\\
\caption{Isolines of $-\psi_h(\optombh)$.}\label{fig:neggrad}
\end{subfigure}
\caption{Optimal design and isolines of the gradient.}\label{fig:effectiveness1}
\end{figure}
\changed{
Alongside we plot the isolines of the nodal interpolant of $-\psi_h '(\optombh)$. Note that the values of $-\psi_h '(\optombh)$ in~$N_h$ are bounded
from above by the cost parameter $\beta = 1$ and the support points of $\optombh$ align
themselves with those points in which this upper bound is achieved; see Figure~\ref{fig:neggrad}.
}
\subsubsection{Confidence domains of the optimal estimator} \label{subsec:confidence}
Given the optimal design $\bar{\omega}_h$ from Figure~\ref{fig:optdesign}, and $K>0$ we
note that the measure $\bar{\om}^K_h=(K/\mnorm{\optombh}) \optombh$ is an optimal solution
to
\begin{align*}
 \min_{\om \in M^+(\Om)} \Tr(\fish_h(\om_h)^{-1}) \quad\text{subject to~}\mnorm{\om_h} \leq K,
\end{align*}
since the A-optimal design criterion is positive homogeneous;
see Proposition~\ref{prop:equivalence}. In this section we compute the linearised confidence
domains~\eqref{linearizedconfidence} of the least-squares estimator $\tilde{q}$ from
\eqref{Leastsquareestimator} corresponding to $\bar{\om}^K_h$ for $K = 3 \cdot 10^{4}$.
\begin{figure}[htb]
\centering
\begin{subfigure}[t]{.45\linewidth}
\centering
\scalebox{.8}
{
%
%
\begin{tikzpicture}

\begin{axis}[%
width=2.2in,
height=1.8in,
scale only axis,
xmin=0,
xmax=1,
xlabel style={font=\color{white!15!black}},
xlabel={$x_\text{1}$},
ymin=0,
ymax=1,
ylabel style={font=\color{white!15!black},at={(axis description cs:0.07,0.5)}},
ylabel={$x_\text{2}$},
axis background/.style={fill=white},
]
\addplot [color=blue, draw=none, mark=x, mark options={solid, blue}, forget plot]
  table[row sep=crcr]{%
0.25	0.25\\
};
\node[above, align=center]
at (axis cs:0.25,0.25) {10000};
\addplot [color=blue, draw=none, mark=x, mark options={solid, blue}, forget plot]
  table[row sep=crcr]{%
0.25	0.75\\
};
\node[above, align=center]
at (axis cs:0.25,0.75) {10000};
\addplot [color=blue, draw=none, mark=x, mark options={solid, blue}, forget plot]
  table[row sep=crcr]{%
0.75	0.5\\
};
\node[above, align=center]
at (axis cs:0.75,0.5) {10000};
\end{axis}
\end{tikzpicture}
\end{subfigure}
\begin{subfigure}[t]{.45\linewidth}
\centering
\scalebox{.8}
{
%
%
\begin{tikzpicture}

\begin{axis}[%
width=2.2in,
height=1.8in,
scale only axis,
xmin=0,
xmax=1,
xlabel style={font=\color{white!15!black}},
xlabel={$x_\text{1}$},
ymin=0,
ymax=1,
ylabel style={font=\color{white!15!black},at={(axis description cs:0.07,0.5)}},
ylabel={$x_\text{2}$},
axis background/.style={fill=white},
]
\addplot [color=blue, draw=none, mark=x, mark options={solid, blue}, forget plot]
  table[row sep=crcr]{%
0.641940255749232	0.298190255749232\\
};
\node[above, align=center]
at (axis cs:0.642,0.298) {16089.5};
\addplot [color=blue, draw=none, mark=x, mark options={solid, blue}, forget plot]
  table[row sep=crcr]{%
0.84375	0.892578125\\
};
\node[above, align=center]
at (axis cs:0.844,0.893) {5245.1};
\addplot [color=blue, draw=none, mark=x, mark options={solid, blue}, forget plot]
  table[row sep=crcr]{%
0.322265625	0.689453125\\
};
\node[above, align=center]
at (axis cs:0.322,0.689) {1337.4};
\addplot [color=blue, draw=none, mark=x, mark options={solid, blue}, forget plot]
  table[row sep=crcr]{%
0.4609375	0.830078125\\
};
\node[above, align=center]
at (axis cs:0.461,0.83) {7327.937};
\end{axis}
\end{tikzpicture}
\end{subfigure}
\caption{Reference measures $\om_1$ (left) and $\bar{\om}_h^{K,W}$ (right).}
\label{fig:randdesign}
\end{figure}
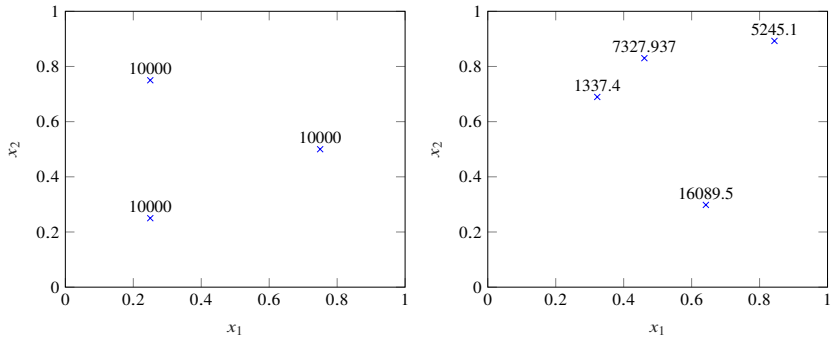

Note that, given a sparse design measure $\om$, and the associated linearised estimator
$\tilde{q}_{\mathrm{lin}}=(\tilde{q}^1_{\mathrm{lin}},\tilde{q}^2_{\mathrm{lin}},\tilde{q}^3_{\mathrm{lin}})^\top$, see
\eqref{explicitlinearized}, there holds
$\mathrm{Cov}[\tilde{q}_{\mathrm{lin}},\tilde{q}_{\mathrm{lin}}]=\fish_h(\om)^{-1}$; see the discussion in
Section~\ref{sec:fromparamtoopt}. Consequently we have
\begin{align*}
\fish_h(\om)^{-1}_{kk}=\mathrm{Var}[\tilde{q}^k_{\mathrm{lin}}],~k\in \set{1,2,3}
\quad\text{and}\quad \Tr(\fish_h(\om)^{-1})=\sum_{k=1}^3 \mathrm{Var}[\tilde{q}^k_{\mathrm{lin}}].
\end{align*}
As a comparison, we also consider the estimators corresponding to two
reference designs of the same norm. The  first measure $\om^1$ is chosen as a linear
combination of three Dirac delta functions with equal coefficients while the second measure
$\bar{\om}^{K,W}_{h}$ is a solution to
\begin{align} \label{ex:trace}
 \min_{\om \in M^+(\Om)} \Tr(W\fish_h(\om_h)^{-1}W)
  \quad\text{subject to~} \mnorm{\om_h} \leq K,
\end{align}
where $W=\operatorname{diag}(1,1,4)$, i.e. we place more weight on  the variance for the estimation of~$q_3$.  
The designs $\om_1$ and $\bar{\om}^{K,W}_{h}$ are depicted in Figure~\ref{fig:randdesign}.

For a better visualization we plot the $50\%$-linearised confidence domains of the
obtained estimators for the two dimensional parameter vectors $(q_1,q_2)^\top$, $(q_2,q_3)^\top$,
and $(q_3,q_1)^\top$ in Figure~\ref{fig:confidencedomains}. Additionally, for each design
we report $\Tr(\fish_h(\om)^{-1})$ as well as the diagonal entries of
$\fish_h(\om)^{-1}$ in Table~\ref{tab:diagonalentries}.
\begin{table}[tbhp]
\caption{Trace and diagonal entries of $\fish_h (\om)^{-1}$}
\label{tab:diagonalentries}
\centering
\begin{tabular}{ccccc} \toprule
$\om$ & $\fish_h (\om)_{11}^{-1}$& $\fish_h (\om)_{22}^{-1}$ & $\fish_h (\om)_{33}^{-1}$& $\Tr(\fish_h (\om)^{-1})$\\ \midrule
$\bar{\om}^K_h$& 0.019 & 5.627 & 5.955 & 11.601  \\
$\om_1$ & 0.091 & 7.388 & 20.678 & 28.157 \\
$\bar{\om}^{K,W}_{h}$ & 0.023 & 14.12 & 3.831& 17.974\\
\bottomrule
\end{tabular}
\end{table}
As expected, since $\optombh$ is chosen by the A-optimal design criterion, we observe that
\begin{align} \label{ex:weighttrace}
 \Tr(\fish_h(\bar{\om}^K_h)^{-1})
  \leq \Tr(\fish_h(\bar{\om}^{K,W}_{h})^{-1})
  \leq \Tr(\fish_h(\om_1)^{-1}).
\end{align}
Moreover we note that $\fish_h(\bar{\om}^K_h)^{-1}_{kk}<\fish_h(\om_1)^{-1}_{kk}$ for all
$k$, i.e. the optimal estimator estimates all unknown parameters with a smaller variance
than the estimator associated to the reference design $\om_1$. As a consequence, the
linearised confidence domains of the optimal estimator are contained in those of the one
corresponding to $\om_1$; see Figure \ref{fig:confidencedomains}.
In contrast, considering $\om_2$, we have
$\fish_h(\bar{\om}^{K,W}_{h})^{-1}_{33}<\fish_h(\bar{\om}^K_h)^{-1}_{33}$ and
$\fish_h(\bar{\om}^K_h)^{-1}_{kk}<\fish_h(\fish_h(\bar{\om}^{K,W}_{h})^{-1}_{kk}$ for $k=1,2$, i.e.\ the third
parameter is estimated more accurately by choosing the measurement locations and weights
according to $\om_2$ while the variance for the estimation of the other parameters is
larger. This is a consequence of the different weighting of the matrix entries in
\eqref{ex:weighttrace}.
On the one hand, the obtained results show the efficiency of an optimally chosen
measurement design at least for the linearised model. On the other hand, they also
highlight that the properties of the obtained optimal estimators crucially depend on the
choice of the optimal design criterion~$\Psi$.

\begin{figure}
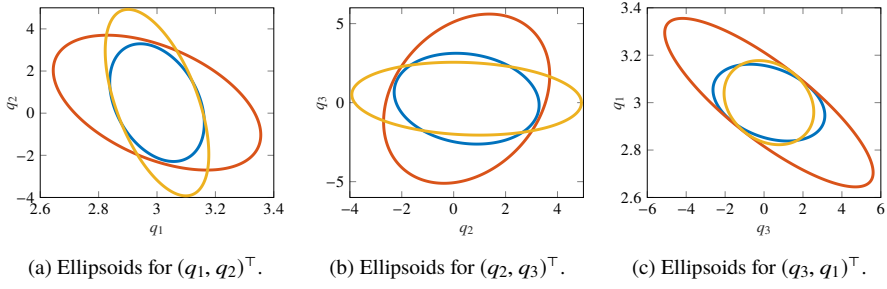

\centering
\begin{subfigure}[t]{.32\linewidth}
\centering
\scalebox{.55}
{\large\input{figures/12projection}}\\
\caption{Ellipsoids for $(q_1,q_2)^\top$.}
\end{subfigure}
\begin{subfigure}[t]{.32\linewidth}
\centering
\scalebox{.55}
{\large\input{figures/23projection}}\\
\caption{Ellipsoids for $(q_2,q_3)^\top$.}
\end{subfigure}
\begin{subfigure}[t]{0.32\linewidth}
\centering
\scalebox{.55}
{\large\input{figures/13projection}}\\
\caption{Ellipsoids for $(q_3,q_1)^\top$.}
\end{subfigure}
\caption{Confidence ellipsoids for the estimators associated to $\bar{\om}^K_h$ (blue), $\om_1$ (red) and $\bar{\om}^{K,W}_{h}$ (yellow).}\label{fig:confidencedomains}
\end{figure}

\subsubsection{Comparison of point insertion algorithms}
\label{subsubsec:pointcomparison}
In this section we investigate the performance
of the successive point insertion algorithm presented in Section~\ref{subsec:condgrad}. We
consider the same setup as in Section \ref{subsec:firstorder}, i.e.\ we solve the A-optimal
design problem for Example~1 on grid level nine with $\beta=1$ and $\fish_0=0$.
The step size parameters $\alpha$ and $\gamma$ in~\eqref{Def:Armijo} are both chosen as
$1/2$ throughout the experiments and the
iteration is terminated if either $\Phi(\om^k) \leq 10^{-9}$ or if the iteration number
$k$ exceeds $2 \cdot 10^4$. The aim of this section is to confirm the theoretical
convergence results for Algorithm~\ref{alg:conditionalgradient} and to demonstrate the
necessity of additional point removal steps. Additionally we want to highlight the
differences between the three presented choices of the new coefficient vector
$\lambda^{k+1}$ concerning the sparsity of the iterates and the practically achieved
acceleration of the convergence. Specifically, we consider the following
implementations of step~4.\ in Algorithm~\ref{alg:conditionalgradient}:
\begin{description}
\item[\namedlabel{alg:GCG}{GCG}]
  In the straightforward implementation of the GCG algorithm we set $\lambda^{k+1} =
  \lambda^{k+1/2}$, i.e.\ only steps~1.\ to 4. are performed.
\item[\namedlabel{alg:SPINAT}{SPINAT}]
  Here, we employ the procedure suggested in~\cite{bredies2013inverse}, termed
  ``Sequential Point Insertion and Thresholding''.
  In step~5., $\lambda^{k+1}$ is determined from a proximal gradient
  iteration~\eqref{projectedgradient}. The step size is chosen as
  $\sigma_k = (1/2)^{n}\sigma_{0,k}$, where $\sigma_{0,k}>0$ 
  for the smallest $n \in \N$ giving
  $F(\om(\lambda^{k+1}(\sigma_k)))\leq F(\om(\lambda^{k+1/2}))$.
  In particular, given $\om^{k+1/2} = \sum_{i} \lambda^{k+1/2}_i\delta_{x_i}$, we choose $\sigma_{0,k}$ as
  \begin{align*}
    \sigma_{0,k} =
    \max \left\{100, - 2 \min_{i} \left\{ \frac{\lambda_i}{-\psi'(\om^{k+1/2})(x_i)-\beta}\right\}\right \}.
  \end{align*}
  Note that by this choice of $\sigma_{0,k}$, the coefficients of all points
  $x \in \supp \om^{k+1/2}$ with $-\psi'(\om^{k+1/2})(x)< \beta$ are set to zero in the first trial
  step (i.e.\ for $n = 0$).
\item[\namedlabel{alg:PDAP}{PDAP}]
  Here, the coefficient vector $\lambda^{k+1}$ is chosen as in \eqref{PDAP} by solving the
  finite dimensional sub-problem \eqref{def:subproblem} up to machine precision in each
  iteration. For the solution we use a semi-smooth Newton method with a globalization
  strategy based on a backtracking line-search. The convergence criterion for
  the solution of the sub-problems is based on the norm of the Newton-residual.
  Since, this method can be interpreted as a method operating on a set of active points
  \(\mathcal{A}_k = \supp \omega^k\) (see section~\ref{sec:acceleration}), we reference it by
  the name: ``Primal-Dual Active Point''.
\end{description}
All three versions of the algorithm are also considered with an application of the
sparsification step Algorithm~\ref{alg:postprocessing} applied at the end of each
iteration of Algorithm~\ref{alg:conditionalgradient}. In the
following this will be denoted by an additional ``+PP''.

\begin{figure}[htb]
\centering
\begin{subfigure}[t]{.45\linewidth}
\centering
\scalebox{.8}
{\input{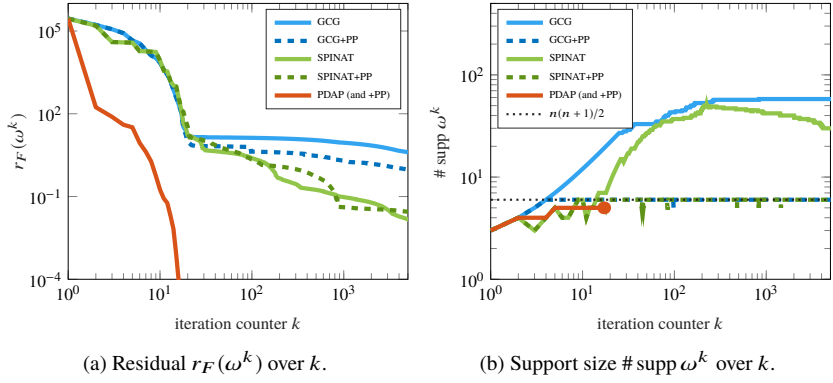}}\\
\caption{Residual $r_F(\om^k)$ over $k$.}\label{fig:residual}
\end{subfigure}
\begin{subfigure}[t]{.45\linewidth}
\centering
\scalebox{.8}
{
%
%
\definecolor{mycolor1}{rgb}{0.20000,0.64700,0.94100}%
\definecolor{mycolor2}{rgb}{0.00000,0.44700,0.74100}%
\definecolor{mycolor3}{rgb}{0.56600,0.77400,0.28800}%
\definecolor{mycolor4}{rgb}{0.36600,0.57400,0.08800}%
\definecolor{mycolor5}{rgb}{1.00000,0.52500,0.28800}%
\definecolor{mycolor6}{rgb}{0.85000,0.32500,0.09800}%
\begin{tikzpicture}

\begin{axis}[%
width=2.2in,
height=1.8in,
scale only axis,
xmode=log,
xmin=1,
xmax=5000,
xminorticks=true,
xlabel style={font=\color{white!15!black}},
xlabel={iteration counter $k$},
ymode=log,
ymin=1,
ymax=500,
yminorticks=true,
ylabel style={font=\color{white!15!black},at={(axis description cs:0.07,0.5)}},
ylabel={$\text{\# supp }\omega{}^{k}$},
axis background/.style={fill=white},
legend style={at={(0.02,0.98)}, anchor=north west, legend cell align=left, align=left,
  draw=white!15!black, font=\tiny}
]

\addplot [color=mycolor1, line width=2pt]
  table[row sep=crcr]{%
1	3\\
2	4\\
3	5\\
4	6\\
5	7\\
6	8\\
7	9.00000000000001\\
8	10\\
10	12\\
12	14\\
15	17\\
19	21\\
24	26\\
25	27\\
27	27\\
28	28\\
30	28\\
31	29\\
33	29\\
37	33\\
61	33\\
62.0000000000001	34\\
63.0000000000001	34\\
64.0000000000001	35\\
72.0000000000001	35\\
73	36\\
74	36\\
76	38\\
77.0000000000001	38\\
78.0000000000001	39\\
80	39\\
81	40\\
85.9999999999999	40\\
88.0000000000001	42\\
89.0000000000001	42\\
90.0000000000001	43.0000000000001\\
102	43.0000000000001\\
103	44\\
120	44\\
121	45\\
132	45\\
134	47.0000000000001\\
135	47.0000000000001\\
136	48\\
148	48\\
149	49\\
150	49\\
151	50\\
161	50\\
162	51\\
165	51\\
166	52\\
167	52\\
168	53\\
236	53\\
237	54.0000000000001\\
257	54.0000000000001\\
258	55\\
261	55\\
262	56\\
270	56\\
271	57\\
830	57\\
832.000000000001	58.0000000000001\\
20001	58.0000000000001\\
};
\addlegendentry{GCG}

\addplot [color=mycolor2, line width=2pt, dashed]
  table[row sep=crcr]{%
1	3\\
2	4\\
3	5\\
4	6\\
97.0000000000001	6\\
98.0000000000001	5\\
98.9999999999999	5\\
100	6\\
20001	6\\
};
\addlegendentry{GCG+PP}

\addplot [color=mycolor3, line width=2pt]
  table[row sep=crcr]{%
1	3\\
2	4\\
3	3\\
5	5\\
6	4\\
7	4\\
8	5\\
9.00000000000001	6\\
10	6\\
11	4\\
12	5\\
13	5\\
14	6\\
15	7\\
18	7\\
19	8\\
20	9.00000000000001\\
21	10\\
22	11\\
24	13\\
26	15\\
27	15\\
28	14\\
30	16\\
31	17\\
32	17\\
34	19\\
37	19\\
40	22\\
41	22\\
42	23\\
43.0000000000001	23\\
45	25\\
47.0000000000001	25\\
48	26\\
49	26\\
50	27\\
52	27\\
53	28\\
54.0000000000001	28\\
55	29\\
58.0000000000001	29\\
60	31\\
62.0000000000001	31\\
63.0000000000001	32\\
70	32\\
72.0000000000001	34\\
73	34\\
74	35\\
92.0000000000001	35\\
93.0000000000001	36\\
95	36\\
96.0000000000001	37\\
125	37\\
126	38\\
150	38\\
151	39\\
154	39\\
155	40\\
168	40\\
169	41\\
170	41\\
171	42\\
180	42\\
181	43.0000000000001\\
190	43.0000000000001\\
191	42\\
193	42\\
196	45\\
198	45\\
199	46\\
200	46\\
201	47.0000000000001\\
204	47.0000000000001\\
205	48\\
208	48\\
209	49\\
211	49\\
212	50\\
223	50\\
224	51\\
225	50\\
226	50\\
227	49\\
236	49\\
237	48\\
248	48\\
249	49\\
287	49\\
288	48\\
342	48\\
343	47.0000000000001\\
466	47.0000000000001\\
467.000000000001	46\\
603	46\\
605.000000000001	45\\
690.000000000001	45\\
692.000000000001	44\\
708	44\\
710	45\\
810	45\\
812	44\\
902.000000000001	44\\
904.000000000001	43.0000000000001\\
933.000000000001	43.0000000000001\\
935.000000000001	42\\
1406	42\\
1409	41\\
1422	41\\
1425	40\\
2001	40\\
2005	39\\
2015	39\\
2019	38\\
2310	38\\
2315	37\\
2880	37\\
2886	36\\
3269.00000000001	36\\
3275	35\\
3349	35\\
3355	34\\
3825	34\\
3832	33\\
3944	33\\
3951	32\\
4054.00000000001	32\\
4062	31\\
4163	31\\
4171	30\\
5360.00000000001	30\\
5370	29\\
5571	29\\
5581.00000000001	28\\
5783	28\\
5794	27\\
6320.00000000001	27\\
6332	26\\
7280.00000000001	26\\
7293	25\\
7765	25\\
7779	24\\
10442	24\\
10461	23\\
11281	23\\
11301	22\\
11581	22\\
11602	21\\
12094	21\\
12116	20\\
13666	20\\
13690	19\\
13777	19\\
13802	18\\
14701	18\\
14727	17\\
16716	17\\
16746	16\\
20001	16\\
};
\addlegendentry{SPINAT}

\addplot [color=mycolor4, line width=2pt, dashed]
  table[row sep=crcr]{%
1	3\\
2	4\\
3	3\\
5	5\\
6	4\\
7	4\\
8	5\\
9.00000000000001	6\\
10	6\\
11	4\\
12	5\\
13	5\\
14	6\\
17	6\\
18	5\\
19	6\\
43.0000000000001	6\\
44	5\\
45	6\\
46	5\\
47.0000000000001	6\\
83	6\\
84.0000000000001	5\\
85	5\\
85.9999999999999	6\\
480	6\\
481	5\\
482	6\\
828.000000000001	6\\
829	5\\
831	6\\
833.000000000001	5\\
835	6\\
1445	6\\
1446	5\\
1449	6\\
10109	6\\
10110	5\\
10128	6\\
20001	6\\
};
\addlegendentry{SPINAT+PP}

\addplot [color=mycolor6, line width=2pt]
  table[row sep=crcr]{%
1	3\\
2	4\\
4	4\\
5	5\\
17	5\\
};
\addlegendentry{PDAP (and +PP)}


\addplot [color=white!15!black, line width=1pt, dotted]
  table[row sep=crcr]{%
1	6\\
20001   6\\
};
\addlegendentry{$n(n+1)/2$}

\addplot [color=mycolor6, line width=6pt, dotted, line cap=round]
  table[row sep=crcr]{%
17      5\\
17      5\\
};

\end{axis}
\end{tikzpicture}
\caption{Support size $\#\supp \om^k$ over $k$.} \label{fig:support}
\end{subfigure}
\caption{Residual and support size plotted over iteration number $k$. \changed{The
  results for PDAP and PDAP+PP are identical; a line with a dot denotes termination of the
  algorithm within machine tolerance.}}
\end{figure}
In Figure~\ref{fig:residual} we plot the residual $r_F(\om^k)$ for all considered algorithms over the
iteration counter $k$. For \ref{alg:GCG} as well as \ref{alg:SPINAT} we observe a rapid
decay of the computed residuals in the first few iterations. However, asymptotically both
admit a sub-linear convergence rate, suggesting that the convergence result
derived in Theorem~\ref{thm:weakconvergencerate} is sharp in this instance. The additional
application of Algorithm~\ref{alg:postprocessing} has no significant impact on the
convergence behavior. We additionally note that both \ref{alg:GCG} and \ref{alg:SPINAT}
terminate only since the maximum number of
iterations is exceeded while the computed residuals $r_F(\om^k)$ and thus also the primal-dual gap
$\Phi(\om^k)$ remain above $10^{-3}$. In contrast, \ref{alg:PDAP} terminates after few
iterations within the tolerance. The results clearly indicate a
better convergence rate than the one derived in Theorem~\ref{thm:weakconvergencerate}.

Next, we study the influence of the different point removal steps on the sparsity pattern
of the obtained iterates in Figure~\ref{fig:support}. For \ref{alg:GCG} we notice that the number of
support points increases monotonically up to approximately $60$. This suggests a strong
clusterization of the intermediate support points around those of $\optombh$ which is
possibly caused by the small curvature of $-\psi_h '(\optombh)$ (see
Figure~\ref{fig:neggrad}) in the vicinity of its global maxima.
\changed{A similar behavior can be
observed for the iterates obtained through \ref{alg:SPINAT}. However, compared to
\ref{alg:GCG} the support size for \ref{alg:SPINAT} grows slower due
to the additional projected gradient step in every iteration.}
Concerning the application of Algorithm~\ref{alg:postprocessing}, we observe that the
support remains bounded \changed{for all implementations with ``+PP'' by $n(n+1)/2
= 6$ as predicted by
Proposition~\ref{prop:sparsifying}}. We note that this upper bound is achieved in almost
all but the first few iterations for \ref{alg:GCG} and \ref{alg:SPINAT}.
In contrast, \ref{alg:PDAP} yields iterates comprising less than six support points independently
of the additional post-processing. A closer inspection reveals that the loop in
Algorithm~\ref{alg:postprocessing} is not carried out in any iteration, i.e.\ the sparsity
of the iterates is fully provided by the exact solution of the finite-dimensional sub-problems.

\begin{figure}[htb]
\centering
\begin{subfigure}[t]{.45\linewidth}
\centering
\scalebox{.8}
{\input{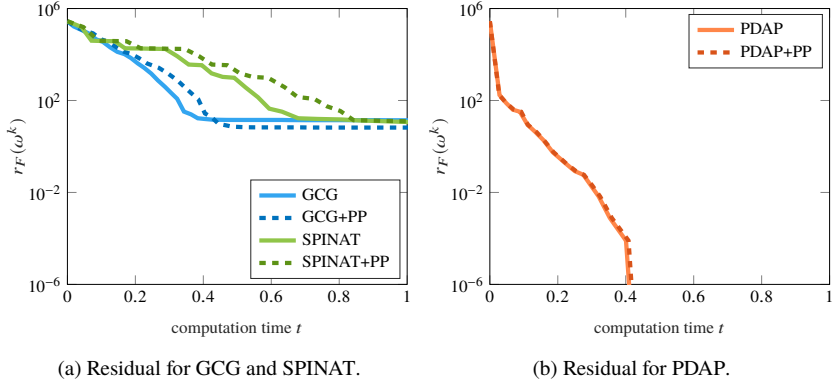}}\\
\caption{Residual for \ref{alg:GCG} and \ref{alg:SPINAT}.}\label{fig:timeresidualGCG}
\end{subfigure}
\begin{subfigure}[t]{.45\linewidth}
\centering
\scalebox{.8}
{
%
%
\definecolor{mycolor1}{rgb}{0.00000,0.44700,0.74100}%
\definecolor{mycolor2}{rgb}{0.85000,0.32500,0.09800}%
\begin{tikzpicture}

\begin{axis}[%
width=4.521in,
height=3.566in,
at={(0.758in,0.481in)},
scale only axis,
xmin=0,
xmax=1,
xlabel style={font=\color{white!15!black}},
xlabel={t},
ymode=log,
ymin=1e-06,
ymax=1000000,
yminorticks=true,
ylabel style={font=\color{white!15!black}},
ylabel={$\text{r}_\text{F}\text{(}\omega{}_\text{k}\text{)}$},
axis background/.style={fill=white},
legend style={legend cell align=left, align=left, draw=white!15!black}
]
\addplot [color=mycolor1]
  table[row sep=crcr]{%
0	280396.460334535\\
0.028488956359982	172.402160364329\\
0.0481532639141312	77.4077916686927\\
0.0677213538882593	39.9485675580459\\
0.0885163249160769	32.2455507986074\\
0.111061172999209	8.76060171108588\\
0.134416814556651	4.19388073633172\\
0.157672389830871	1.88095329659131\\
0.18517522290406	0.563801182415091\\
0.224296008039513	0.176466474949621\\
0.25026662960508	0.0843199148378062\\
0.275336226987962	0.0583570323374261\\
0.297596271034232	0.0198952803254997\\
0.32052085783318	0.00661927371697857\\
0.353179670294426	0.000779666356493181\\
0.400791122324617	7.98427711288241e-05\\
0.438617460283521	0\\
};
\addlegendentry{PDAP}

\addplot [color=mycolor2]
  table[row sep=crcr]{%
0	280396.460334535\\
0.0277072419765229	172.402160364329\\
0.0482259616412581	77.4077916686927\\
0.0705800850991041	39.9485675580459\\
0.092319271312194	32.2455507986074\\
0.112099467596057	8.76060171108588\\
0.137618935619748	4.19388073633172\\
0.159079732301311	1.88095329659131\\
0.187560991254891	0.563801182415091\\
0.225725159828091	0.176466474949621\\
0.253657337125019	0.0843199148378062\\
0.277064722358827	0.0583570323374261\\
0.301586948619812	0.0198952803254997\\
0.324651371635057	0.00661927371697857\\
0.358955718531506	0.000779666356493181\\
0.408205007590498	7.98427711288241e-05\\
0.444495712972268	0\\
};
\addlegendentry{PDAP+PP}

\end{axis}
\end{tikzpicture}
\caption{Residual for \ref{alg:PDAP}.} \label{fig:timeresidualPDAP}
\end{subfigure}
\caption{Residual $r_F(\om^k)$ plotted over the first second of the running time.}
\end{figure}
Last, we report on the computational time for the
setup considered before, in order to account for the numerical effort of the additional point removal steps.
The evolution of the residuals in the first second of the running time
for \ref{alg:GCG} and \ref{alg:SPINAT} can be found in Figure~\ref{fig:timeresidualGCG}.
We observe that neither
the additional projected gradient steps nor the additional application of Algorithm~\ref{alg:postprocessing} lead to a significant increase of the computational time. For \ref{alg:PDAP}, the measurement
times and residuals for all iterations are shown in Figure~\ref{fig:timeresidualPDAP}. We point
out that \ref{alg:PDAP} converges after $12$ iterations computed in approximately $0.4$ seconds
in this example. This is comparable to the elapsed computation time for computing
$25$ iterations of the \ref{alg:GCG} method. The small average time for a single iteration of \ref{alg:PDAP} is on the one hand a consequence of the uniformly bounded, low dimension of
the sub-problem \eqref{PDAP}. On the other hand, using the intermediate iterate $\om^{k+1/2}$ to
warm-start the semi-smooth Newton method greatly benefits its convergence behavior,
restricting the additional numerical effort in of \ref{alg:PDAP} in comparison to \ref{alg:GCG} to the
solution of a few low-dimensional Newton systems in each iteration. These results again
underline the practical efficiency of the presented acceleration strategies.

\subsubsection{Mesh-independence} \label{subsubsec:meshindependence}
To finish our numerical studies on Example~1 we examine
the influence of the mesh-size $h$ on the performance of
Algorithm~\ref{alg:conditionalgradient}. We again consider
the A-optimal design problem for $\beta=1$ and $\fish_0=0$ on consecutively refined meshes $\mathcal{T}_{h_l}$ ,
$l=5,\dots,9$. On each refinement level \(l\) the optimal design problem is solved using \ref{alg:GCG}
and \ref{alg:PDAP}, respectively. The computed residuals are shown in Figure~\ref{fig:independenceAlg}. For both
versions we observe that the convergence rate of the objective function value is stable
with respect to {mesh-refinement}. We point out
that this indicates a better convergence behavior of \ref{alg:PDAP} also on the continuous
level. A theoretical investigation of this improved rate is beyond the scope of this work
but will be given in a future manuscript.
\begin{figure}[htb]
\centering
\begin{subfigure}[t]{.45\linewidth}
\centering
\scalebox{.8}
{\input{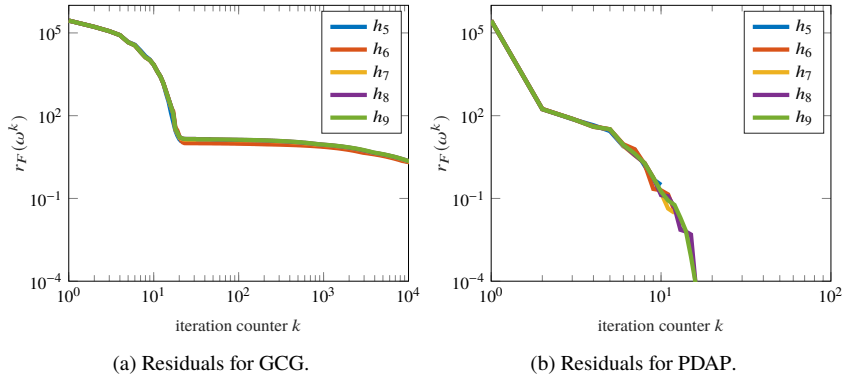}}\\
\caption{Residuals for \ref{alg:GCG}.}\label{fig:independenceGCG}
\end{subfigure}
\begin{subfigure}[t]{.45\linewidth}
\centering
\scalebox{.8}
{
%
%
\definecolor{mycolor1}{rgb}{0.00000,0.44700,0.74100}%
\definecolor{mycolor2}{rgb}{0.85000,0.32500,0.09800}%
\definecolor{mycolor3}{rgb}{0.92900,0.69400,0.12500}%
\definecolor{mycolor4}{rgb}{0.49400,0.18400,0.55600}%
\definecolor{mycolor5}{rgb}{0.46600,0.67400,0.18800}%
\begin{tikzpicture}

\begin{axis}[%
width=2.2in,
height=1.8in,
scale only axis,
xmode=log,
xmin=1,
xmax=100,
xminorticks=true,
xlabel style={font=\color{white!15!black}},
xlabel={iteration counter $k$},
ymode=log,
ymin=1e-04,
ymax=1000000,
yminorticks=true,
ylabel style={font=\color{white!15!black},at={(axis description cs:0.07,0.5)}},
ylabel={${r}_{F}{(}\omega^{k}{)}$},
axis background/.style={fill=white},
legend style={legend cell align=left, align=left, draw=white!15!black}
]
\addplot [color=mycolor1, line width=2pt]
  table[row sep=crcr]{%
1	279400.769073642\\
2	174.987298973513\\
3	74.9433053392409\\
4	45.0668130236877\\
5	27.7543999899349\\
6	7.77169524714964\\
7	5.80043882471318\\
8	1.54501568497221\\
9.00000000000001	0.495836814160157\\
10	0.310438267687914\\
};
\addlegendentry{${h}_{5}$}

\addplot [color=mycolor2, line width=2pt]
  table[row sep=crcr]{%
1	280148.3952623\\
2	176.817072817774\\
3	77.785015171622\\
4	41.4113211751264\\
5	32.1440574235667\\
6	9.22761908344001\\
7	6.08241292512594\\
8	1.78302326151243\\
9.00000000000001	0.221272545287775\\
10	0.196611820710587\\
11	0.140133131538732\\
12	0.0353571199780163\\
};
\addlegendentry{${h}_{6}$}

\addplot [color=mycolor3, line width=2pt]
  table[row sep=crcr]{%
1	280337.255976643\\
2	171.061305723443\\
3	76.2477079334453\\
4	39.0685983893177\\
5	32.0736057356462\\
6	8.0021963262393\\
7	3.64718603813071\\
8	1.95783708510498\\
9.00000000000001	0.564893475284862\\
10	0.161508521925043\\
11	0.0431134331652174\\
12	0.0300318815739047\\
};
\addlegendentry{${h}_{7}$}

\addplot [color=mycolor4, line width=2pt]
  table[row sep=crcr]{%
1	280384.608757359\\
2	170.924608016832\\
3	77.258603746674\\
4	39.7439568870178\\
5	31.6513303019369\\
6	8.51907166558204\\
7	3.74031609756503\\
8	1.88623197313859\\
9.00000000000001	0.559291200554298\\
10	0.132268659694091\\
11	0.119985496459322\\
12	0.0473737268566765\\
13	0.00728807906170914\\
14	0.00631128567147244\\
15	0.0049164653439675\\
16	6.89650421463738e-05\\
};
\addlegendentry{${h}_{8}$}

\addplot [color=mycolor5, line width=2pt]
  table[row sep=crcr]{%
1	280396.460334535\\
2	172.402160364329\\
3	77.4077916686927\\
4	39.9485675580459\\
5	32.2455507986074\\
6	8.76060171108588\\
7	4.19388073633172\\
8	1.88095329659132\\
9.00000000000001	0.563801182415091\\
10	0.176466474949621\\
11	0.0843199148378062\\
12	0.058357032337426\\
13	0.0198952803254997\\
14	0.00661927371697857\\
15	0.00077966635649318\\
16	7.98427711288242e-05\\
};
\addlegendentry{${h}_{9}$}

\end{axis}
\end{tikzpicture}
\caption{Residuals for \ref{alg:PDAP}.}
\label{fig:independencePDAP}
\end{subfigure}
\caption{Evolution of residuals $r_F(\om^k)$ over iterations \(k\) on different refinement levels.}
\label{fig:independenceAlg}
\end{figure}
Additionally, in Figure~\ref{fig:independencesupport}, we plot the support size over the
iteration counter for each refinement level. For \ref{alg:GCG} we observe a
monotonic growth of the support size up to a certain threshold.
Note that the upper bound on the support size seems
to depend on the spatial discretization: the finer the grid, the more clusterization
around the true support points can be observed. In contrast, for \ref{alg:PDAP}, the
evolution of the support size admits a mesh-independent behavior in this example.
\begin{figure}[htb]
\centering
\begin{subfigure}[t]{.45\linewidth}
\centering
\scalebox{.8}
{
%
%
\definecolor{mycolor1}{rgb}{0.00000,0.44700,0.74100}%
\definecolor{mycolor2}{rgb}{0.85000,0.32500,0.09800}%
\definecolor{mycolor3}{rgb}{0.92900,0.69400,0.12500}%
\definecolor{mycolor4}{rgb}{0.49400,0.18400,0.55600}%
\definecolor{mycolor5}{rgb}{0.46600,0.67400,0.18800}%
\begin{tikzpicture}

\begin{axis}[%
width=2.2in,
height=1.8in,
scale only axis,
xmode=log,
xmin=1,
xmax=10000,
xminorticks=true,
xlabel style={font=\color{white!15!black}},
xlabel={iteration counter $k$},
ymode=log,
ymin=1,
ymax=100,
yminorticks=true,
ylabel style={font=\color{white!15!black},at={(axis description cs:0.07,0.5)}},
ylabel={$\text{\# supp }\omega{}^{k}$},
axis background/.style={fill=white},
legend style={at={(0.03,0.97)}, anchor=north west, legend cell align=left, align=left, draw=white!15!black}
]
\addplot [color=mycolor1, line width=2pt]
  table[row sep=crcr]{%
1	3\\
2	4\\
3	5\\
4	6\\
5	7\\
6	8\\
7	9.00000000000001\\
8	10\\
10	12\\
12	14\\
13	14\\
17	18\\
18	19\\
19	19\\
21	21\\
35	21\\
36	22\\
50	22\\
51	23\\
20001	23\\
};
\addlegendentry{$h_5$}

\addplot [color=mycolor2,line width=2pt]
  table[row sep=crcr]{%
1	3\\
2	4\\
3	5\\
4	6\\
5	7\\
6	8\\
7	9.00000000000001\\
8	10\\
10	12\\
12	14\\
15	17\\
19	21\\
20	22\\
21	22\\
25	26\\
26	26\\
27	27\\
28	27\\
29	28\\
37	28\\
38	29\\
125	29\\
126	30\\
177	30\\
178	31\\
209	31\\
210	32\\
20001	32\\
};
\addlegendentry{$h_{6}$}

\addplot [color=mycolor3, line width=2pt]
  table[row sep=crcr]{%
1	3\\
2	4\\
3	5\\
4	6\\
5	7\\
6	8\\
7	9.00000000000001\\
8	10\\
10	12\\
12	14\\
15	17\\
19	21\\
24	26\\
28	30\\
46	30\\
47.0000000000001	31\\
48	31\\
49	32\\
51	32\\
52	33\\
80	33\\
81	34\\
93.9999999999999	34\\
96.0000000000001	36\\
171	36\\
172	37\\
247	37\\
248	38\\
277	38\\
278	39\\
373	39\\
374	40\\
20001	40\\
};
\addlegendentry{$h_{7}$}

\addplot [color=mycolor4, line width=2pt]
  table[row sep=crcr]{%
1	3\\
2	4\\
3	5\\
4	6\\
5	7\\
6	8\\
7	9.00000000000001\\
8	10\\
10	12\\
12	14\\
15	17\\
19	21\\
24	26\\
25	26\\
26	27\\
27	27\\
28	28\\
49	28\\
52	31\\
59.0000000000001	31\\
60	32\\
61	32\\
62.0000000000001	33\\
68	33\\
70	35\\
71	35\\
72.0000000000001	36\\
85	36\\
85.9999999999999	37\\
100	37\\
101	38\\
121	38\\
122	39\\
135	39\\
136	40\\
209	40\\
210	41\\
211	41\\
212	42\\
217	42\\
218	43.0000000000001\\
299	43.0000000000001\\
301	45\\
20001	45\\
};
\addlegendentry{$h_{8}$}

\addplot [color=mycolor5, line width=2pt]
  table[row sep=crcr]{%
1	3\\
2	4\\
3	5\\
4	6\\
5	7\\
6	8\\
7	9.00000000000001\\
8	10\\
10	12\\
12	14\\
15	17\\
19	21\\
24	26\\
25	27\\
27	27\\
28	28\\
30	28\\
31	29\\
33	29\\
37	33\\
61	33\\
62.0000000000001	34\\
63.0000000000001	34\\
64.0000000000001	35\\
72.0000000000001	35\\
73	36\\
74	36\\
76	38\\
77.0000000000001	38\\
78.0000000000001	39\\
80	39\\
81	40\\
85.9999999999999	40\\
88.0000000000001	42\\
89.0000000000001	42\\
90.0000000000001	43.0000000000001\\
102	43.0000000000001\\
103	44\\
120	44\\
121	45\\
132	45\\
134	47.0000000000001\\
135	47.0000000000001\\
136	48\\
148	48\\
149	49\\
150	49\\
151	50\\
161	50\\
162	51\\
165	51\\
166	52\\
167	52\\
168	53\\
236	53\\
237	54.0000000000001\\
257	54.0000000000001\\
258	55\\
261	55\\
262	56\\
270	56\\
271	57\\
830	57\\
832.000000000001	58.0000000000001\\
20001	58.0000000000001\\
};
\addlegendentry{$h_{9}$}

\end{axis}
\end{tikzpicture}
\caption{$\#\supp \om^k$ over $k$ for \ref{alg:GCG}.}\label{fig:independencesupp}
\end{subfigure}
\begin{subfigure}[t]{.45\linewidth}
\centering
\scalebox{.8}
{
%
%
\definecolor{mycolor1}{rgb}{0.00000,0.44700,0.74100}%
\definecolor{mycolor2}{rgb}{0.85000,0.32500,0.09800}%
\definecolor{mycolor3}{rgb}{0.92900,0.69400,0.12500}%
\definecolor{mycolor4}{rgb}{0.49400,0.18400,0.55600}%
\definecolor{mycolor5}{rgb}{0.46600,0.67400,0.18800}%
\begin{tikzpicture}

\begin{axis}[%
width=2.2in,
height=1.8in,
scale only axis,
xmode=log,
xmin=1,
xmax=100,
xminorticks=true,
ylabel style={font=\color{white!15!black},at={(axis description cs:0.07,0.5)}},
xlabel={iteration counter $k$},
ymode=log,
ymin=3,
ymax=100,
yminorticks=true,
ylabel style={font=\color{white!15!black}},
ylabel={$\text{\# supp }\omega{}^{k}$},
axis background/.style={fill=white},
legend style={at={(0.03,0.97)}, anchor=north west, legend cell align=left, align=left, draw=white!15!black}
]
\addplot [color=mycolor1, line width=2pt]
  table[row sep=crcr]{%
1	3\\
2	4\\
4	4\\
5	5\\
11	5\\
};
\addlegendentry{${h}_{5}$}

\addplot [color=mycolor2, line width=2pt]
  table[row sep=crcr]{%
1	3\\
2	4\\
4	4\\
5	5\\
13	5\\
};
\addlegendentry{${h}_{6}$}

\addplot [color=mycolor3, line width=2pt]
  table[row sep=crcr]{%
1	3\\
2	4\\
4	4\\
5	5\\
13	5\\
};
\addlegendentry{${h}_{7}$}

\addplot [color=mycolor4, line width=2pt]
  table[row sep=crcr]{%
1	3\\
2	4\\
4	4\\
5	5\\
17	5\\
};
\addlegendentry{${h}_{8}$}

\addplot [color=mycolor5,line width=2pt]
  table[row sep=crcr]{%
1	3\\
2	4\\
4	4\\
5	5\\
17	5\\
};
\addlegendentry{$h_{9}$}

\end{axis}
\end{tikzpicture}
\caption{$\#\supp \om^k$ over $k$ for \ref{alg:PDAP}.}
\end{subfigure}
\caption{Evolution of the support size on different refinement levels.}
\label{fig:independencesupport}
\end{figure}
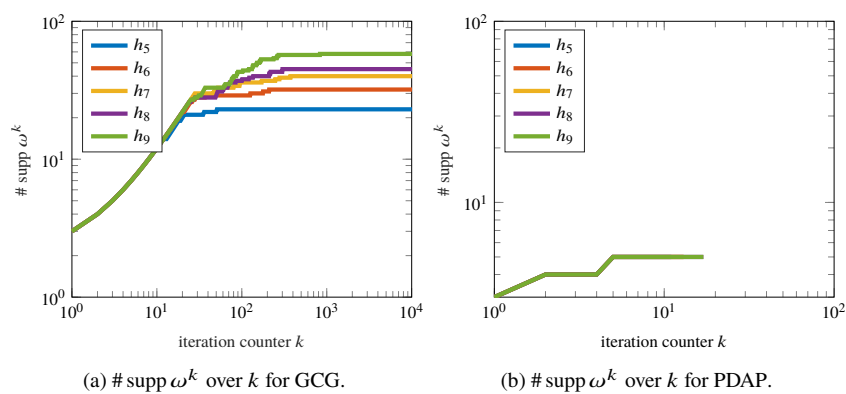

\bibliographystyle{siam2}
\bibliography{SparseDesign3}

\begin{thebibliography}{10}

\bibitem{Adamssobolev}
{\sc R.~A. Adams}, {\em Sobolev spaces}, Pure and applied mathematics, Academic
  Press, New York, 1978.

\bibitem{alexanderian2014optimal}
{\sc A.~Alexanderian, N.~Petra, G.~Stadler, and O.~Ghattas}, {\em A-optimal
  design of experiments for infinite-dimensional {B}ayesian linear inverse
  problems with regularized {$\ell_0$}-sparsification}, SIAM J. Sci. Comput.,
  36 (2014), pp.~A2122--A2148.

\bibitem{atkinson2007optimum}
{\sc A.~C. Atkinson, A.~N. Donev, and R.~D. Tobias}, {\em Optimum experimental
  designs, with {SAS}}, vol.~34 of Oxford Statistical Science Series, Oxford
  University Press, Oxford, 2007.

\bibitem{atwood1973sequences}
{\sc C.~L. Atwood}, {\em Sequences converging to {$D$}-optimal designs of
  experiments}, Ann. Statist., 1 (1973), pp.~342--352.

\bibitem{avery2012experimental}
{\sc M.~Avery, H.~T. Banks, K.~Basu, Y.~Cheng, E.~Eager, S.~Khasawinah,
  L.~Potter, and K.~L. Rehm}, {\em Experimental design and inverse problems in
  plant biological modeling}, J. Inverse Ill-Posed Probl., 20 (2012),
  pp.~169--191.

\bibitem{banks2014experimental}
{\sc H.~T. Banks and K.~L. Rehm}, {\em Experimental design for vector output
  systems}, Inverse Probl. Sci. Eng., 22 (2014), pp.~557--590.

\bibitem{bates2007nonlinear}
{\sc D.~M. Bates and D.~G. Watts}, {\em Nonlinear regression analysis and its
  applications}, Wiley Series in Probability and Mathematical Statistics:
  Applied Probability and Statistics, John Wiley \& Sons, Inc., New York, 1988.

\bibitem{bauer2000numerical}
{\sc I.~Bauer, H.~G. Bock, S.~K\"orkel, and J.~P. Schl{\"o}der}, {\em Numerical
  methods for optimum experimental design in {DAE} systems}, J. Comput. Appl.
  Math., 120 (2000), pp.~1--25.
\newblock SQP-based direct discretization methods for practical optimal control
  problems.

\bibitem{beal1960}
{\sc E.~M.~L. Beale}, {\em Confidence regions in non-linear estimation}, J.
  Roy. Statist. Soc. Ser. B, 22 (1960), pp.~41--88.

\bibitem{becker2005parameter}
{\sc R.~Becker, M.~Braack, and B.~Vexler}, {\em Parameter identification for
  chemical models in combustion problems}, Appl. Numer. Math., 54 (2005),
  pp.~519--536.

\bibitem{bock1987randwertproblemmethoden}
{\sc H.~G. Bock}, {\em Randwertproblemmethoden zur {P}arameteridentifizierung
  in {S}ystemen nichtlinearer {D}ifferentialgleichungen}, vol.~183 of Bonner
  Mathematische Schriften [Bonn Mathematical Publications], Universit\"at Bonn,
  Mathematisches Institut, Bonn, 1987.
\newblock Dissertation, Rheinische Friedrich-Wilhelms-Universit\"at, Bonn,
  1985.

\bibitem{bonnans2000perturbation}
{\sc J.~F. Bonnans and A.~Shapiro}, {\em Perturbation analysis of optimization
  problems}, Springer Series in Operations Research, Springer-Verlag, New York,
  2000.

\bibitem{boyd2015alternating}
{\sc N.~Boyd, G.~Schiebinger, and B.~Recht}, {\em The alternating descent
  conditional gradient method for sparse inverse problems}, SIAM J. Optim., 27
  (2017), pp.~616--639.

\bibitem{bredies2009generalized}
{\sc K.~Bredies, D.~A. Lorenz, and P.~Maass}, {\em A generalized conditional
  gradient method and its connection to an iterative shrinkage method}, Comput.
  Optim. Appl., 42 (2009), pp.~173--193.

\bibitem{bredies2013inverse}
{\sc K.~Bredies and H.~K. Pikkarainen}, {\em Inverse problems in spaces of
  measures}, ESAIM Control Optim. Calc. Var., 19 (2013), pp.~190--218.

\bibitem{Brezis:2010}
{\sc H.~Brezis}, {\em Functional analysis, {S}obolev spaces and partial
  differential equations}, Universitext, Springer, New York, 2011.

\bibitem{casas2012}
{\sc E.~Casas, C.~Clason, and K.~Kunisch}, {\em Approximation of elliptic
  control problems in measure spaces with sparse solutions}, SIAM J. Control
  Optim., 50 (2012), pp.~1735--1752.

\bibitem{chung2012experimental}
{\sc M.~Chung and E.~Haber}, {\em Experimental design for biological systems},
  SIAM J. Control Optim., 50 (2012), pp.~471--489.

\bibitem{dunn1979rates}
{\sc J.~C. Dunn}, {\em Rates of convergence for conditional gradient algorithms
  near singular and nonsingular extremals}, SIAM J. Control Optim., 17 (1979),
  pp.~187--211.

\bibitem{dunn1980convergence}
\leavevmode\vrule height 2pt depth -1.6pt width 23pt, {\em Convergence rates
  for conditional gradient sequences generated by implicit step length rules},
  SIAM J. Control Optim., 18 (1980), pp.~473--487.

\bibitem{elstrodt2013mass}
{\sc J.~Elstrodt}, {\em Ma{\ss}- und Integrationstheorie}, Springer-Lehrbuch,
  Springer Berlin Heidelberg, 2013.

\bibitem{fedorov1972theory}
{\sc V.~V. Fedorov}, {\em Theory of optimal experiments}, Academic Press, New
  York-London, 1972.
\newblock Translated from the Russian and edited by W. J. Studden and E. M.
  Klimko, Probability and Mathematical Statistics, No. 12.

\bibitem{fedorov2012model}
{\sc V.~V. Fedorov and P.~Hackl}, {\em Model-oriented design of experiments},
  vol.~125 of Lecture Notes in Statistics, Springer-Verlag, New York, 1997.

\bibitem{fedorov2013optimal}
{\sc V.~V. Fedorov and S.~L. Leonov}, {\em Optimal design for nonlinear
  response models}, Chapman \& Hall/CRC Biostatistics Series, CRC Press, Boca
  Raton, FL, 2014.

\bibitem{frank1956algorithm}
{\sc M.~Frank and P.~Wolfe}, {\em An algorithm for quadratic programming},
  Naval Res. Logist. Quart., 3 (1956), pp.~95--110.

\bibitem{haber2008numerical}
{\sc E.~Haber, L.~Horesh, and L.~Tenorio}, {\em Numerical methods for
  experimental design of large-scale linear ill-posed inverse problems},
  Inverse Problems, 24 (2008), pp.~055012, 17.

\bibitem{herzog2015sequentially}
{\sc R.~Herzog and I.~Riedel}, {\em Sequentially optimal sensor placement in
  thermoelastic models for real time applications}, Optim. Eng., 16 (2015),
  pp.~737--766.

\bibitem{jaggi2013revisiting}
{\sc M.~Jaggi}, {\em Revisiting frank-wolfe: Projection-free sparse convex
  optimization}, in Proceedings of the 30th International Conference on
  International Conference on Machine Learning - Volume 28, ICML'13, JMLR.org,
  2013, pp.~I--427--I--435.

\bibitem{kiefer1974general}
{\sc J.~Kiefer}, {\em General equivalence theory for optimum designs
  (approximate theory)}, Ann. Statist., 2 (1974), pp.~849--879.

\bibitem{kiefer1959optimum}
{\sc J.~Kiefer and J.~Wolfowitz}, {\em Optimum designs in regression problems},
  Ann. Math. Statist., 30 (1959), pp.~271--294.

\bibitem{korkel1999sequential}
{\sc S.~K{\"o}rkel, I.~Bauer, H.~G. Bock, and J.~Schl{\"o}der}, {\em A
  sequential approach for nonlinear optimum experimental design in {DAE}
  systems}, Scientific Computing in Chemical Engineering II, 2 (1999),
  pp.~338--345.

\bibitem{milzarekfilter}
{\sc A.~Milzarek and M.~Ulbrich}, {\em A semismooth {N}ewton method with
  multidimensional filter globalization for {$l_1$}-optimization}, SIAM J.
  Optim., 24 (2014), pp.~298--333.

\bibitem{pazman1986foundations}
{\sc A.~P{\'a}zman}, {\em Foundations of optimum experimental design}, vol.~14
  of Mathematics and its Applications (East European Series), D. Reidel
  Publishing Co., Dordrecht, 1986.
\newblock Translated from the Czech.

\bibitem{walter2017Helmholtz}
{\sc K.~Pieper, B.~Q. Tang, P.~Trautmann, and D.~Walter}, {\em Inverse point
  source location for the {H}elmholtz equation}, submitted.

\bibitem{pronzato2003removing}
{\sc L.~Pronzato}, {\em Removing non-optimal support points in {$D$}-optimum
  design algorithms}, Statist. Probab. Lett., 63 (2003), pp.~223--228.

\bibitem{pukelsheim1993optimal}
{\sc F.~Pukelsheim}, {\em Optimal design of experiments}, Wiley Series in
  Probability and Mathematical Statistics: Probability and Mathematical
  Statistics, John Wiley \& Sons, Inc., New York, 1993.
\newblock A Wiley-Interscience Publication.

\bibitem{rakotomamonjy2015generalized}
{\sc A.~{Rakotomamonjy}, R.~{Flamary}, and N.~{Courty}}, {\em {Generalized
  conditional gradient: analysis of convergence and applications}}, ArXiv
  e-prints,  (2015).

\bibitem{Rudin}
{\sc W.~Rudin}, {\em Real and complex analysis}, McGraw-Hill Book Co., New
  York, third~ed., 1987.

\bibitem{john1975review}
{\sc R.~C. St.~John and N.~R. Draper}, {\em {$D$}-optimality for regression
  designs: a review}, Technometrics, 17 (1975), pp.~15--23.

\bibitem{tarantola2005inverse}
{\sc A.~Tarantola}, {\em Inverse problem theory and methods for model parameter
  estimation}, Society for Industrial and Applied Mathematics (SIAM),
  Philadelphia, PA, 2005.

\bibitem{troeltzsch2010optimale}
{\sc F.~Tr{\"o}ltzsch}, {\em Optimal control of partial differential
  equations}, vol.~112 of Graduate Studies in Mathematics, American
  Mathematical Society, Providence, RI, 2010.
\newblock Theory, methods and applications, Translated from the 2005 German
  original by J\"urgen Sprekels.

\bibitem{ucinski2004optimal}
{\sc D.~Uci\'nski}, {\em Optimal measurement methods for distributed parameter
  system identification}, Systems and Control Series, CRC Press, Boca Raton,
  FL, 2005.

\bibitem{ulbrich2002semismooth}
{\sc M.~Ulbrich}, {\em Semismooth {N}ewton methods for operator equations in
  function spaces}, SIAM J. Optim., 13 (2002), pp.~805--842 (2003).

\bibitem{Wolfe1970away}
{\sc P.~Wolfe}, {\em Convergence theory in nonlinear programming},
  North-Holland, Amsterdam, 1970.

\bibitem{wynn1970}
{\sc H.~P. Wynn}, {\em The sequential generation of {$D$}-optimum experimental
  designs}, Ann. Math. Statist., 41 (1970), pp.~1655--1664.

\bibitem{yu2011cocktail}
{\sc Y.~Yu}, {\em D-optimal designs via a cocktail algorithm}, Stat. Comput.,
  21 (2011), pp.~475--481.

\end{thebibliography}

\end{document}